\documentclass{amsart}

\usepackage{mathrsfs}
\usepackage[all,hyperref]{paper_diening}

\usepackage{tikz}
\usetikzlibrary{calc}

\usepackage{mathtools}

\usepackage{algorithm}
\usepackage{algpseudocode}

\usepackage{graphicx}
\usetikzlibrary{arrows.meta,positioning,shapes.geometric}
\usepackage{adjustbox}

\providecommand{\tria}{\mathcal{T}}

\providecommand{\dx}{\,\mathrm{d}x}
\providecommand{\dr}{\,\mathrm{d}r}
\providecommand{\dt}{\,\mathrm{d}t}

\providecommand{\ds}{\,\mathrm{d}s}
\providecommand{\dz}{\,\mathrm{d}z}
\providecommand{\dy}{\,\mathrm{d}y}

\usepackage[normalem]{ulem}

\usepackage{tikz}
\usepackage{pgfplots}
\usepackage{pgfplotstable} 
\pgfplotsset{
  width=.65\linewidth,
  axis background/.style={fill=black!5!white},
  grid style={densely dotted,semithick},
  legend style={
    legend columns=1,
    legend pos=outer north east
  },
  compat=newest 
}
\pgfplotscreateplotcyclelist{MyColors2}{%
    {red,mark = *,every mark/.append style={solid,scale=0.6,fill=red}},
	{teal,mark = x,every mark/.append style={solid,scale=0.6,fill=teal}},
    {green,mark = diamond*,every mark/.append style={solid,scale=0.6,fill=green}},    
    {blue,mark = square*,every mark/.append style={solid,scale=0.6,fill=blue}},    
    {blue,mark = square*,every mark/.append style={solid,scale=0.6,fill=blue}},    
    {blue,mark = square*,every mark/.append style={solid,scale=0.6,fill=blue}},    
    }
    
\pgfplotscreateplotcyclelist{MyColorsExp2}{%
    {red,solid,mark=*,every mark/.append style={solid,scale=0.6,fill=red}},
    {teal,dashed,mark=square*,every mark/.append style={solid,scale=0.6,fill=teal}},
    {green!60!black,dashdotted,mark=triangle*,every mark/.append style={solid,scale=0.6,fill=green!60!black}},
    {blue,dotted,mark=diamond*,every mark/.append style={solid,scale=0.6,fill=blue}},
    {orange,solid,mark=x,every mark/.append style={solid,scale=0.7}},
    {violet,dashed,mark=+,every mark/.append style={solid,scale=0.7}},
    {brown,dashdotted,mark=pentagon*,every mark/.append style={solid,scale=0.6,fill=brown}},
    {cyan!60!black,dotted,mark=o,every mark/.append style={solid,scale=0.6}},
    {magenta,solid,mark=star,every mark/.append style={solid,scale=0.7}},
    {black,dashed,mark=triangle,every mark/.append style={solid,scale=0.7}}
}

\pgfplotscreateplotcyclelist{MyColors}{%
    {red,mark = *,every mark/.append style={solid,scale=0.6,fill=red}},
	{teal,mark = x,every mark/.append style={solid,scale=0.6,fill=teal}},
    {blue,mark = square*,every mark/.append style={solid,scale=0.6,fill=blue}},    
    {dotted,red,mark = *,every mark/.append style={solid,scale=0.6,fill=red}},
	{dotted,teal,mark = x,every mark/.append style={solid,scale=0.6,fill=teal}},
    {dotted,blue,mark = square*,every mark/.append style={solid,scale=0.6,fill=blue}},
    }

\makeatletter
\@namedef{subjclassname@2020}{%
  \textup{2020} Mathematics Subject Classification}
\makeatother

\begin{document}

\author[J.~Storn]{Johannes Storn}
\address[J.~Storn]{Faculty of Mathematics \& Computer Science, Institute of Mathematics, Leipzig University, Augustusplatz 10, 04109 Leipzig, Germany}
\email{johannes.storn@uni-leipzig.de}

\keywords{randomized projection operator, Monte Carlo quadrature, load vector assembly, rough right-hand side, higher order quadrature, least-squares polynomial approximation}
\subjclass[2020]{
	65D32,  
	65D30,
	65N30, 
}

\thanks{The work of the author was supported by the Deutsche Forschungsgemeinschaft (DFG, German Research Foundation) -- SFB 1283/2 2021 -- 317210226.}

\title[Singular Zienkiewicz Tetrahedron]{The singular Zienkiewicz tetrahedron: Definition and Integration}

\begin{abstract}
We extend the two-dimensional singular Zienkiewicz element to three dimensions, leading to a novel $H^2$-conforming finite element on tetrahedral meshes based on rational shape functions. Besides the finite element construction and its conformity analysis, we develop an exact iterative integration procedure for the associated class of rational functions. The resulting formulae allow for the exact integration of the basis functions, their derivatives, and the products occurring in finite element assembly.
\end{abstract}

\maketitle

\section{Introduction}\label{sec:Intro}

Conforming finite element methods for fourth-order problems require globally $H^2$-conforming approximation spaces. On simplicial meshes, this entails continuity of the function and of its first derivatives across element interfaces and therefore leads to considerably stronger constraints than standard $H^1$-conformity. Nevertheless, several $C^1$ finite elements are available in two dimensions. Their construction requires either relatively high polynomial degrees, macroelement refinements, or non-(piecewise-)polynomial shape functions, cf.~\cite[Chap.~6]{Ciarlet2002}. In three dimensions, the situation is similar: The \v{Z}en\'{\i}\v{s}ek element \cite{Zenisek73} is a $H^2$-conforming finite element that requires the polynomial space of maximal degree nine.
Lower-degree alternatives are based on macroelement refinements, including the Alfeld split \cite{Alfeld84,FuGuzmanNeilan20} and its refinement by the Worsey--Farin split \cite{WorseyFarin87,GuzmanLischkeNeilan22}, as well as the finer Worsey--Piper split \cite{WorseyPiper88,SchumakerSorokinaWorsey09}. 
However, to the author's knowledge, extensions of the non-polynomial approach to 3D are open -- a gap that is closed by this paper's extension of the singular Zienkiewicz triangle \cite[Thm.~6.1.4]{Ciarlet2002} to three dimensions. 
In particular, we design an $H^2$-conforming finite element that uses rational polynomials as shape functions and provide an exact iterative integration formula for the resulting shape functions and their derivatives.
Our study is motivated from several perspectives.

\begin{enumerate}
\item $H^2$-conforming finite element spaces arise naturally in the conforming discretization of fourth-order problems, cf.~\cite{AinsworthParker24}. A prototypical example is the biharmonic equation. Related applications include higher-order continuum models such as strain-gradient elasticity \cite{PapanicolopulosZervosVardoulakis09} and primal formulations of phase-field problems \cite{ZhaoSchillingerXu17}. Moreover, in three dimensions the Sobolev embedding $H^2(\Omega)\hookrightarrow C^0(\overline\Omega)$ makes point evaluations continuous. This additional regularity is useful for nonlinear Hessian-dependent problems and equations involving pointwise constraints, such as discretizations of the Monge--Amp\`ere equation \cite{FengNeilan14,GallistlTran24,GallistlTran25}.\label{itm:one}
\item Smooth finite element spaces are closely related to exactly divergence-free discretizations of incompressible flow through the Stokes complex
\begin{align*}
\mathbb R\longrightarrow H^2(\Omega)\xrightarrow{\nabla}H^1(\operatorname{curl};\Omega)\xrightarrow{\operatorname{curl}}H^1(\Omega)^3\xrightarrow{\operatorname{div}}L^2(\Omega)\longrightarrow 0.
\end{align*}
In particular, the design and analysis of suitable exactly divergence-free finite elements often uses discrete analogues of this sequence and commuting diagram properties, cf.~\cite{ChenHuang24,FuGuzmanNeilan20,FalkNeilan13,JohnLinkeMerdonNeilanRebholz17,Neilan20}. 
However, unlike in the 2D case, discrete sequences with rational functions that include for example the 3D Guzm\'an--Neilan element \cite{GuzmanNeilan14} are, to the author's knowledge, yet unknown. Our studies provide a building block toward such a construction.
\item There is continued research interest in $C^r$-conforming finite elements, see for example \cite{HuLinWu24}. For polynomial constructions, enforcing higher-order continuity across interfaces typically increases the degree of the trace spaces and thereby requires additional degrees of freedom and further enrichments. This mechanism can lead to high-dimensional local finite element spaces. The idea underlying the singular Zienkiewicz triangle -- enriching a low-order polynomial space by rational functions while keeping the relevant traces in suitably low-dimensional spaces -- provides a way to avoid this cascading increase of polynomial degree and local dimension.\label{itm:three}
\end{enumerate}
Our design of an $H^2$-conforming element can be seen as an extension of the singular Zienkiewicz triangle \cite[Thm.~6.1.4]{Ciarlet2002} to three dimensions. As illustrated in \cite[Sec.~2.1]{DieningStornTscherpel24}, the singular Zienkiewicz triangle adds non-polynomial basis functions with polynomial traces to a polynomial space. This allows for the inclusion of additional degrees of freedom, ensuring global $C^1$-conformity. Inspired by this idea, we introduce in Section~\ref{sec:RatBubbles} a class of rational polynomials which are used in Section~\ref{subbsec:SingZienk}--\ref{subsec:reduced-singular-zienkiewicz-tetrahedron} to introduce a suitable finite element space, including a reduced version with fewer degrees of freedom and an extended version with improved approximation properties.
While this approach yields low-dimensional $H^2$-conforming finite elements, their practical implementation poses an additional challenge: The second derivatives of the rational shape functions are in general only in $L^\infty$, and standard polynomial quadrature rules therefore do not provide accurate results causing additional errors in the resulting finite element method. To address this difficulty, we develop in Section~\ref{sec:3d-rational-moments} an exact integration routine for the rational functions occurring in our construction. Similar to the two-dimensional routine in \cite{DieningStornTscherpel24}, the algorithm is based on a sequence of algebraic reduction identities. In three dimensions, however, the additional denominator configurations require new reduction steps. We first reduce arbitrary denominator graphs to triangular and vertex-star configurations. The latter are then treated by admissible star reductions, a descent formula based on homogeneity and the divergence theorem, and finally explicit lower-dimensional integral identities. Exact integration alone however does not yet guarantee an efficient implementation, since a direct three-dimensional extension of the two-dimensional assembly routine leads to a comparatively large geometry-dependent tensor contraction. In Section~\ref{subsec:efficient-stiffness-assembly}, we therefore exploit the affine transformation of the Hessian to separate geometry and reference-element information more efficiently. Since the associated symmetric geometry matrix has only six independent entries, its quadratic contribution can be represented by $21$ geometry coefficients. This reduces the online assembly to geometry-dependent linear combinations of precomputed reference matrices.
We conclude in Section~\ref{subsec:NumExp} with numerical experiments. In particular, we compare the exact integration procedure with numerical quadrature and illustrate the influence of inexact integration on the resulting finite element approximations.
\section{The polynomial Zienkiewicz tetrahedron}\label{sec:poly-zienkiewicz-tetrahedron}
In this section we discuss the (polynomial) Zienkiewicz finite element in $\mathbb{R}^3$, see also \cite{MingXu07}. 
Let $T=[v_0,v_1,v_2,v_3]\subset \mathbb R^3$ be a tetrahedron with vertices $v_0,\dots,v_3$ and associated  barycentric coordinates $\lambda_0,\dots,\lambda_3$. For $i=0,\dots,3$, we denote the face opposite to the vertex $v_i$ by $f_i \coloneqq \lbrace \lambda_i=0\rbrace$.
Let $\nu_i$ denote the outer unit normal on $f_i$ and set
\begin{align}\label{eq:defDii}
  d_{ii}\coloneqq \nabla\lambda_i\cdot\nu_i=-|\nabla\lambda_i|<0.
\end{align}
The polynomial Zienkiewicz tetrahedron is obtained by enriching quadratic polynomials by cubic edge functions. More precisely, let $\mathcal{P}_k(T)$ denote the space of polynomials on $T$ with maximal degree $k \in \mathbb{N}_0$. Then we define the polynomial space
\begin{align*}
Z_3^0(T) \coloneqq \mathcal P_2(T)  \oplus \operatorname{span} \lbrace \lambda_i^2\lambda_j-\lambda_j^2\lambda_i \colon 0\leq i<j\leq 3 \rbrace.
\end{align*}
\begin{remark}[Equivalent characterization]\label{rem:EquiChar}
The definition of $Z_3^0(T)$ is inspired by its 2D analog in \cite{DieningStornTscherpel24}. It can be equivalently characterized as in \cite{Ciarlet2002,MingXu07} as the space of all cubic functions $p \in \mathcal{P}_3(T)$ that satisfy for all $i=0,\dots,3$ with $J_i\coloneqq \lbrace 0,1,2,3\rbrace\setminus\lbrace i\rbrace$ and barycenters $\operatorname{mid}(f_i)$ of the faces $f_i$
\begin{align*}
0 = 6 p(\operatorname{mid}(f_i)) - 2 \sum_{j\in J_i} p(v_j) + \sum_{j\in  J_i} \nabla p(v_j) \cdot \big(v_j - \operatorname{mid}(f_i)\big).
\end{align*} 
Indeed, a direct calculation shows that any basis function in the space $Z_3^0(T)$ defined above satisfies this constraint. Equality then follows by comparing the dimension
\begin{align*}
\dim Z_3^0(T) =\dim \mathcal{P}_2(T) + 6 = 16 = \dim \mathcal{P}_3(T) - 4.
\end{align*} 
\end{remark}
The degrees of freedom in the Zienkiewicz space are the vertex Hermite data
\begin{align}\label{eq:DOFShermite}
z(v_i)\qquad\text{and}\qquad \nabla z(v_i)\qquad \text{for } i=0,\dots,3.
\end{align}
These are $4(1+3)=16$ degrees of freedom, agreeing with the dimension of $Z_3^0(T)$. The following lemma shows that $Z_3^0(T)$ together with these degrees of freedom defines a finite element in the sense of Ciarlet, cf.~\cite[Chap.~2.2.3]{Ciarlet2002}.
\begin{lemma}[Unisolvence of the polynomial Zienkiewicz tetrahedron]\label{lem:unisolvence}
The degrees of freedom in \eqref{eq:DOFShermite} are unisolvent for $Z_3^0(T)$.
\end{lemma}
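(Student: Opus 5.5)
Since $\dim Z_3^0(T)=16$ equals the number of degrees of freedom in \eqref{eq:DOFShermite}, it suffices to show injectivity: if $z\in Z_3^0(T)$ satisfies $z(v_i)=0$ and $\nabla z(v_i)=0$ for all $i=0,\dots,3$, then $z=0$. The argument reduces to one-dimensional Hermite interpolation along edges and then to the explicit structure of $Z_3^0(T)$.

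\emph{Step 1: edges.} Every element of $Z_3^0(T)$ is a cubic polynomial, so the restriction of $z$ to an edge $[v_i,v_j]$ is a univariate cubic. Vanishing of $z$ at both endpoints removes two conditions. Vanishing of $\nabla z$ at both endpoints implies that the tangential derivative along the edge vanishes there, which removes two more. Hence $z|_{[v_i,v_j]}=0$ for all six edges.

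\emph{Step 2: coefficient structure.} Write $z$ in the cubic Bernstein basis in barycentric coordinates. The coefficients of $\lambda_i^3$ and $\lambda_i^2\lambda_j$ ($i\neq j$) are determined by the edge restrictions, so by Step 1 they all vanish. Thus $z=\sum_{i<j<k} c_{ijk}\lambda_i\lambda_j\lambda_k$ is a combination of the four face bubbles.

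It remains to show that no nontrivial such combination lies in $Z_3^0(T)$. Express a general element as $q+\sum_{i<j}a_{ij}(\lambda_i^2\lambda_j-\lambda_j^2\lambda_i)$ with $q\in\mathcal P_2(T)$, and homogenize $q$ to a cubic using $\sum_l\lambda_l=1$. Then compare the coefficients of the monomials $\lambda_i^3$, $\lambda_i^2\lambda_j$ and $\lambda_i\lambda_j\lambda_k$ in the cubic Bernstein basis.

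\emph{Main obstacle.} The cleanest route for this last step is the characterization in Remark~\ref{rem:EquiChar}, which may be assumed. Apply its face constraint on $f_l$ to $z=\sum c_{ijk}\lambda_i\lambda_j\lambda_k$:
\begin{itemize}
\item all vertex values and gradients vanish, so only the term $6z(\operatorname{mid}(f_l))$ survives;
\item at $\operatorname{mid}(f_l)$ the only nonzero bubble is the one of the face $f_l$, namely $\lambda_i\lambda_j\lambda_k$ with $\{i,j,k\}=J_l$, whose value there is $1/27$.
\end{itemize}
The constraint therefore reads $6c_{J_l}/27=0$, so $c_{J_l}=0$. Running over the four faces gives $z=0$.

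If one prefers to avoid the Remark, the same conclusion follows from the coefficient comparison. Matching the $\lambda_i^2\lambda_j$ coefficients (which must vanish) ties $a_{ij}$ to the entries of $q$. The $\lambda_i\lambda_j\lambda_k$ coefficients of a homogenized quadratic are sums of its edge and vertex data, and together these force every $c_{ijk}=0$.
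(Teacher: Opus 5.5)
Your proof is correct, but after the shared first step it takes a different route from the paper. Both arguments use one-dimensional cubic Hermite interpolation to get $z|_e=0$ on all six edges.

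The paper then works directly with the defining representation $z=p+\sum_{i<j}c_{ij}(\lambda_i^2\lambda_j-\lambda_j^2\lambda_i)$. On $e_{ij}$ only $p|_{e_{ij}}$ and the $c_{ij}$-term survive, and the latter has a nonzero cubic part, so $c_{ij}=0$ and $p|_{e_{ij}}=0$. A quadratic vanishing on all edges is zero. Your fallback ``coefficient comparison'' is essentially this argument written in Bernstein coefficients.

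Your main route instead reduces $z$ via the Bernstein basis to the span of the four face bubbles. It then removes these with the facet functionals of Remark~\ref{rem:EquiChar}, using that $6\,(\lambda_i\lambda_j\lambda_k)(\operatorname{mid}(f_l))=6/27\neq 0$ for $\{i,j,k\}=J_l$. You only need the inclusion half of that remark: the functional is exact on $\mathcal P_2(T)$ and annihilates each $\lambda_i^2\lambda_j-\lambda_j^2\lambda_i$. This is a generator-by-generator calculation that the paper asserts but does not display, and it is independent of any dimension count, so there is no circularity.

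The paper's route is self-contained and gives directness of the sum defining $Z_3^0(T)$ as a by-product. Applied to the zero function, it forces all coefficients $p$ and $c_{ij}$ to vanish, hence $\dim Z_3^0(T)=16$. You take this dimension as an external input; your injectivity argument alone only yields $\dim Z_3^0(T)\le 16$.

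Your route gives more structural information. It shows that vertex Hermite data leave exactly the four face bubbles undetermined in $\mathcal P_3(T)$, and that the facet functionals are dual to them. This is precisely the mechanism the paper later uses for the cubic singular element in Lemma~\ref{lem:unisolvence-cubic}. It also shows unisolvence on the whole kernel of the facet functionals in $\mathcal P_3(T)$, and hence, together with the dimension count, the equality claimed in Remark~\ref{rem:EquiChar}.
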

\begin{proof}
Suppose $z\in Z_3^0(T)$ vanishes in all degrees of freedom; that is,
\begin{align*}
  z(v_i)=0\qquad\text{and}\qquad \nabla z(v_i)=0\qquad  \text{for all }i=0,\dots,3.
\end{align*}
By definition, there exists a $p \in \mathcal P_2(T)$ and coefficients $(c_{ij})_{0\leq i < j \leq 3} \subset \mathbb{R}$ such that 
\begin{align*}
z = p + \sum_{0\leq i<j\leq 3} c_{ij} (\lambda_i^2\lambda_j-\lambda_j^2\lambda_i).
\end{align*}
We fix an edge $e_{ij}=[v_i,v_j]$ with $0\leq i<j\leq 3$. On this edge all barycentric coordinates except $\lambda_i$ and $\lambda_j$ vanish. Hence
\begin{align*}
z|_{e_{ij}} = p|_{e_{ij}} + c_{ij} (\lambda_i^2\lambda_j-\lambda_j^2\lambda_i).
\end{align*}
The restriction $z|_{e_{ij}}$ is a cubic Hermite polynomial on the interval $e_{ij}$ with point values and tangential derivative equal to zero at the endpoints of $e_{ij}$. Consequently, we obtain $z|_{e_{ij}}=0$. Since the polynomial $p|_{e_{ij}}$ has degree at most two, whereas $\lambda_i^2\lambda_j-\lambda_j^2\lambda_i$ has a nonzero cubic part on $e_{ij}$, also the components of $z|_{e_{ij}}$ must vanish; that is, $c_{ij}=0$ and $p|_{e_{ij}}=0$. Since this holds for all six edges, all coefficients $c_{ij}$ vanish. Moreover, the quadratic polynomial $p$ vanishes on the boundary of $T$ and therefore $p=0$. This proves $z=0$.
\end{proof}

In order to investigate global continuity properties, we focus on the traces of $Z_3^0(T)$ on a fixed facet $f_i$ with $i\in \lbrace0,\dots,3\rbrace$. Define the index set $J_i\coloneqq \lbrace 0,1,2,3\rbrace\setminus\lbrace i\rbrace$. We use the restrictions of the barycentric coordinates $\mu_j \coloneqq \lambda_j|_{f_i}$ with $j\in J_i$ as barycentric coordinates on $f_i$.
\begin{lemma}[Facet trace and normal trace]\label{lem:FacetTracePoly}
For every $i=0,\dots,3$, the scalar trace of $Z_3^0(T)$ on $f_i$ is the two-dimensional polynomial Zienkiewicz space
\begin{align*}
  Z_3^0(T)|_{f_i} = Z_2^0(f_i) \coloneqq \mathcal P_2(f_i) + \operatorname{span} \lbrace  \mu_j^2\mu_k-\mu_k^2\mu_j \colon j,k\in J_i,\ j<k \rbrace.
\end{align*}
Moreover, the normal trace space consists of quadratic polynomials; that is,
\begin{align*}
  N_i^0(T) \coloneqq \lbrace (\partial_{\nu_i} z)|_{f_i} \colon z\in Z_3^0(T) \rbrace = \mathcal P_2(f_i).
\end{align*}
\end{lemma}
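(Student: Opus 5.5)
We argue separately for the scalar trace and the normal trace, working directly with the basis of $Z_3^0(T)$ given in the definition. Fix $i$. Since $\lambda_i|_{f_i}=0$ and $\lambda_j|_{f_i}=\mu_j$ for $j\in J_i$, restriction maps $\mathcal P_2(T)$ onto $\mathcal P_2(f_i)$. It also sends each edge function $\lambda_j^2\lambda_k-\lambda_k^2\lambda_j$ either to $\mu_j^2\mu_k-\mu_k^2\mu_j$ (if $j,k\in J_i$) or to zero (if $i\in\{j,k\}$). Hence $Z_3^0(T)|_{f_i}\subseteq Z_2^0(f_i)$, and every generator of $Z_2^0(f_i)$ is attained, which gives equality. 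Surjectivity of $\mathcal P_2(T)\to\mathcal P_2(f_i)$ holds because every quadratic in $\mu_j$ extends by the same formula in $\lambda_j$.

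For the normal trace, we use that $\nabla\lambda_m$ is constant and set $d_{mi}\coloneqq\nabla\lambda_m\cdot\nu_i$, consistent with \eqref{eq:defDii}. The normal derivative of a quadratic is linear, so $\partial_{\nu_i}\mathcal P_2(T)|_{f_i}\subseteq\mathcal P_1(f_i)\subseteq\mathcal P_2(f_i)$. For an edge function we compute
\begin{align*}
\partial_{\nu_i}(\lambda_j^2\lambda_k-\lambda_k^2\lambda_j) = (2\lambda_j\lambda_k-\lambda_k^2)\,d_{ji} + (\lambda_j^2-2\lambda_j\lambda_k)\,d_{ki},
\end{align*}
which is quadratic. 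Its restriction to $f_i$ therefore lies in $\mathcal P_2(f_i)$, and we obtain $N_i^0(T)\subseteq\mathcal P_2(f_i)$.

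The main step is the reverse inclusion. Here we use the edge functions with one index equal to $i$. For $k\in J_i$, restricting to $f_i$ (where $\lambda_i=0$) gives
\begin{align*}
\partial_{\nu_i}(\lambda_i^2\lambda_k-\lambda_k^2\lambda_i)|_{f_i} = -\mu_k^2\, d_{ii}.
\end{align*}
Since $d_{ii}\neq0$, this yields $\mu_k^2\in N_i^0(T)$ for each $k\in J_i$. To obtain the mixed monomials $\mu_j\mu_k$, we use $z=\lambda_i\lambda_j\in\mathcal P_2(T)$, whose normal trace is $d_{ii}\mu_j$. This gives every $\mu_j$, and hence all of $\mathcal P_1(f_i)$. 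Since $\mu_j=\mu_j\sum_{l\in J_i}\mu_l$ on $f_i$, we get
\begin{align*}
\mu_j\mu_k+\mu_j\mu_l = \mu_j-\mu_j^2\in N_i^0(T)\qquad\text{for }\{j,k,l\}=J_i.
\end{align*}
These three relations determine the three products $\mu_j\mu_k$ uniquely: sum them and subtract, exactly as when solving $a+b,\ a+c,\ b+c$ for $a,b,c$. Together with the $\mu_k^2$, this spans all of $\mathcal P_2(f_i)$.

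An alternative for this last step is a dimension count: the six functions $\mu_k^2$ and $\mu_j$ span $\mathcal P_2(f_i)$. Either way, the only delicate point is remembering that $\lambda_i$ vanishes on $f_i$ while its derivative $d_{ii}$ does not.
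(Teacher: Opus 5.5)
Your proof is correct and follows essentially the same route as the paper. For the scalar trace you restrict the defining spanning set, for $N_i^0(T)\subseteq\mathcal P_2(f_i)$ you use the cubic degree bound, and for the reverse inclusion you use the lifts $\lambda_i\cdot(\text{affine})$ together with the edge functions $\lambda_i^2\lambda_k-\lambda_k^2\lambda_i$, whose normal trace is $-d_{ii}\mu_k^2$. Your explicit recovery of the mixed products $\mu_j\mu_k$ is simply a hands-on check of the paper's decomposition $\mathcal P_2(f_i)=\mathcal P_1(f_i)\oplus\operatorname{span}\{\mu_j^2\colon j\in J_i\}$.
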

\begin{proof}
The statement for the scalar trace follows directly from the definition. 
It remains to characterize the normal trace. Let $q\in\mathcal P_1(f_i)$ and choose any affine extension $\widetilde q\in\mathcal P_1(T)$ with $\widetilde q|_{f_i}=q$. Define
\begin{align*}
  z_q \coloneqq \frac{\lambda_i}{d_{ii}}\widetilde q \in\mathcal P_2(T)\subset Z_3^0(T)\qquad\text{with }d_{ii}\text{ defined in \eqref{eq:defDii}}.
\end{align*}
Since $\lambda_i=0$ on $f_i$ and $d_{ii}\coloneqq \nabla\lambda_i\cdot\nu_i$, the product rule reveals
\begin{align*}
  \partial_{\nu_i} z_q|_{f_i} = \frac{1}{d_{ii}}\big((\nabla\lambda_i\cdot\nu_i)\widetilde q +\lambda_i\partial_{\nu_i}\widetilde q\big)|_{f_i} = \widetilde q|_{f_i} = q.
\end{align*}
Hence every affine polynomial on $f_i$ occurs as the normal derivative of a function in $\mathcal P_2(T)\subset Z_3^0(T)$, and therefore $\mathcal P_1(f_i)\subset N_i^0(T)$.
Let now $j\in J_i$ and set
\begin{align*}
  \psi_{ij}\coloneqq \lambda_i^2\lambda_j-\lambda_j^2\lambda_i.
\end{align*}
Since $\lambda_i=0$ on $f_i$, we obtain
\begin{align*}
  \partial_{\nu_i}\psi_{ij}|_{f_i} = -(\nabla\lambda_i\cdot\nu_i)\mu_j^2 = -d_{ii}\mu_j^2.
\end{align*}
Since $d_{ii}\neq0$, this shows $\mu_j^2\in N_i^0(T)$ for all $j\in J_i$. Thus
\begin{align*}
 \mathcal{P}_2(f_i) = \mathcal P_1(f_i) \oplus \operatorname{span}\lbrace \mu_j^2\colon j\in J_i\rbrace \subset N_i^0(T).
\end{align*}
On the other hand, every $z\in Z_3^0(T)$ is a cubic polynomial, and therefore $\partial_{\nu_i}z|_{f_i}$ is a quadratic polynomial on $f_i$. This proves $N_i^0(T)=\mathcal P_2(f_i)$.
\end{proof}
As shown in the following lemma, the global Zienkiewicz element is continuous but not continuously differentiable. In other words, it is $H^1(\Omega)$ but not $H^2(\Omega)$ conforming. 
\begin{lemma}[Continuity]\label{lem:ContPolyZienk}
Let $T^+$ and $T^-$ be two tetrahedra sharing a facet $f_0$ and let $q_+ \in Z_3^0(T_+)$, $q_- \in Z_3^0(T_-)$. If $q_-$ and $q_+$ have the same vertex Hermite data on shared vertices, we have the continuity property $q_-|_{f_0} = q_+|_{f_0}$.  
However, in general the normal derivative with respect to a normal $\nu_0 \in \mathbb{R}^3$ of $f_0$ does not coincide; that is,
\begin{align*}
(\nabla q_- \cdot \nu_0)|_{f_0} \neq (\nabla q_+ \cdot \nu_0)|_{f_0}.
\end{align*}
\end{lemma}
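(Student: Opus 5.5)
The proof has two parts: continuity of the trace, and a counterexample showing the normal derivative can jump. For continuity, Lemma~\ref{lem:FacetTracePoly} gives $q_\pm|_{f_0}\in Z_2^0(f_0)$, with the same $Z_2^0(f_0)$ from both sides because its definition involves only the facet's barycentric coordinates $\mu_1,\mu_2,\mu_3$. The plan is to show that $Z_2^0(f_0)$ is unisolvent with respect to the nine two-dimensional Hermite data on $f_0$: the values $w(v_j)$ and the tangential gradients at the three vertices $v_1,v_2,v_3$ of $f_0$.

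This follows the proof of Lemma~\ref{lem:unisolvence} one dimension lower. Write $w=p+\sum_{j<k} c_{jk}(\mu_j^2\mu_k-\mu_k^2\mu_j)$ with $p\in\mathcal P_2(f_0)$. On each edge of $f_0$, $w$ is a cubic with vanishing Hermite data at both endpoints, hence zero. Comparing cubic parts gives $c_{jk}=0$, so $p$ vanishes on $\partial f_0$ and therefore $p=0$. (Alternatively, the dimension count $6+3=9$ gives the same conclusion.) Since the Hermite data of $q_\pm$ coincide at the shared vertices, in particular their tangential parts do, so $q_-|_{f_0}-q_+|_{f_0}\in Z_2^0(f_0)$ has vanishing data and is therefore zero.

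For the negative statement it suffices to give one example. Take $q_-=0$ on $T^-$. On $T^+$, with $\lambda_0$ the barycentric coordinate vanishing on $f_0$ and $J_0=\{1,2,3\}$, take
\begin{align*}
q_+ \coloneqq \lambda_0^2\lambda_4 \in \mathcal P_2(T^+)\cdot \mathcal P_1(T^+)?
\end{align*}
This does not work directly, so I would instead build $q_+$ from the proof of Lemma~\ref{lem:FacetTracePoly}. The requirement is a $q_+\in Z_3^0(T^+)$ with zero Hermite data at $v_1,v_2,v_3$ whose normal trace on $f_0$ is a nonzero quadratic. Two candidates:
\begin{align*}
q_+=\lambda_0\mu\text{-type functions}\quad\text{or}\quad q_+=\lambda_0\lambda_1\lambda_2.
\end{align*}
The product $\lambda_0\lambda_1\lambda_2$ is cubic but may not lie in $Z_3^0$. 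The cleaner route is to use the Hermite basis: let $q_+$ be the basis function of $Z_3^0(T^+)$ whose only nonzero degree of freedom is $\partial_{\nu_0} q_+(v_0)=1$ at the vertex $v_0$ of $T^+$ not on $f_0$. Its trace on $f_0$ vanishes by the continuity argument above.

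The main obstacle is to show that its normal derivative on $f_0$ is not identically zero. The normal derivative lies in $\mathcal P_2(f_0)$ and vanishes at $v_1,v_2,v_3$, since the full gradients vanish there. If it were zero, $q_+$ would be divisible by $\lambda_0^2$, that is $q_+=\lambda_0^2\ell$ with $\ell$ affine. Membership of $\lambda_0^2\ell$ in $Z_3^0(T^+)$ is then tested with the constraint of Remark~\ref{rem:EquiChar}: for a face $f_i$ with $i\neq 0$, the values and gradients of $\lambda_0^2\ell$ at the vertices and face midpoints are computed to find which $\ell$ are admissible. I expect this forces $\ell=0$, or at least $\ell$ incompatible with $\partial_{\nu_0}q_+(v_0)=1$, giving a contradiction. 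Hence $(\nabla q_+\cdot\nu_0)|_{f_0}\neq 0=(\nabla q_-\cdot\nu_0)|_{f_0}$.
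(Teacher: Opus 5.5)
Your continuity part is the paper's argument: unisolvence of $Z_2^0(f_0)$ for the tangential vertex Hermite data, proved edge by edge as in Lemma~\ref{lem:unisolvence}. You should drop the parenthetical claim that the count $6+3=9$ gives the same conclusion, since matching numbers alone do not give injectivity.

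\textbf{The gap is in the negative part.} The paper writes down an explicit $z^+\in Z_3^0(T^+)$: the quadratic $\lambda_0(1-2\lambda_3)$ plus a signed sum of the three cubic edge functions $\lambda_0^2\lambda_j-\lambda_j^2\lambda_0$. It then computes $\partial_{\nu_0}z^+|_{f_0}=2d_{00}\lambda_1\lambda_2\neq0$ directly. You instead take the Hermite basis function with $\nabla q_+(v_0)=\nu_0$ and all other data zero. You correctly reduce the claim to excluding $q_+=\lambda_0^2\ell$ with $\ell$ affine, but then you stop at ``I expect this forces $\ell=0$, or at least $\ell$ incompatible with $\partial_{\nu_0}q_+(v_0)=1$''. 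That step is the entire content of the counterexample, and it is left unproved. Moreover, your first alternative is false: $\lambda_0^2\in\mathcal P_2(T^+)\subset Z_3^0(T^+)$, so membership in $Z_3^0(T^+)$ can never exclude constant $\ell$. The vertex value $q_+(v_0)=0$ has to enter the argument.

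Here is how to close it. The face functional $\mathcal C_i$ from \eqref{eq:CubicFaceDof} depends only on the trace on $f_i$ and vanishes on $Z_3^0(T^+)$ by Remark~\ref{rem:EquiChar}. Let $\mu_a,\mu_b$ be two distinct face coordinates of $f_i$. A direct evaluation gives $\mathcal C_i=\tfrac29$ on functions with trace $\mu_a^3$ and $\mathcal C_i=-\tfrac19$ on functions with trace $\mu_a^2\mu_b$. Write $\ell=\sum_{k=0}^3\ell_k\lambda_k$, and for $i\in\{1,2,3\}$ let $\{j,k\}=\{1,2,3\}\setminus\{i\}$. The trace of $\lambda_0^2\ell$ on $f_i$ is $\mu_0^2(\ell_0\mu_0+\ell_j\mu_j+\ell_k\mu_k)$, hence $0=\mathcal C_i(\lambda_0^2\ell)=\tfrac29\bigl(\ell_0-\tfrac12(\ell_j+\ell_k)\bigr)$. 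These three equations force $\ell_1=\ell_2=\ell_3=\ell_0$, i.e.\ $\ell$ is constant. Then $0=q_+(v_0)=\ell$ contradicts $\nabla q_+(v_0)=\nu_0$.

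With this addition your route is correct, and it shows slightly more than the paper's example. The functions in $Z_3^0(T^+)$ with zero Hermite data at $v_1,v_2,v_3$ form a four-dimensional space, and only the multiples of $\lambda_0^2$ in it have vanishing normal trace on $f_0$. So every other element produces a jump. The paper's explicit function, by contrast, proves the lemma with a two-line computation and needs neither Remark~\ref{rem:EquiChar} nor the basis function. Finally, remove the abandoned attempts from the write-up: $\lambda_4$ does not exist, and $\lambda_0\lambda_1\lambda_2\notin Z_3^0(T^+)$ because its trace on $f_3$ is the face bubble.
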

\begin{proof}
Let $f_0=[v_1,v_2,v_3]$ be the common facet. By the trace characterization of the polynomial Zienkiewicz tetrahedron, the restriction of $Z_3^0(T^\pm)$ to $f_0$ is the two-dimensional polynomial $Z^0_2(f_0)$, which is uniquely characterized by the restrictions of the degrees of freedom onto $f_0$, cf.~the proof of Lemma~\ref{lem:unisolvence}.
It remains to show that the normal derivatives might differ. We give an explicit example. Let $\lambda_0,\lambda_1,\lambda_2,\lambda_3$ denote the barycentric coordinates on $T^+$ with face $f_0=\lbrace \lambda_0=0\rbrace$. Set $d_{00}\coloneqq \nabla\lambda_0\cdot\nu_0 \neq 0$ and define
\begin{align*}
  z^+ \coloneqq \lambda_0(1-2\lambda_3) 
  + (\lambda_0^2\lambda_1-\lambda_1^2\lambda_0)
  + (\lambda_0^2\lambda_2-\lambda_2^2\lambda_0)
  - (\lambda_0^2\lambda_3-\lambda_3^2\lambda_0)\in Z_3^0(T^+).
\end{align*}
Since every term contains the factor $\lambda_0$, we have $z^+|_{f_0}=0$. In particular, all tangential derivatives of $z^+$ vanish on $f_0$. Moreover, writing $z^+=\lambda_0 g$, we have
\begin{align*}
  g = 1-2\lambda_3-\lambda_1^2-\lambda_2^2+\lambda_3^2\qquad\text{on }f_0.
\end{align*}
At the vertices $v_1,v_2,v_3$ this function vanishes. Hence, $\nabla z^+=g\nabla\lambda_0+\lambda_0\nabla g$ also vanishes at the vertices of $f_0$. Thus $z^+$ has zero vertex Hermite data on the shared facet $f_0$.
If we choose $q_-\coloneqq 0$ on $T^-$ and $q_+\coloneqq z^+$ on $T^+$, then $q_-$ and $q_+$ have identical vertex Hermite data on the shared vertices, but $\nabla q_-\cdot\nu_0|_{f_0}=0$, whereas
\begin{align*}
  (\nabla q_+\cdot\nu_0)|_{f_0} = (g \nabla \lambda_0 \cdot \nu_0)|_{f_0} = d_{00}\big(1-2\lambda_3-\lambda_1^2-\lambda_2^2+\lambda_3^2\big)|_{f_0} =2d_{00}(\lambda_1\lambda_2)|_{f_0}.
\end{align*}
This proves that the normal derivative is not determined by the shared vertex Hermite data.
\end{proof}
The non-conformity in $H^2(\Omega)$ causes severe difficulties in solving the biharmonic problem. In particular, the corresponding non-conforming method that replaces the Laplacian with the piecewise Laplacian diverges for certain meshes \cite[Thm.~4.6]{MingXu07}. Convergent alternatives, such as the TQC16 element \cite{MingXu07}, use the space $Z_3^0(T)$ but modify the definition of the discrete Laplacian/Hessian.
An alternative idea is to add new basis functions and thus allow for more degrees of freedom, which finally leads to $H^2(\Omega)$ conformity. However, if the space $Z^0_3(T)$ is enriched by polynomials, this increases the polynomial degrees of the trace spaces, thus requiring further degrees of freedom leading to more enrichments. This strategy leads to \v{Z}en\'{\i}\v{s}ek's polynomial element \cite{Zenisek73} with 220 local degrees of freedom, see also \cite{HuLinWu24} for generalizations to $C^r$ conforming elements in any dimension. To obtain smaller finite element spaces, one thus has to leave polynomial spaces, as discussed in Section~\ref{sec:Intro}. The alternative idea of the singular Zienkiewicz triangle is the enrichment of the space $Z_2^0(T)$ with rational bubble functions \cite{Ciarlet2002,DieningStornTscherpel24}. In the following we extend this idea to the three-dimensional case.
\section{Enrichment of the polynomial Zienkiewicz space}
The preceding discussion shows that the Zienkiewicz tetrahedron has too few degrees of freedom for $H^2$-conformity. Similar to the 2D case \cite[Chap.~6.6.1]{Ciarlet2002}, we thus suggest in this section the enrichment of the polynomial space by rational bubbles, allowing us to define additional degrees of freedom. Throughout this section, let  $T= [v_0,v_1,v_2,v_3] \subset \mathbb{R}^3$ be a tetrahedron with vertices $v_0,\dots,v_3$ and associated barycentric coordinates $\lambda_0,\dots,\lambda_3$.
\subsection{A class of rational functions}\label{sec:RatBubbles}
This subsection discusses a class of rational functions for enriching the polynomial Zienkiewicz space. Define the vector $\lambda\coloneqq(\lambda_0,\lambda_1,\lambda_2,\lambda_3)$ of barycentric coordinates. Let $\mathcal E_4\coloneqq\lbrace ij\colon 0\leq i<j\leq 3\rbrace \subset \mathbb{N}_0^2$ be the set of unordered pairs of vertices which can be identified with the set of edges. For multi-indices $\alpha\in\mathbb N_0^4$ and $\beta\in\mathbb N_0^{\mathcal E_4}$, we set the rational polynomial
\begin{align}\label{eq:DefRatBubble}
R^\alpha_\beta(\lambda)\coloneqq\frac{\lambda^\alpha}{\prod_{ij\in\mathcal E_4}(\lambda_i+\lambda_j)^{\beta_{ij}}},\qquad\text{where}\qquad  \lambda^\alpha\coloneqq\prod_{i=0}^3\lambda_i^{\alpha_i}.
\end{align}
The following lemma shows that the set of functions $R^\alpha_\beta$ with $\alpha\in \mathbb{N}_0^4$ and $\beta\in\mathbb N_0^{\mathcal E_4}$ is closed under multiplication. 
\begin{lemma}[Multiplication]\label{lem:Multi}
If $\alpha,\sigma\in\mathbb N_0^4$ and $\beta,\tau\in\mathbb N_0^{\mathcal E_4}$, then
\begin{align*}
R^\alpha_\beta R^\sigma_\tau=R^{\alpha+\sigma}_{\beta+\tau}.
\end{align*}
\end{lemma}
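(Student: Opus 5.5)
The identity follows directly from the definition \eqref{eq:DefRatBubble} and the exponent rules for products of real numbers, applied pointwise on the interior of $T$ where all denominators are positive. I would first note where the functions make sense. For $ij\in\mathcal E_4$ the factor $\lambda_i+\lambda_j$ vanishes on $T$ only on the edge opposite to $e_{ij}$. Hence on the interior of $T$, and more generally wherever all $\lambda_i+\lambda_j>0$, every $R^\alpha_\beta$ is a well-defined quotient of nonzero quantities. I understand the identity pointwise on this set, or equivalently as an identity of rational functions in the variables $\lambda$.

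Next I would expand both numerator and denominator. For the numerators, the rule $\lambda_i^{\alpha_i}\lambda_i^{\sigma_i}=\lambda_i^{\alpha_i+\sigma_i}$ and the commutativity of multiplication give
\begin{align*}
\lambda^\alpha\lambda^\sigma=\prod_{i=0}^3\lambda_i^{\alpha_i}\lambda_i^{\sigma_i}=\prod_{i=0}^3\lambda_i^{\alpha_i+\sigma_i}=\lambda^{\alpha+\sigma}.
\end{align*}
Likewise, for the denominators,
\begin{align*}
\prod_{ij\in\mathcal E_4}(\lambda_i+\lambda_j)^{\beta_{ij}}\prod_{ij\in\mathcal E_4}(\lambda_i+\lambda_j)^{\tau_{ij}}=\prod_{ij\in\mathcal E_4}(\lambda_i+\lambda_j)^{\beta_{ij}+\tau_{ij}}.
\end{align*}
Combining the two via $\frac{a}{b}\cdot\frac{c}{d}=\frac{ac}{bd}$ for $b,d\neq0$ yields $R^\alpha_\beta R^\sigma_\tau=R^{\alpha+\sigma}_{\beta+\tau}$.

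There is no real obstacle. The only point needing care is the domain issue above, namely that the denominators may vanish on edges of $T$. If one wants the identity on all of $T$, it extends from the interior by continuity wherever both sides are continuous, or otherwise holds as an identity of rational functions.
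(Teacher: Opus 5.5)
Your proof is correct and takes the same approach as the paper, which simply says the identity follows from the definition \eqref{eq:DefRatBubble}. Your exponent bookkeeping for numerator and denominator, and your remark that $\lambda_i+\lambda_j$ vanishes on $T$ only on the edge opposite to $e_{ij}$, just spell that out in detail.
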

\begin{proof}
The lemma follows by the definition of the rational bubbles in \eqref{eq:DefRatBubble}.
\end{proof}
We can differentiate within the class of rational bubbles. For this, let $\fre_i\in\mathbb N_0^4$ be the $i$-th canonical unit vector and let $\fre_{ij}\in\mathbb N_0^{\mathcal E_4}$ be the unit vector associated with the pair $ij$ in the sense that $(\fre_{ij})_{k\ell}$ equals zero for $ij \neq k\ell$ and one for $ij = k\ell$. 
\begin{lemma}[Differentiation]\label{lem:Differentiatoin}
Let $k=0,\dots,3$, $\alpha \in \mathbb{N}_0^4$, and $\beta \in \mathbb N_0^{\mathcal E_4}$. Differentiation with respect to the barycentric coordinate $\lambda_k$ gives 
\begin{align*}
\partial_{\lambda_k}R^\alpha_\beta=
\begin{cases}
\alpha_k R^{\alpha-\fre_k}_\beta-\displaystyle\sum_{\substack{ij\in\mathcal E_4\\ k\in\lbrace i,j\rbrace}}\beta_{ij}R^\alpha_{\beta+\fre_{ij}}&\text{if }\alpha_k > 0,\\[2mm]
-\displaystyle\sum_{\substack{ij\in\mathcal E_4\\ k\in\lbrace i,j\rbrace}}\beta_{ij}R^\alpha_{\beta+\fre_{ij}}&\text{if }\alpha_k = 0.
\end{cases}
\end{align*}
\end{lemma}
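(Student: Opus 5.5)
Since $R^\alpha_\beta$ is, by \eqref{eq:DefRatBubble}, a product of finitely many factors that each depend on $\lambda$ in an elementary way, I will prove the formula by the product (Leibniz) rule. Throughout, $\lambda_0,\dots,\lambda_3$ are treated as independent variables and $\partial_{\lambda_k}$ is the formal partial derivative on the open set where all $\lambda_i+\lambda_j>0$. Concretely, write
\begin{align*}
R^\alpha_\beta=\prod_{m=0}^3\lambda_m^{\alpha_m}\cdot\prod_{ij\in\mathcal E_4}(\lambda_i+\lambda_j)^{-\beta_{ij}}.
\end{align*}
Differentiating with respect to $\lambda_k$ produces a sum of terms, one for each factor that depends on $\lambda_k$. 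The factor $\lambda_k^{\alpha_k}$ contributes $\alpha_k\lambda_k^{\alpha_k-1}$. The edge factors with $k\in\{i,j\}$ contribute $-\beta_{ij}(\lambda_i+\lambda_j)^{-\beta_{ij}-1}$, since $\partial_{\lambda_k}(\lambda_i+\lambda_j)=1$. All other factors are independent of $\lambda_k$.

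Next I re-express each resulting term as a rational bubble. By Lemma~\ref{lem:Multi}, multiplying by $(\lambda_i+\lambda_j)^{-1}=R^{0}_{\fre_{ij}}$ shifts $\beta$ to $\beta+\fre_{ij}$. Hence the edge contribution is
\begin{align*}
-\beta_{ij}\,R^\alpha_{\beta+\fre_{ij}}.
\end{align*}
If $\alpha_k>0$, the vertex contribution is
\begin{align*}
\alpha_k\lambda^{\alpha-\fre_k}\prod(\lambda_i+\lambda_j)^{-\beta_{ij}}=\alpha_k R^{\alpha-\fre_k}_\beta,
\end{align*}
where $\alpha-\fre_k\in\mathbb N_0^4$ is a valid multi-index. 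If $\alpha_k=0$, the factor $\lambda_k^{0}=1$ is constant, so it contributes nothing. This case must be stated separately only because $\alpha-\fre_k$ would have a negative entry.

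Summing the contributions over the three edges $ij$ containing $k$ gives exactly the two cases of the lemma. Edges with $\beta_{ij}=0$ contribute zero, consistent with the coefficient $\beta_{ij}$ in the formula.

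There is no real obstacle; the only point requiring care is the $\alpha_k=0$ case, which keeps every multi-index in $\mathbb N_0^4$.
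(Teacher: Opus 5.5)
Your proof is correct and follows essentially the same route as the paper. The paper writes $R^\alpha_\beta=\lambda^\alpha D_\beta$, with $D_\beta$ the product of the denominator factors. It differentiates the monomial part and the denominator part separately, noting that only the edge factors with $k\in\lbrace i,j\rbrace$ contribute, and combines the two via the product rule, exactly as your factor-by-factor Leibniz argument does.
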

\begin{proof}
Let $k=0,\dots,3$, $\alpha \in \mathbb{N}_0^4$, and $\beta \in \mathbb N_0^{\mathcal E_4}$. We write $R^\alpha_\beta=\lambda^\alpha D_\beta$ with
\begin{align*}
D_\beta\coloneqq \prod_{ij\in\mathcal E_4}(\lambda_i+\lambda_j)^{-\beta_{ij}}.
\end{align*}
The derivative of the monomial part is
\begin{align*}
\partial_{\lambda_k}\lambda^\alpha= 
\begin{cases}
\alpha_k\lambda^{\alpha-\fre_k}&\text{if }\alpha_k>0,\\
0&\text{if }\alpha_k=0.
\end{cases}
\end{align*}
It remains to differentiate the denominator factor. Only those factors depending on $\lambda_k$ contribute. These are precisely the factors indexed by edges $ij\in\mathcal E_4$ with $k\in\lbrace i,j\rbrace$. Hence, a calculation reveals
\begin{align*}
\partial_{\lambda_k}D_\beta
&= -\sum_{\substack{ij\in\mathcal E_4\\ k\in\lbrace i,j\rbrace}}\beta_{ij}(\lambda_i+\lambda_j)^{-\beta_{ij}-1}\prod_{\substack{\ell m\in\mathcal E_4\\ \ell m\neq ij}}(\lambda_\ell+\lambda_m)^{-\beta_{\ell m}} = -\sum_{\substack{ij\in\mathcal E_4\\ k\in\lbrace i,j\rbrace}}\beta_{ij}D_{\beta+\fre_{ij}}.
\end{align*}
Using the product rule, we obtain
\begin{align*}
\partial_{\lambda_k}R^\alpha_\beta = (\partial_{\lambda_k}\lambda^\alpha)D_\beta+\lambda^\alpha\partial_{\lambda_k}D_\beta.
\end{align*}
If $\alpha_k>0$, the first term equals $\alpha_k R^{\alpha-\fre_k}_\beta$, while if $\alpha_k=0$, it vanishes. The second term equals in both cases
\begin{align*}
\lambda^\alpha\partial_{\lambda_k}D_\beta = -\sum_{\substack{ij\in\mathcal E_4\\ k\in\lbrace i,j\rbrace}}\beta_{ij}R^\alpha_{\beta+\fre_{ij}}.
\end{align*}
This proves the asserted formula.
\end{proof}
\begin{remark}[Cartesian derivatives]
Since the barycentric coordinates are affine functions of the Cartesian variable $x \in \mathbb{R}^d$, the derivatives with respect to barycentric coordinates determine the Cartesian derivatives by the chain rule, cf.~\cite[Eq.~3.3]{DieningStornTscherpel24}. For example, we obtain
\begin{align}\label{eq:GradientBary}
\nabla R^\alpha_\beta = \sum_{k=0}^3 \partial_{\lambda_k}R^\alpha_\beta\nabla\lambda_k
\qquad\text{for all $\alpha \in \mathbb{N}_0^4$ and $\beta \in \mathbb N_0^{\mathcal E_4}$}.
\end{align}
This provides an explicit symbolic procedure for evaluating the derivatives required in finite element implementations.
\end{remark}
A rational function $R^\alpha_\beta$ might fail to be integrable due to possible singularities at boundary subsimplices where at least two barycentric coordinates vanish. To characterize this phenomenon, we define for $S\subset\lbrace0,1,2,3\rbrace$ the numbers
\begin{align*}
A_S(\alpha)\coloneqq \sum_{i\in S}\alpha_i\qquad\text{and}\qquad B_S(\beta)\coloneqq \sum_{\substack{i,j\in S\\ i<j}}\beta_{ij}.
\end{align*}
The quantity $B_S(\beta)$ measures the order of the denominator singularity along the boundary subsimplex
\begin{align*}
F_S\coloneqq \bigcap_{i\in S} \lbrace \lambda_i = 0\rbrace.
\end{align*}
\begin{lemma}[Regularity criterion]\label{lem:Regularity}
Let $m\in\mathbb N_0$, $1\leq p\leq\infty$, $\alpha \in \mathbb{N}_0^4$, and $\beta \in \mathbb N_0^{\mathcal E_4}$. Then we have $R^\alpha_\beta\in W^{m,p}(T)$ if and only if there holds for every $S\subset\lbrace0,1,2,3\rbrace$ with $2\leq |S|\leq 3$ and $B_S(\beta)>0$ that 
\begin{align}\label{eq:CondIntegr}
\begin{aligned}
m-|S|/p & < A_S(\alpha)-B_S(\beta)\qquad\text{if }p<\infty,\\
m & \leq A_S(\alpha)-B_S(\beta)\qquad\text{if }p=\infty.
\end{aligned}
\end{align}
\end{lemma}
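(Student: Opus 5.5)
The plan is to localize $R^\alpha_\beta$ near the singular subsimplices and to read off integrability of each derivative from its homogeneity. Since every $\lambda_i\ge 0$ on $T$, the factor $\lambda_i+\lambda_j$ vanishes only on the edge $F_{\{i,j\}}$ and its endpoints. Hence $R^\alpha_\beta$ is smooth on $\overline T$ away from the closed set where some factor with $\beta_{ij}>0$ vanishes. This set is the union of the edges $F_S$ with $|S|=2$ and $B_S(\beta)>0$, together with their vertices; a vertex is $F_S$ with $|S|=3$. The condition is imposed only for $|S|\in\{2,3\}$ with $B_S(\beta)>0$ for exactly this reason: $|S|=1$ gives a face, where no denominator vanishes, and $|S|=4$ gives the empty set.

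\textbf{Derivatives.} By Lemma~\ref{lem:Differentiatoin} and the chain rule \eqref{eq:GradientBary}, every Cartesian derivative of order $k\le m$ is a finite linear combination of functions $R^{\alpha-\gamma}_{\beta+\delta}$ with $|\gamma|+|\delta|=k$ and $\gamma\le\alpha$.

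\textbf{Localization and sufficiency.} Fix $S$ with $2\le|S|\le 3$ and $B_S(\beta)>0$. Near a point of the relative interior of $F_S$, use local coordinates $(\lambda_i)_{i\in S}$ transversal to $F_S$ together with $|S'|$ tangential coordinates, where $S'$ is the complement of $S$. Every factor $\lambda_i$ or $\lambda_i+\lambda_j$ with $\{i,j\}\not\subset S$ is then bounded above and below away from zero. So each term $R^{\alpha-\gamma}_{\beta+\delta}$ is a smooth nonvanishing factor times a function of $(\lambda_i)_{i\in S}$ that is positively homogeneous of degree
\[
A_S(\alpha-\gamma)-B_S(\beta+\delta)\ \ge\ A_S(\alpha)-B_S(\beta)-k .
\]
Polar coordinates in the $|S|$ transversal variables, with radius $r=\sum_{i\in S}\lambda_i$, reduce the local $L^p$ norm to $\int_0^1 r^{p(a-k)}r^{|S|-1}\,\mathrm dr$ with $a=A_S(\alpha)-B_S(\beta)$. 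The angular factor is bounded on the part of the sphere away from lower-dimensional singular sets, since those sets belong to smaller $S$. This integral is finite iff $k-|S|/p<a$, and for $p=\infty$ the term is bounded iff $k\le a$. Both conditions are monotone in $k$, so the case $k=m$ suffices.

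To cover $T$, treat a neighbourhood of a vertex $F_S$ with $|S|=3$ as a cone over its link, which is a spherical triangle, and apply the edge analysis on the link. Since the vertex carries the same scaling, this gives a nested decomposition: vertex cone, then edge wedges inside it, then the regular region. Finally, $W^{m,p}$ membership follows from the classical derivatives being $L^p$ together with the singular set having codimension $\ge2$, so no distributional part appears. I would justify this by a cutoff argument using $|S|/p>m-a\ge\dots$, or more simply by noting that the $L^1_{\mathrm{loc}}$ derivatives integrate by parts against test functions, because the boundary terms on small tubes vanish.

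\textbf{Necessity (main obstacle).} One must exclude cancellation among the terms of the $m$-th derivative. The plan is to restrict $R^\alpha_\beta$ near $F_S$ to its leading homogeneous part $h(\lambda_S)=c\,\lambda_S^{\alpha_S}/\prod_{ij\subset S}(\lambda_i+\lambda_j)^{\beta_{ij}}$, with a nonzero constant $c$ obtained by freezing the other factors at a point of $F_S$.
\begin{itemize}
\item Because $B_S(\beta)>0$, $h$ is not a polynomial, since the denominators share no factor with the monomial.
\item A homogeneous non-polynomial function of degree $a$ has some $m$-th partial derivative in the transversal directions that is homogeneous of degree $a-m$ and not identically zero. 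Otherwise all $m$-th derivatives would vanish, and $h$ would be a polynomial of degree $<m$.
\item Such a derivative is bounded below on an open cone of directions. Hence its integral behaves exactly like the divergent radial integral above whenever the condition fails.
\end{itemize}
The remaining difficulty is controlling the error between $R^\alpha_\beta$ and its frozen leading part. The difference gains a factor of order $r$ or of the tangential distance, so its $m$-th derivatives are of strictly lower singular order and cannot cancel the leading term on a small enough cone.
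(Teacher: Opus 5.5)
Your proposal is correct and follows essentially the same route as the paper. Both arguments localize at the relative interiors of the $F_S$, use that transverse derivatives of order $m$ are homogeneous of degree $A_S(\alpha)-B_S(\beta)-m$, reduce to the radial integral $\int_0^\varepsilon \rho^{p(A_S(\alpha)-B_S(\beta)-m)+|S|-1}\,\mathrm d\rho$, delegate angular singularities to smaller subsets $S$, and prove necessity through a nonvanishing $m$-th derivative of the non-polynomial homogeneous factor.

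There are two minor differences. First, the paper writes $R^\alpha_\beta$ exactly as $G_S$ times a smooth, strictly positive factor rather than freezing that factor. Second, your removability step for weak derivatives is unnecessary, because all singularities lie on $\partial T$ and $R^\alpha_\beta$ is smooth in the open tetrahedron.
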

\begin{proof}
It suffices to inspect a neighborhood of each point in the relative interior of every boundary subsimplex $F_S$ with $S\subset\lbrace0,1,2,3\rbrace$, $2\leq |S|\leq3$, and $B_S(\beta)>0$.
Since all barycentric coordinates outside $S$ stay bounded away from zero in a sufficiently small neighborhood of the relative interior of $F_S$, the possible singular behavior of $R_\beta^\alpha$ is described by
\begin{align*}
\prod_{i\in S}\lambda_i^{\alpha_i}\prod_{\substack{i,j\in S\\ i<j}}(\lambda_i+\lambda_j)^{-\beta_{ij}}.
\end{align*}
Set $\rho\coloneqq \sum_{i\in S}\lambda_i$. Since $\lambda_i\geq0$ for all $i\in S$, we may write $\lambda_i=\rho\theta_i$ with $\theta_i\geq0$ and $\sum_{i\in S}\theta_i=1$. 
The variable $\rho$ measures, up to equivalence of norms, the distance to $F_S$, whereas $\theta=(\theta_i)_{i\in S}$ describes the direction of approach.
The singular factor equals
\begin{align*}
\prod_{i\in S}\lambda_i^{\alpha_i}\prod_{\substack{i,j\in S\\ i<j}}(\lambda_i+\lambda_j)^{-\beta_{ij}} = \rho^{A_S(\alpha)-B_S(\beta)}
\prod_{i\in S}\theta_i^{\alpha_i}\prod_{\substack{i,j\in S\\ i<j}}(\theta_i+\theta_j)^{-\beta_{ij}}.
\end{align*}
Let $k\coloneqq |S|$ and assume without loss of generality $S = \lbrace 1,\dots,k\rbrace$. We consider first the integrability of the singular factor itself. 
We consider first directions for which $\theta_i\geq c>0$ for all $i\in S$. In this case all factors $\theta_i+\theta_j$ are bounded away from zero, and therefore the angular factor satisfies
\begin{align*}
\prod_{i\in S}\theta_i^{\alpha_i}\prod_{\substack{i,j\in S\\ i<j}}(\theta_i+\theta_j)^{-\beta_{ij}} \eqsim 1.
\end{align*}
Hence, the singular behavior is determined entirely by the radial factor $\rho^{A_S(\alpha)-B_S(\beta)}$.
Moreover, the change of variables $(\rho,\theta_1,\dots,\theta_{k-1}) \mapsto (\lambda_1,\dots,\lambda_k)$ with $\theta_k = 1- \theta_1 - \dots - \theta_{k-1}$, gives, up to a constant factor, the volume element
\begin{align*}
\rho^{k-1}\,\mathrm d\rho\,\mathrm d\theta.
\end{align*}
Consequently, for $p<\infty$, the contribution of such a neighborhood to the $L^p$-norm of the singular factor is finite if and only if
\begin{align*}
\int_0^\varepsilon \rho^{p(A_S(\alpha)-B_S(\beta))+k-1}\,\mathrm d\rho<\infty.
\end{align*}
Since $\int_0^\varepsilon \rho^q\,\mathrm d\rho<\infty$ if and only if $q>-1$, this is equivalent to
\begin{align*}
0 < p(A_S(\alpha)-B_S(\beta))+|S|\quad \text{or, equivalently,}\quad -|S|/p<A_S(\alpha)-B_S(\beta).
\end{align*}
For $p=\infty$, the corresponding condition is
\begin{align*}
0\leq A_S(\alpha)-B_S(\beta).
\end{align*}
It remains to note that the angular factor may itself become singular when $\theta$ approaches the boundary of the simplex of angular variables. Such a singularity can only occur if $\theta_i+\theta_j\to0$ for some pair $i,j\in S$ with $\beta_{ij}>0$. Since all $\theta_i$ are nonnegative, this means $\theta_i,\theta_j\to0$ and therefore corresponds precisely to approaching a singular boundary subsimplex associated with a proper subset $S'\subsetneq S$. These singularities are controlled by imposing the same condition for every subset $S'$ with $B_{S'}(\beta)>0$. Hence the above radial condition, applied to all such subsets, characterizes the local $L^p$-integrability of $R_\beta^\alpha$.

We now consider derivatives.
We choose local affine coordinates $(y,z)$ near the relative interior of $F_S$, where $y=(y_i)_{i\in S}$ with $y_i=\lambda_i$ and $z$ denotes coordinates tangential to $F_S$. Set
\begin{align*}
G_S(y)\coloneqq \prod_{i\in S}y_i^{\alpha_i}\prod_{\substack{i,j\in S\\ i<j}}(y_i+y_j)^{-\beta_{ij}}\qquad\text{for all }y\in\mathbb R_{>0}^S.
\end{align*}
The singular factor $G_S$ is homogeneous of degree $A_S(\alpha)-B_S(\beta)$; that is,
\begin{align*}
G_S(ty)=t^{A_S(\alpha)-B_S(\beta)}G_S(y)\qquad\text{for all }t>0\text{ and }y\in\mathbb R_{>0}^S.
\end{align*}
For a multi-index $\gamma=(\gamma_i)_{i\in S}\in\mathbb N_0^S$, differentiation with $\partial_y^\gamma\coloneqq\prod_{i\in S}\partial_{y_i}^{\gamma_i}$ and $|\gamma|\coloneqq\sum_{i\in S}\gamma_i$ yields
\begin{align*}
\partial_y^\gamma G_S(ty)=t^{A_S(\alpha)-B_S(\beta)-|\gamma|}\partial_y^\gamma G_S(y).
\end{align*}
Hence every transverse derivative of order $|\gamma|$ is homogeneous of degree $A_S(\alpha)-B_S(\beta)-|\gamma|$.
Up to a smooth factor with bounded derivatives, $R_\beta^\alpha$ is given by $G_S(y)$ in these coordinates. By the product rule, a derivative of total order at most $m$ is therefore a sum of terms in which at most $m$ derivatives act on $G_S$. Derivatives with respect to the tangential variables $z$ act only on the smooth factor and do not increase the singularity. Consequently, the strongest possible radial singularity occurs when $m$ transverse derivatives act on $G_S$ and is of order
\begin{align*}
\rho^{A_S(\alpha)-B_S(\beta)-m}.
\end{align*}
Repeating the preceding radial integration argument, all derivatives of order at most $m$ belong to $L^p$ for $p<\infty$ provided
\begin{align*}
p(A_S(\alpha)-B_S(\beta)-m)+|S|>0 \quad \text{or, equivalently,}\quad  m-|S|/p<A_S(\alpha)-B_S(\beta).
\end{align*}
For $p=\infty$, they are bounded provided
\begin{align*}
m\leq A_S(\alpha)-B_S(\beta).
\end{align*}
Since these conditions are imposed for every $S\subset\lbrace 0,1,2,3 \rbrace$ with $2\leq|S|\leq3$ and $B_S(\beta)>0$, they also control the possible singularities arising when the angular variable approaches the boundary of its simplex. This proves the asserted sufficient conditions.

Conversely, suppose that the condition in \eqref{eq:CondIntegr} fails for some $p\in [1,\infty)$ or $p=\infty$. Since $G_S$ is not a polynomial, there exists a multi-index $\gamma\in\mathbb N_0^S$ with $|\gamma|=m$ such that $\partial_y^\gamma G_S\not\equiv0$. By homogeneity,
\begin{align*}
\partial_y^\gamma G_S(\rho\theta)=\rho^{A_S(\alpha)-B_S(\beta)-m}\partial_y^\gamma G_S(\theta).
\end{align*}
Hence there exists an angular neighborhood on which $|\partial_y^\gamma G_S(\theta)|$ is bounded from below by a positive constant. Since the remaining factor in $R_\beta^\alpha$ is smooth and strictly positive, the corresponding derivative of $R_\beta^\alpha$ has the same leading radial behavior. If $p<\infty$ and $A_S(\alpha)-B_S(\beta)\leq m-|S|/p$, its $p$-th power is not integrable near $F_S$. If $p=\infty$ and $A_S(\alpha)-B_S(\beta)<m$, the derivative is unbounded near $F_S$. Thus $R_\beta^\alpha\notin W^{m,p}(T)$ whenever one of the stated conditions fails. This proves necessity and therefore the equivalence.
\end{proof}
\begin{remark}[Higher dimensions]
The proof of Lemma~\ref{lem:Regularity} is dimension-independent and extends verbatim to simplices in arbitrary spatial dimension. 
\end{remark}

\subsection{Singular edge bubbles}
We enrich the polynomial Zienkiewicz space by specific rational bubbles associated to edges. 
Let $e_{\ell m} = [v_\ell,v_m]$ with $\ell m \in \mathcal{E}_4$ be an edge of $T$ and let $n\in I\setminus\lbrace\ell,m\rbrace$ with $I\coloneqq\lbrace0,1,2,3\rbrace$. 
We set the edge bubble $b_{\ell m}\coloneqq \lambda_\ell\lambda_m$. Moreover, we denote by $i=i(\ell,m,n)$ the remaining index, so that
\begin{align*}
  \lbrace i,\ell,m,n\rbrace=I .
\end{align*}
We define the singular edge bubble
\begin{align}\label{eq:DefEellm}
  E_{\ell m}^n\coloneqq \frac{\lambda_n\lambda_\ell^2\lambda_m^2}{(\lambda_n+\lambda_\ell)(\lambda_n+\lambda_m)} = R_{\fre_{n\ell} + \fre_{nm}}^{\fre_n+2\fre_\ell+2\fre_m}.
\end{align}
Every edge carries two singular edge bubbles, one for each vertex not contained in the edge.
\begin{lemma}[Properties of the singular edge bubbles]
\label{lem:edge-bubble-properties}
Let $0\leq \ell<m\leq 3$, let $n\in I\setminus\lbrace\ell,m\rbrace$, and let $i=i(\ell,m,n)$ be such that $\lbrace i,\ell,m,n\rbrace=I$. Then $E_{\ell m}^n\in W^{2,\infty}(T)$. Moreover, we have
\begin{align*}
  E_{\ell m}^n|_{f_j}= \begin{cases}
 0&\text{for }j\in\lbrace\ell,m,n\rbrace,\\
 \frac{\lambda_n\lambda_\ell^2\lambda_m^2}{(\lambda_n+\lambda_\ell)(\lambda_n+\lambda_m)}\big|_{f_i} &\text{else}.   
  \end{cases}
\end{align*}
All scalar edge traces vanish and the edge-gradient traces satisfy
\begin{align*}
  \nabla E_{\ell m}^n|_{e}= \begin{cases}
b_{\ell m}\nabla\lambda_n&\text{for the edge } e = e_{\ell m},\\
0&\text{for all edges }e\neq e_{\ell m}.
\end{cases}  
\end{align*}
\end{lemma}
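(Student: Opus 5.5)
Regularity follows from Lemma~\ref{lem:Regularity}, the traces by inspection, and the edge gradients from the product rule.

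\textbf{Regularity.} We apply Lemma~\ref{lem:Regularity} with $m=2$, $p=\infty$, $\alpha=\fre_n+2\fre_\ell+2\fre_m$ and $\beta=\fre_{n\ell}+\fre_{nm}$. We must check $A_S(\alpha)-B_S(\beta)\geq 2$ for every $S$ with $2\le|S|\le3$ and $B_S(\beta)>0$. Such an $S$ contains $n$ together with $\ell$ or $m$.
\begin{itemize}
\item For $S=\{n,\ell\}$ or $S=\{n,m\}$ we get $A_S=3$ and $B_S=1$, so the difference is $2$.
\item For $S=\{n,\ell,m\}$ we get $A_S=5$ and $B_S=2$, so the difference is $3$.
\item For $S=\{n,\ell,i\}$ or $S=\{n,m,i\}$ we get $A_S=3$ and $B_S=1$, so the difference is $2$.
\end{itemize}
Hence $E^n_{\ell m}\in W^{2,\infty}(T)$.

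\textbf{Face traces.} For $j\in\{\ell,m,n\}$ the numerator contains $\lambda_j$, so it vanishes on $f_j$. The function is bounded, and the denominator vanishes only on lower-dimensional sets (e.g. $\lambda_n=\lambda_\ell=0$), so the trace is $0$ by continuity. On $f_i$ no factor drops out, and the trace is the stated restriction. For the edge traces, every edge $e_{ab}$ misses two indices, and at least one of them lies in $\{\ell,m,n\}$. Thus every edge lies in some $f_j$ with $j\in\{\ell,m,n\}$, and the scalar edge traces vanish.

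\textbf{Edge gradients.} We write $E^n_{\ell m}=\lambda_n g$ with
\[
g\coloneqq \frac{\lambda_\ell^2\lambda_m^2}{(\lambda_n+\lambda_\ell)(\lambda_n+\lambda_m)}.
\]
The product rule gives $\nabla E = g\nabla\lambda_n+\lambda_n\nabla g$. Alternatively, one can use Lemma~\ref{lem:Differentiatoin} together with \eqref{eq:GradientBary}, which gives
\[
\partial_{\lambda_n}E=R^{2\fre_\ell+2\fre_m}_{\beta}-R^{\alpha}_{\beta+\fre_{n\ell}}-R^{\alpha}_{\beta+\fre_{nm}}.
\]
Each remaining partial derivative $\partial_{\lambda_k}E$ with $k\neq n$ still carries the factor $\lambda_n$ in its numerator.

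On $e_{\ell m}$ we have $\lambda_n=\lambda_i=0$, so $\nabla E=g\nabla\lambda_n$ there. Since $\lambda_\ell+\lambda_m=1$ on this edge, the interior limit gives $g=\lambda_\ell\lambda_m=b_{\ell m}$. The only question is whether the terms carrying $\lambda_n$ still vanish on $e_{\ell m}$, where the denominators also vanish at the endpoints. In the interior of the edge the denominators equal $\lambda_\ell,\lambda_m>0$, so the terms $\lambda_n\nabla g$ vanish there. The gradient is Lipschitz by the $W^{2,\infty}$ bound, so the identity extends to the endpoints by continuity.

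On the other edges we distinguish cases.
\begin{itemize}
\item \emph{Edges containing $n$ but not $i$} ($e_{n\ell}$, $e_{nm}$): here $\lambda_m=0$ or $\lambda_\ell=0$. Then $g=0$, and the remaining term $\lambda_n\nabla g$ has factors $\lambda_\ell^2\lambda_m$ or $\lambda_\ell\lambda_m^2$, so it vanishes.
\item \emph{Edges $e_{i\ell}$, $e_{im}$, $e_{ni}$:} on these edges at least two of $\lambda_n,\lambda_\ell,\lambda_m$ vanish. In the interior of the edge every term of $\nabla E$ keeps at least one vanishing factor, since $\alpha$ minus a unit vector still has two positive entries among $\{n,\ell,m\}$.
\end{itemize}
Again continuity of $\nabla E$ handles the endpoints.

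\textbf{Main obstacle.} The delicate point is the endpoint behaviour of $\nabla E$, where the denominators degenerate. The pointwise formula for the gradient is only valid in the interior of each edge, and the values at the vertices must be recovered from the Lipschitz continuity that the $W^{2,\infty}$ regularity provides.
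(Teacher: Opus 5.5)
Your overall route matches the paper's. You apply Lemma~\ref{lem:Regularity} with $m=2$, $p=\infty$, read off the traces by inspection, and use the chain rule \eqref{eq:GradientBary} with $\partial_{\lambda_n}E^n_{\ell m}|_{e_{\ell m}}=b_{\ell m}$. Your explicit check of $A_S(\alpha)-B_S(\beta)\geq 2$ over all relevant $S$ is correct, and more complete than the paper, which only invokes the lemma. The face traces, the scalar edge traces, and the gradient on $e_{\ell m}$, $e_{n\ell}$, $e_{nm}$, $e_{ni}$ are handled correctly: on each of these edges the denominator is positive in the relative interior, and the extension to the endpoints via the Lipschitz gradient is fine. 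A minor point: on $e_{\ell m}$, the value $g=\lambda_\ell\lambda_m$ comes from $(\lambda_n+\lambda_\ell)(\lambda_n+\lambda_m)=\lambda_\ell\lambda_m$ at $\lambda_n=0$; the identity $\lambda_\ell+\lambda_m=1$ plays no role.

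There is one step whose justification fails: your treatment of $e_{i\ell}$ and $e_{im}$.

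\textbf{Why the step fails.} On $e_{i\ell}$ we have $\lambda_n=\lambda_m=0$, so the denominator factor $\lambda_n+\lambda_m$ vanishes identically along the whole edge, not only at its endpoints. The same holds for $\lambda_n+\lambda_\ell$ on $e_{im}$. Every term of $\nabla E^n_{\ell m}$ from Lemma~\ref{lem:Differentiatoin} is therefore an indeterminate form $0/0$ at every point of these edges. The argument ``each term keeps at least one vanishing numerator factor'' proves nothing here: $\lambda_n/(\lambda_n+\lambda_m)$ also has a vanishing numerator factor, but it has no limit at the edge. For the same reason your ``main obstacle'' paragraph misplaces the difficulty. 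On these two edges the pointwise formula for the gradient is valid nowhere, not merely at the endpoints.

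\textbf{Repair by counting degrees.} Near a relative-interior point of $e_{i\ell}$, the factors $\lambda_\ell$ and $\lambda_n+\lambda_\ell$ stay bounded away from zero. Each term of $\partial_{\lambda_k}E^n_{\ell m}$ is then a bounded smooth factor times $\lambda_n^a\lambda_m^b/(\lambda_n+\lambda_m)^c$ with $a+b-c\geq 1$. Examples are $\lambda_m^2/(\lambda_n+\lambda_m)$, $\lambda_n\lambda_m/(\lambda_n+\lambda_m)$ and $\lambda_n\lambda_m^2/(\lambda_n+\lambda_m)^2$. Each such term is bounded by $(\lambda_n+\lambda_m)^{a+b-c}\to 0$, because $\lambda_n,\lambda_m\leq \lambda_n+\lambda_m$.

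\textbf{Cleaner repair using what you already proved.} Since $E^n_{\ell m}\in W^{2,\infty}(T)$, its gradient extends continuously to $\overline T$. The function $E^n_{\ell m}$ vanishes on both $f_n$ and $f_m$, so at points of $e_{i\ell}=f_n\cap f_m$ the continuous gradient has zero component along every tangent direction of $f_n$ and of $f_m$. These two tangent planes together span $\mathbb R^3$, so $\nabla E^n_{\ell m}|_{e_{i\ell}}=0$. The same argument covers $e_{im}=f_n\cap f_\ell$ and $e_{ni}=f_\ell\cap f_m$ at once, with no term-by-term computation. The paper's own one-line justification, that $\lambda_\ell^2$ or $\lambda_m^2$ ``vanishes to second order'', passes over the same vanishing denominator. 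Either repair closes the gap in your proof.
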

\begin{proof}
The function $E_{\ell m}^n = R_\beta^\alpha$ with $\beta = \fre_{n\ell} + \fre_{nm}$ and $\alpha = \fre_n + 2\fre_\ell + 2\fre_m$ belongs to the class of rational bubbles introduced in Section~\ref{sec:RatBubbles}.  
Hence the criterion in Lemma~\ref{lem:Regularity} applies for $m=2$ and $p=\infty$, and we obtain
\begin{align*}
E_{\ell m}^n\in W^{2,\infty}(T).
\end{align*}
The trace statements follow directly from the definition of $E_{\ell m}^n$. On the edge $e_{\ell m}$, we have $\lambda_n=0$ and $\lambda_i=0$. Differentiation in the $\lambda_n$ direction gives
\begin{align*}
  \partial_{\lambda_n}E_{\ell m}^n|_{e_{\ell m}}=\frac{\lambda_\ell^2\lambda_m^2}{\lambda_\ell\lambda_m}=b_{\ell m}.
\end{align*}
All other first derivatives vanish on $e_{\ell m}$. On every other edge, at least one of the factors $\lambda_\ell^2$ or $\lambda_m^2$ vanishes to second order, and the asserted gradient trace follows by \eqref{eq:GradientBary}.
\end{proof}
The lemma shows that the vertex Hermite data \eqref{eq:DOFShermite} of the singular edge bubbles vanish. Moreover, the edge bubbles generate precisely the missing quadratic part of the gradient trace on each edge. More precisely, let $e_{\ell m}$ be an edge and let
\begin{align*}
  I_{\ell m}\coloneqq I\setminus\lbrace \ell,m\rbrace=\lbrace n_0,n_1\rbrace .
\end{align*}
Then $\nabla\lambda_{n_0}$ and $\nabla\lambda_{n_1}$ span the two-dimensional space $e_{\ell m}^{\perp}\coloneqq \lbrace \nu\in\mathbb R^3\colon \nu\perp e_{\ell m}\rbrace$.
By Lemma~\ref{lem:edge-bubble-properties}, we have
\begin{align*}
  \nabla E_{\ell m}^{n}|_{e_{\ell m}}=b_{\ell m}\nabla\lambda_n\qquad\text{for }n\in I_{\ell m}.
\end{align*}
This yields
\begin{align*}
  \nabla\operatorname{span}\lbrace E_{\ell m}^{n}\colon n\in I_{\ell m}\rbrace\big|_{e_{\ell m}}=\operatorname{span}\lbrace b_{\ell m}\gamma\colon \gamma\in e_{\ell m}^{\perp}\rbrace .
\end{align*}
This gives two additional linearly independent edge-gradient components on every edge. We add these functions to the polynomial Zienkiewicz space. More precisely, with $J_n \coloneqq I \setminus \lbrace n \rbrace$ we set the space
\begin{align*}
 & \mathcal B_{\operatorname{edge}}(T)\coloneqq \operatorname{span}\lbrace E_{\ell m}^n\colon 0\leq \ell<m\leq 3,\ n\in I\setminus\lbrace\ell,m\rbrace\rbrace \\
  &= \operatorname{span}\lbrace R^\alpha_\beta \colon \alpha = \fre_n + 2 \fre_\ell + 2 \fre_m \text{ and } \beta = \fre_{n\ell} + \fre_{nm} \text{ with } n \in I, \ell,m \in J_n, \ell < m \rbrace.
\end{align*}
We define the edge-bubble-enriched space
\begin{align*}
  Z_3^e(T)\coloneqq Z_3^0(T)+\mathcal B_{\operatorname{edge}}(T)
\quad \text{with}\quad
  \dim Z_3^e(T)=\dim Z_3^0(T)+12=28.
\end{align*}
To set new degrees of freedom, we fix for every edge $e_{\ell m}$ of $T$ two linearly independent vectors
\begin{align*}
  \gamma_{\ell m}^0,\gamma_{\ell m}^1\in e_{\ell m}^{\perp}.
\end{align*}
This choice is made globally on the mesh $\tria$. Hence, if two tetrahedra share the same edge $e_{\ell m}$, then the same vectors $\gamma_{\ell m}^0,\gamma_{\ell m}^1$ are used from both sides.
For $z\in Z_3^e(T)$, the edge degrees of freedom are defined by
\begin{align}\label{eq:EdgeDofs}
  \mathcal E_{\ell m}^{j}(z)\coloneqq 4\,\nabla z(\operatorname{mid}(e_{\ell m}))\cdot\gamma_{\ell m}^{j}\qquad\text{for }j=0,1 \text{ and }0\leq \ell <m \leq 3.
\end{align}
If $I_{\ell m}=\lbrace n_0,n_1\rbrace$, then
\begin{align*}
  \mathcal E_{\ell m}^{j}(E_{\ell m}^{n_a})=\gamma_{\ell m}^{j}\cdot\nabla\lambda_{n_a}\qquad\text{for }j,a\in\lbrace 0,1 \rbrace.
\end{align*}
Moreover, we have for all $0 \leq \ell'<m'\leq 3$ the property
\begin{align*}
  \mathcal E_{\ell m}^{j}(E_{\ell' m'}^{n})=0\qquad\text{for }e_{\ell' m'}\neq e_{\ell m}.
\end{align*}
The degrees of freedom for the edge-enriched space $Z_3^e(T)$ are the vertex Hermite data \eqref{eq:DOFShermite} together with the edge-gradient degrees of freedom in \eqref{eq:EdgeDofs}.
There are $16+12=28$ degrees of freedom, matching the dimension of $Z_3^e(T)$.

\begin{lemma}[Unisolvence of the edge-enriched space]
\label{lem:unisolvence-edge-enriched}
The vertex Hermite degrees of freedom in \eqref{eq:DOFShermite} together with the edge-gradient degrees of freedom in \eqref{eq:EdgeDofs} are unisolvent for $Z_3^e(T)$.
\end{lemma}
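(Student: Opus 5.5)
Since the number of degrees of freedom equals $28 = \dim Z_3^e(T)$, it suffices to show that a function $z\in Z_3^e(T)$ with vanishing vertex Hermite data \eqref{eq:DOFShermite} and vanishing edge degrees of freedom \eqref{eq:EdgeDofs} is zero. (The dimension count $28$ itself relies on the twelve bubbles $E_{\ell m}^n$ being linearly independent modulo $Z_3^0(T)$; the argument below establishes this as a by-product, since it shows the only element of $Z_3^0(T)+\mathcal B_{\operatorname{edge}}(T)$ with vanishing data is zero, and the data map is onto by the computations below.) I write
\begin{align*}
z = p + \sum_{\ell m\in\mathcal E_4}\sum_{n\in I_{\ell m}} c_{\ell m}^n E_{\ell m}^n\qquad\text{with }p\in Z_3^0(T).
\end{align*}
By Lemma~\ref{lem:edge-bubble-properties}, the bubbles $E_{\ell m}^n$ are $C^1$ up to the vertices. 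Their values and gradients vanish on all edges except for the gradient on their own edge, where it equals $b_{\ell m}\nabla\lambda_n$, and $b_{\ell m}$ vanishes at the vertices. Hence all vertex Hermite data of the bubbles vanish, and the vertex data of $z$ coincide with those of $p$. Thus $p$ has vanishing vertex Hermite data, and Lemma~\ref{lem:unisolvence} gives $p=0$.

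It remains to show that all coefficients $c_{\ell m}^n$ vanish. Fix an edge $e_{\ell m}$ with $I_{\ell m}=\{n_0,n_1\}$. Evaluating the edge degrees of freedom and using the identities stated before the lemma, namely $\mathcal E_{\ell m}^j(E_{\ell' m'}^n)=0$ for $e_{\ell'm'}\neq e_{\ell m}$ and $\mathcal E_{\ell m}^j(E_{\ell m}^{n_a})=\gamma_{\ell m}^j\cdot\nabla\lambda_{n_a}$ (the factor $4$ compensates $b_{\ell m}(\operatorname{mid}(e_{\ell m}))=1/4$), yields the $2\times2$ system
\begin{align*}
0=\sum_{a=0}^1 c_{\ell m}^{n_a}\,\gamma_{\ell m}^j\cdot\nabla\lambda_{n_a}\qquad\text{for }j=0,1.
\end{align*}
The key step is the invertibility of the matrix $M=(\gamma_{\ell m}^j\cdot\nabla\lambda_{n_a})_{j,a}$. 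Both $\{\gamma^0_{\ell m},\gamma^1_{\ell m}\}$ and $\{\nabla\lambda_{n_0},\nabla\lambda_{n_1}\}$ are bases of the two-dimensional space $e_{\ell m}^\perp$. The second set is a basis because $\nabla\lambda_{n_a}\cdot(v_m-v_\ell)=0$ and the gradients of distinct barycentric coordinates of a nondegenerate tetrahedron are linearly independent (any three of the four are). So $M$ is the Gram-type matrix of two bases of the same inner-product space, and therefore nonsingular. Consequently $c_{\ell m}^{n_0}=c_{\ell m}^{n_1}=0$. Doing this for all six edges gives $z=p=0$.

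I expect the only real care point to be the first step. One must verify that the edge bubbles indeed have vanishing gradients at the vertices, including on their own edge, which holds since $b_{\ell m}(v_\ell)=b_{\ell m}(v_m)=0$. One must also justify pointwise evaluation of gradients, which is legitimate since the bubbles are continuous up to the boundary by the limits computed in Lemma~\ref{lem:edge-bubble-properties}. Everything else reduces to Lemma~\ref{lem:unisolvence} and linear algebra.
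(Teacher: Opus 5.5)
Your proof is correct and follows the paper's argument. You first remove the $Z_3^0(T)$ part via Lemma~\ref{lem:unisolvence}, using that the edge bubbles have vanishing vertex Hermite data. Then, on each edge, you use that $\{\gamma_{\ell m}^0,\gamma_{\ell m}^1\}$ and $\{\nabla\lambda_{n_0},\nabla\lambda_{n_1}\}$ are both bases of $e_{\ell m}^\perp$; your invertible Gram-type matrix is just the paper's step $\xi_{\ell m}\perp\gamma_{\ell m}^j\Rightarrow\xi_{\ell m}=0$. Your side remark is also right: the argument works for any representation $p+\sum c_{\ell m}^nE_{\ell m}^n$, so it also gives the directness of the sum and $\dim Z_3^e(T)=28$, which the paper takes for granted.
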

\begin{proof}
Let $z\in Z_3^e(T)$ vanish in all these degrees of freedom. Since the edge bubbles have zero vertex Hermite data, the polynomial part $z_0\in Z_3^0(T)$ of $z$ has zero vertex Hermite data. By Lemma~\ref{lem:unisolvence}, we obtain $z_0=0$. Hence
\begin{align*}
z=\sum_{0\leq \ell<m\leq3}\sum_{n\in I_{\ell m}} c_{\ell m}^{n}E_{\ell m}^{n}\qquad\text{with coefficients } c_{\ell m}^{n} \in \mathbb{R}.
\end{align*}
Lemma~\ref{lem:Differentiatoin}  and \eqref{eq:GradientBary} give for an edge $e_{\ell m}$ and $I_{\ell m}=\lbrace n_0,n_1\rbrace$
\begin{align*}
\nabla z|_{e_{\ell m}}=b_{\ell m}\sum_{a=0}^1 c_{\ell m}^{n_a}\nabla\lambda_{n_a}.
\end{align*}
We set the vector $\xi_{\ell m}\coloneqq\sum_{a=0}^1 c_{\ell m}^{n_a}\nabla\lambda_{n_a}$.
Since the edge degrees of freedom vanish, we obtain
\begin{align*}
0=\mathcal E_{\ell m}^{j}(z)=\sum_{a=0}^1 c_{\ell m}^{n_a}\gamma_{\ell m}^{j}\cdot\nabla\lambda_{n_a} = \xi_{\ell m} \cdot \gamma_{\ell m}^{j} \qquad\text{for }j=0,1.
\end{align*}
Since $\lambda_{n_0}$ and $\lambda_{n_1}$ vanish on $e_{\ell m}$, their gradients are orthogonal to the edge $e_{\ell m}$ and consequently $\xi_{\ell m}\in e_{\ell m}^{\perp}$. Moreover, the identity above shows that $\xi_{\ell m}$ is orthogonal to both $\gamma_{\ell m}^0$ and $\gamma_{\ell m}^1$. Since $\gamma_{\ell m}^0,\gamma_{\ell m}^1$ form a basis of $e_{\ell m}^{\perp}$, it follows that $\xi_{\ell m}=0$. The vectors $\nabla\lambda_{n_0}$ and $\nabla\lambda_{n_1}$ are linearly independent, and therefore $c_{\ell m}^{n_0}=c_{\ell m}^{n_1}=0$.
Since the edge $e_{\ell m}$ was arbitrary, all coefficients vanish and therefore $z=0$.
\end{proof}
Let $f_i$ be a fixed facet with normal unit vector $\nu_i\in \mathbb{R}^3$. If $e_{\ell m}\subset f_i$, then the remaining vertex of $f_i$ is denoted by
\begin{align*}
  \lbrace n\rbrace =  I \setminus \lbrace i,\ell,m\rbrace.
\end{align*}
The corresponding two-dimensional rational edge bubble on $f_i$ is
\begin{align*}
  E_{\ell m}^{n,i}\coloneqq \frac{\mu_n\mu_\ell^2\mu_m^2}{(\mu_n+\mu_\ell)(\mu_n+\mu_m)} = E_{\ell m}^n|_{f_i} .
\end{align*}
The scalar trace of the edge-enriched space on $f_i$ is $Z_3^e(T)|_{f_i}=Z_2^s(f_i)$ reads with $Z_2^0(f_i)$ defined in Lemma~\ref{lem:FacetTracePoly}
\begin{align*}
  Z_2^s(f_i)\coloneqq Z_2^0(f_i)+\operatorname{span}\lbrace E_{\ell m}^{n,i}\colon e_{\ell m}\subset f_i\text{ and }  \lbrace n \rbrace =I \setminus\lbrace i,\ell,m\rbrace\rbrace.
\end{align*}
Thus the scalar facet trace is the two-dimensional singular Zienkiewicz space on $f_i$, which shows continuity of the corresponding global finite element spaces.
To describe the normal traces created by the edge bubbles, we set
\begin{align*}
  \Phi_{\ell m,\ell}^{n,i}\coloneqq \partial_{\mu_\ell}E_{\ell m}^{n,i},\qquad \Phi_{\ell m,m}^{n,i}\coloneqq \partial_{\mu_m}E_{\ell m}^{n,i},\qquad \Phi_{\ell m,n}^{n,i}\coloneqq \partial_{\mu_n}E_{\ell m}^{n,i}-\mu_\ell\mu_m.
\end{align*}
Using Lemma~\ref{lem:Differentiatoin} (more precisely, its 2D version in \cite[Lem.~3.1]{DieningStornTscherpel24}), or a direct calculation, we obtain
\begin{align}\label{eq:sadfqq0}
\begin{aligned}
\Phi_{\ell m,\ell}^{n,i}
&=\partial_{\mu_\ell}E_{\ell m}^{n,i}
=\frac{\mu_n\mu_m^2}{\mu_n+\mu_m}\partial_{\mu_\ell}\left(\frac{\mu_\ell^2}{\mu_n+\mu_\ell}\right)\\
&=\frac{\mu_n\mu_m^2}{\mu_n+\mu_m}
\frac{2\mu_\ell(\mu_n+\mu_\ell)-\mu_\ell^2}{(\mu_n+\mu_\ell)^2} =\frac{\mu_n\mu_\ell\mu_m^2(2\mu_n+\mu_\ell)}{(\mu_n+\mu_\ell)^2(\mu_n+\mu_m)}.
\end{aligned}
\end{align}
Similar calculations reveal
\begin{align}\label{eq:sadfqq}
\begin{aligned}
\Phi_{\ell m,m}^{n,i} & = \frac{\mu_n\mu_\ell^2\mu_m(2\mu_n+\mu_m)}{(\mu_n+\mu_\ell)(\mu_n+\mu_m)^2},\\
\Phi_{\ell m,n}^{n,i} & = \frac{\mu_\ell^2\mu_m^2(\mu_\ell\mu_m-\mu_n^2)}{(\mu_n+\mu_\ell)^2(\mu_n+\mu_m)^2}-\mu_\ell\mu_m.
\end{aligned}
\end{align}
\begin{lemma}[Vanishing trace]\label{lem:VanishingTrace}
The trace of the functions $\Phi_{\ell m,\ell}^{n,i}$, $\Phi_{\ell m,m}^{n,i}$, and $\Phi_{\ell m,n}^{n,i}$ equals zero on the face boundary $\partial f_i$.
\end{lemma}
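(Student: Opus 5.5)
The plan is to check the three edges of $\partial f_i$ one at a time, using the explicit formulas \eqref{eq:sadfqq0} and \eqref{eq:sadfqq}. On $f_i$ the barycentric coordinates $\mu_n,\mu_\ell,\mu_m$ satisfy $\mu_n+\mu_\ell+\mu_m=1$. The boundary $\partial f_i$ is the union of the three edges $\lbrace\mu_n=0\rbrace=e_{\ell m}$, $\lbrace\mu_\ell=0\rbrace$ and $\lbrace\mu_m=0\rbrace$. We must take care at the vertices, where denominators such as $\mu_n+\mu_\ell$ vanish. We interpret the traces as the continuous extensions, which exist because the functions are bounded, and argue by limits or by the factored forms.

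\emph{First, the edges $\lbrace\mu_\ell=0\rbrace$ and $\lbrace\mu_m=0\rbrace$.} Away from the vertex $v_n$, the denominators $\mu_n+\mu_\ell$ and $\mu_n+\mu_m$ are bounded away from zero near the edge interior.
\begin{itemize}
\item $\Phi^{n,i}_{\ell m,\ell}$ contains the factors $\mu_\ell\mu_m^2$, so it vanishes on both edges.
\item $\Phi^{n,i}_{\ell m,m}$ contains $\mu_\ell^2\mu_m$, so it also vanishes on both edges.
\item $\Phi^{n,i}_{\ell m,n}$ is the difference of a term with factor $\mu_\ell^2\mu_m^2$ and the term $\mu_\ell\mu_m$, so both terms vanish.
\end{itemize}
At the vertices $v_\ell$ and $v_m$ no denominator degenerates, and $v_n$ is treated below.

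\emph{Second, the edge $e_{\ell m}=\lbrace\mu_n=0\rbrace$.} Here $\mu_\ell+\mu_m=1$ and the denominators reduce to $\mu_\ell^a\mu_m^b$, which are nonzero in the edge interior.
\begin{itemize}
\item $\Phi^{n,i}_{\ell m,\ell}$ and $\Phi^{n,i}_{\ell m,m}$ carry the factor $\mu_n$, so they vanish.
\item For $\Phi^{n,i}_{\ell m,n}$, setting $\mu_n=0$ gives $\mu_\ell^2\mu_m^2\cdot\mu_\ell\mu_m/(\mu_\ell^2\mu_m^2)-\mu_\ell\mu_m=0$.
\end{itemize}
This cancellation is exactly why the correction $-\mu_\ell\mu_m$ appears in the definition of $\Phi^{n,i}_{\ell m,n}$. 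It matches Lemma~\ref{lem:edge-bubble-properties}, which gives $\partial_{\lambda_n}E^n_{\ell m}|_{e_{\ell m}}=b_{\ell m}$.

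\emph{Finally, the vertices, where two of the $\mu$'s vanish.} This is the main obstacle, since the formulas are of the form $0/0$ there. At $v_\ell$ and $v_m$ (for example $\mu_n=\mu_m=0$, $\mu_\ell=1$) the denominators tend to $1$ or stay bounded below, so the numerators force the limit $0$. At $v_n$ the denominators also tend to $\mu_n^2=1$, so nothing degenerates there either.

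The only genuinely singular points are those where $\mu_n+\mu_\ell\to0$ or $\mu_n+\mu_m\to0$, namely $v_m$ and $v_\ell$ respectively. I will handle them with homogeneity bounds. Each rational term is bounded by its numerator degree minus its denominator degree in the vanishing variables, as in the proof of Lemma~\ref{lem:Regularity}. For instance, near $v_m$ with $\rho=\mu_n+\mu_\ell\to0$ we get $|\Phi^{n,i}_{\ell m,\ell}|\lesssim \mu_n\mu_\ell/\rho^2\cdot(2\mu_n+\mu_\ell)\lesssim\rho$, and similarly for the other functions. All three functions are therefore continuous on $\overline{f_i}$ and tend to zero at every vertex. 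Together with the edgewise computations, this shows that the traces vanish on all of $\partial f_i$.
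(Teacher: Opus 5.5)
Your argument is correct and follows the paper's proof: you check the explicit formulas edge by edge, and the vertex values come from continuous extension. Your homogeneity bounds near $v_\ell$ and $v_m$ actually justify that extension, which the paper only asserts. You should clean up the vertex discussion, though. The claims that no denominator degenerates at $v_\ell$ and $v_m$ are false, since $\mu_n+\mu_m$ vanishes at $v_\ell$ and $\mu_n+\mu_\ell$ vanishes at $v_m$, while $v_n$ is the harmless vertex where both denominators equal $1$. This contradicts your later, correct sentence identifying $v_\ell$ and $v_m$ as the singular points.
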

\begin{proof}
The boundary $\partial f_i$ is the union of the three edges given by $\mu_\ell=0$, $\mu_m=0$, and $\mu_n=0$. For the first two functions the claim follows directly from the explicit formulas, since $\Phi_{\ell m,\ell}^{n,i}$ and $\Phi_{\ell m,m}^{n,i}$  contain the factor $\mu_n\mu_\ell\mu_m$ in the numerator.
It remains to consider $\Phi_{\ell m,n}^{n,i}$. If $\mu_\ell=0$ or $\mu_m=0$, then both terms in \eqref{eq:sadfqq} vanish. On the remaining edge $e_{\ell m}$ we have $\mu_n=0$, which yields
\begin{align*}
  \Phi_{\ell m,n}^{n,i}|_{e_{\ell m}}
  = \frac{\mu_\ell^2\mu_m^2\mu_\ell\mu_m}{\mu_\ell^2\mu_m^2}\Big|_{e_{\ell m}} - \mu_\ell\mu_m|_{e_{\ell m}} = 0.
\end{align*}
Thus all three functions equal zero on the relative interiors of the boundary edges. The values at the vertices are understood by continuous extension, and therefore the traces vanish on all of $\partial f_i$.
\end{proof}
Set $d_{ji}\coloneqq \nabla\lambda_j\cdot\nu_i$ for all $i,j\in I$ and define the space
\begin{align*}
  Q_{\operatorname{edge}}(f_i)&\coloneqq \operatorname{span}\lbrace \Phi_{\ell m,\ell}^{n,i},\Phi_{\ell m,m}^{n,i},\Phi_{\ell m,n}^{n,i}\colon e_{\ell m}\subset f_i\text{ and } \lbrace n \rbrace =I \setminus\lbrace i,\ell,m\rbrace\rbrace.
\end{align*}

\begin{lemma}[Normal traces]\label{lem:normal-traces-edge-bubbles}
For every facet $f_i$ there holds
\begin{align*}
  \partial_{\nu_i}Z_3^e(T)|_{f_i}\subset N_2(f_i) \coloneqq \mathcal P_2(f_i) \oplus Q_{\operatorname{edge}}(f_i).
\end{align*}
More precisely, if $e_{\ell m}\subset f_i$ and $ \lbrace n \rbrace =I \setminus\lbrace i,\ell,m\rbrace$, then
\begin{align}\label{eq:IdentLem}
  \partial_{\nu_i}E_{\ell m}^n|_{f_i}=d_{\ell i}\Phi_{\ell m,\ell}^{n,i}+d_{mi}\Phi_{\ell m,m}^{n,i}+d_{ni}\Phi_{\ell m,n}^{n,i}+d_{ni}\mu_\ell\mu_m.
\end{align}
On the remaining facets there holds
\begin{align}
\label{eq:IdentLemOtherFaces}
\partial_{\nu_n}E_{\ell m}^n|_{f_n} = d_{nn}\mu_\ell\mu_m, \qquad \partial_{\nu_\ell}E_{\ell m}^n|_{f_\ell} = 0, \qquad \partial_{\nu_m}E_{\ell m}^n|_{f_m} = 0.
\end{align}
\end{lemma}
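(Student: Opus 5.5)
The plan is to use the chain rule \eqref{eq:GradientBary}: for any function $F=F(\lambda_0,\dots,\lambda_3)$ in the rational class we have
\begin{align*}
\partial_{\nu_i}F=\sum_{k=0}^3 d_{ki}\,\partial_{\lambda_k}F .
\end{align*}
We then restrict this identity to $f_i$. On $f_i$ we have $\lambda_i=0$, and $\mu_j=\lambda_j|_{f_i}$ for $j\in J_i$. The inclusion $\partial_{\nu_i}Z_3^e(T)|_{f_i}\subset N_2(f_i)$ follows from three facts. First, Lemma~\ref{lem:FacetTracePoly} gives $\partial_{\nu_i}Z_3^0(T)|_{f_i}=\mathcal P_2(f_i)$. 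Second, each singular edge bubble $E_{\ell m}^n$ is, for fixed $i$, of one of the two types covered by \eqref{eq:IdentLem} and \eqref{eq:IdentLemOtherFaces}. Third, $\mu_\ell\mu_m\in\mathcal P_2(f_i)$. The case $i\in\{\ell,m,n\}$ is handled by \eqref{eq:IdentLemOtherFaces}, whose right-hand sides lie in $\mathcal P_2(f_i)$. The remaining case $i=i(\ell,m,n)$ is handled by \eqref{eq:IdentLem}. So the work is to verify these two identities.

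For \eqref{eq:IdentLem}, take $i=i(\ell,m,n)$. The function $E_{\ell m}^n$ does not depend on $\lambda_i$, so $\partial_{\lambda_i}E^n_{\ell m}=0$. Only the three partial derivatives with respect to $\lambda_\ell,\lambda_m,\lambda_n$ contribute. The formula for $E^n_{\ell m}$ involves only $\lambda_\ell,\lambda_m,\lambda_n$. Hence restricting $\partial_{\lambda_k}E^n_{\ell m}$ to $f_i$ for $k\in\{\ell,m,n\}$ is the same as differentiating the restricted expression $E^{n,i}_{\ell m}$ with respect to $\mu_k$. The only subtlety is to treat $\lambda_\ell,\lambda_m,\lambda_n$ as independent formal variables, which is how Lemma~\ref{lem:Differentiatoin} is set up, before restricting. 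This gives $\partial_{\lambda_\ell}E^n_{\ell m}|_{f_i}=\Phi^{n,i}_{\ell m,\ell}$ and $\partial_{\lambda_m}E^n_{\ell m}|_{f_i}=\Phi^{n,i}_{\ell m,m}$. By definition, $\partial_{\lambda_n}E^n_{\ell m}|_{f_i}=\Phi^{n,i}_{\ell m,n}+\mu_\ell\mu_m$. Multiplying by $d_{\ell i},d_{mi},d_{ni}$ and summing yields \eqref{eq:IdentLem}.

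For \eqref{eq:IdentLemOtherFaces}, differentiate with Lemma~\ref{lem:Differentiatoin} and use the vanishing of numerator factors on each face.

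On $f_n$ we have $\lambda_n=0$. Every term of $\partial_{\lambda_k}E^n_{\ell m}$ with $k\ne n$ keeps the factor $\lambda_n$ and therefore vanishes on $f_n$. This includes the terms with raised denominator exponents, since the denominators $\lambda_n+\lambda_\ell$ and $\lambda_n+\lambda_m$ stay positive in the interior of $f_n$. Computing the remaining term directly,
\begin{align*}
\partial_{\lambda_n}E^n_{\ell m}=\frac{\lambda_\ell^2\lambda_m^2}{(\lambda_n+\lambda_\ell)(\lambda_n+\lambda_m)}+\lambda_n(\cdots),
\end{align*}
which restricts on $f_n$ to $\mu_\ell\mu_m$. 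Hence $\partial_{\nu_n}E^n_{\ell m}|_{f_n}=d_{nn}\mu_\ell\mu_m$.

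On $f_\ell$ we have $\lambda_\ell=0$. Every derivative still carries at least the factor $\lambda_\ell$ from $\lambda_\ell^2$, so the whole gradient vanishes on $f_\ell$. The same argument applies on $f_m$. Near the edges where a denominator vanishes, we argue by continuous extension, as in Lemma~\ref{lem:VanishingTrace}. This is legitimate because $E^n_{\ell m}\in W^{2,\infty}(T)$, so its gradient is Lipschitz and its traces are defined pointwise.

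The main obstacle is the bookkeeping in identifying restricted barycentric partials with partial derivatives in the facet variables $\mu$. This identification holds only because $E^n_{\ell m}$ is independent of $\lambda_i$. The chain-rule coefficients $d_{ki}$, not the values of $\nabla\lambda_k$ tangential to $f_i$, are what appear in the normal derivative. I would state this identification once explicitly and then apply it to all three cases.
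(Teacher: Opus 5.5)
Your argument for the two identities and for the inclusion follows the paper's proof. It uses the chain rule \eqref{eq:GradientBary}, restriction to $f_i$, the identification $\partial_{\lambda_k}E_{\ell m}^n|_{f_i}=\partial_{\mu_k}E_{\ell m}^{n,i}$, the definitions of the $\Phi$'s, and the vanishing of numerator factors on $f_\ell$, $f_m$, $f_n$. Your explicit case split ($i=i(\ell,m,n)$ versus $i\in\lbrace\ell,m,n\rbrace$) is slightly more careful than the paper, which deduces the inclusion before proving \eqref{eq:IdentLemOtherFaces}.

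What you leave out is that the sum is direct. The statement defines $N_2(f_i)$ as $\mathcal P_2(f_i)\oplus Q_{\operatorname{edge}}(f_i)$, and the paper's proof establishes $\mathcal P_2(f_i)\cap Q_{\operatorname{edge}}(f_i)=\lbrace0\rbrace$ at this point. The paper cites exactly this fact later to make the decomposition \eqref{eq:DecompZface} unique, which is what makes $q_i(z)$ and the corrector $\mathcal C_T$ well defined. Your proposal never addresses it. The missing step is short and is the paper's argument. By Lemma~\ref{lem:VanishingTrace}, every generator of $Q_{\operatorname{edge}}(f_i)$, hence every element, vanishes on $\partial f_i$. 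So an element of the intersection is a quadratic polynomial vanishing on all three edges of $f_i$. It is then divisible by the cubic $\mu_\ell\mu_m\mu_n$ and must be zero.

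A minor aside: the identification $\partial_{\lambda_k}F|_{f_i}=\partial_{\mu_k}(F|_{\lambda_i=0})$ for $k\neq i$ does not require $F$ to be independent of $\lambda_i$. That independence only serves to drop the term $d_{ii}\partial_{\lambda_i}E_{\ell m}^n$ from the chain rule.
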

\begin{proof}
The chain rule \eqref{eq:GradientBary} gives
\begin{align*}
  \nabla E_{\ell m}^n = \partial_{\lambda_\ell}E_{\ell m}^n\nabla\lambda_\ell + \partial_{\lambda_m}E_{\ell m}^n\nabla\lambda_m + \partial_{\lambda_n}E_{\ell m}^n\nabla\lambda_n.
\end{align*}
Multiplying the restricting to the facet $f_i$ by $\nu_i$ thus yields
\begin{align*}
  \partial_{\nu_i}E_{\ell m}^n|_{f_i} = d_{\ell i}\partial_{\lambda_\ell}E_{\ell m}^n|_{f_i} + d_{mi}\partial_{\lambda_m}E_{\ell m}^n|_{f_i} +  d_{ni}\partial_{\lambda_n}E_{\ell m}^n|_{f_i}.
\end{align*}
On $f_i$ we have $\mu_j=\lambda_j|_{f_i}$ for all $j\in J_i$, and $E_{\ell m}^{n,i} = E_{\ell m}^n|_{f_i}$, resulting in
\begin{align*}
  \partial_{\lambda_j}E_{\ell m}^n|_{f_i} = \partial_{\mu_j}E_{\ell m}^{n,i}  \qquad\text{for all }j\in\lbrace\ell,m,n\rbrace.
\end{align*}
Combining these results with the identities defining the functions $\Phi_{\ell m,j}^{n,i}$ gives \eqref{eq:IdentLem}.
The last term in \eqref{eq:IdentLem} belongs to $\mathcal P_2(f_i)$, while the remaining terms belong to $Q_{\operatorname{edge}}(f_i)$. Combining this with the property that the polynomial Zienkiewicz part has normal trace in $\mathcal P_2(f_i)$, we obtain 
\begin{align*}
\partial_{\nu_i}Z_3^e(T)|_{f_i}\subset N_2(f_i) \coloneqq \mathcal P_2(f_i) + Q_{\operatorname{edge}}(f_i).
\end{align*}
The latter sum is direct, as any function in $Q_{\operatorname{edge}}(f_i)$ has vanishing trace on $\partial f_i$, see Lemma~\ref{lem:VanishingTrace}. Consequently, any $p\in \mathcal P_2(f_i)\cap Q_{\operatorname{edge}}(f_i)$ must satisfy $p|_{\partial f_i}=0$. However, any quadratic polynomial $p$ on a triangle that is zero on the boundary must equal $p=0$. 

It remains to consider the other facets. The definition of the singular edge bubble in \eqref{eq:DefEellm} yields that $E_{\ell m}^n$ vanishes quadratically in $\lambda_\ell$ on $f_\ell$ and quadratically in $\lambda_m$ on $f_m$. Consequently, we have
\begin{align*}
\nabla E_{\ell m}^n|_{f_\ell}=0 \qquad\text{and}\qquad \nabla E_{\ell m}^n|_{f_m}=0.
\end{align*}
On $f_n$ the function vanishes only linearly in $\lambda_n$. Moreover, we have
\begin{align*}
\partial_{\lambda_\ell}E_{\ell m}^n|_{f_n} = \partial_{\lambda_m}E_{\ell m}^n|_{f_n} = 0 \qquad\text{and}\qquad \partial_{\lambda_n}E_{\ell m}^n|_{f_n} = \mu_\ell\mu_m.
\end{align*}
This yields in combination with \eqref{eq:GradientBary} the identities in \eqref{eq:IdentLemOtherFaces}.
\end{proof}
Since the rational edge bubbles introduce non-polynomial normal traces on the faces $f_i$, see Lemma~\ref{lem:normal-traces-edge-bubbles}, the enrichment of the polynomial Zienkiewicz space by rational edge bubbles does not suffice to design a globally $H^2$ conforming finite element. To remedy this side-effect, we correct the rational part of the normal trace. 

\subsection{Rational normal trace correction}
Let $z\in Z_3^e(T)$. Since we have $\mathcal P_2(f_i)\cap Q_{\operatorname{edge}}(f_i)=\lbrace0\rbrace$ for every facet $f_i$, see Lemma~\ref{lem:normal-traces-edge-bubbles}, there is a unique decomposition
\begin{align}\label{eq:DecompZface}
  \partial_{\nu_i}z|_{f_i}=p_i(z)+q_i(z)\qquad \text{with } p_i(z)\in\mathcal P_2(f_i) \text{ and }q_i(z)\in Q_{\operatorname{edge}}(f_i).
\end{align}
We aim at removing the rational part $q_i(z)$ by subtracting a normal lift.
For this, we define for all $q \in Q_{\operatorname{edge}}(f_i)$ its canonical barycentric extension $\widetilde q$ to $T$  by replacing $\mu_j$ with $\lambda_j$ for all $j\in J_i$. Then we set the normal lift
\begin{align}\label{eq:TraceExtension}
L_i q\coloneqq \frac{\lambda_i}{d_{ii}}\Theta_i\widetilde q  \qquad\text{with}\qquad  \Theta_i\coloneqq \prod_{j\in J_i}\frac{\lambda_j}{\lambda_i+\lambda_j}\text{ and } d_{ii} \coloneqq \nabla \lambda_i \cdot \nu_i.
\end{align}
\begin{lemma}[Regularity]
\label{lem:regularity-normal-lifting}
Let $i\in I$ and let $q\in Q_{\operatorname{edge}}(f_i)$. Then
\begin{align*}
  L_iq\in W^{2,\infty}(T).
\end{align*}
\end{lemma}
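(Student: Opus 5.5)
The plan is to reduce everything to the regularity criterion of Lemma~\ref{lem:Regularity} with $m=2$ and $p=\infty$. Since $L_i$ is linear, it suffices to treat the nine spanning functions $\Phi_{\ell m,\ell}^{n,i},\Phi_{\ell m,m}^{n,i},\Phi_{\ell m,n}^{n,i}$ of $Q_{\operatorname{edge}}(f_i)$. For each one I would write $L_i\Phi$ as a finite linear combination of rational bubbles $R^\alpha_\beta$ from \eqref{eq:DefRatBubble}, using Lemma~\ref{lem:Multi}. Then it remains to verify, for every $S$ with $2\le|S|\le3$ and $B_S(\beta)>0$, that
\begin{align*}
A_S(\alpha)-B_S(\beta)\ge 2 .
\end{align*}
Throughout, write $\{i,\ell,m,n\}=I$. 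The prefactor in \eqref{eq:TraceExtension} is
\begin{align*}
\frac{\lambda_i}{d_{ii}}\Theta_i=\frac{1}{d_{ii}}\frac{\lambda_i\lambda_\ell\lambda_m\lambda_n}{(\lambda_i+\lambda_\ell)(\lambda_i+\lambda_m)(\lambda_i+\lambda_n)},
\end{align*}
so it contributes exponent $\fre_i+\fre_\ell+\fre_m+\fre_n$ to $\alpha$ and $\fre_{i\ell}+\fre_{im}+\fre_{in}$ to $\beta$.

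For $\Phi_{\ell m,\ell}^{n,i}$ I will use \eqref{eq:sadfqq0}. I expand the factor $(2\lambda_n+\lambda_\ell)$ into two monomials, which gives two bubbles. For the first of them I get
\begin{align*}
\alpha=\fre_i+2\fre_n+2\fre_\ell+3\fre_m\ (+\fre_n\text{ or }+\fre_\ell),\qquad \beta=2\fre_{n\ell}+\fre_{nm}+\fre_{i\ell}+\fre_{im}+\fre_{in}.
\end{align*}
Checking the subsets case by case works out:
\begin{itemize}
\item $\{n,\ell\}$: $A\ge4$, $B=2$;
\item $\{i,\ell\},\{i,n\},\{i,m\},\{n,m\}$: $B=1$ and $A\ge3$;
\item $\{i,n,\ell\}$: $A\ge6$, $B=4$;
\item $\{n,\ell,m\}$: $A\ge8$, $B=3$;
\item $\{i,\ell,m\}$ and $\{i,n,m\}$: $A\ge6$, $B=2$.
\end{itemize}
So both bubbles lie in $W^{2,\infty}(T)$. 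The function $\Phi_{\ell m,m}^{n,i}$ is symmetric under $\ell\leftrightarrow m$ and is handled identically.

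The main obstacle is $\Phi_{\ell m,n}^{n,i}$, because of the subtracted polynomial $\mu_\ell\mu_m$ in \eqref{eq:sadfqq}. Lifting the two pieces separately fails. Indeed, $L_i(\mu_\ell\mu_m)$ has $A_{\{i,n\}}-B_{\{i,n\}}=2-1=1$ and is therefore not in $W^{2,\infty}$ by the converse part of Lemma~\ref{lem:Regularity}. So a cancellation has to be exhibited first. I would combine \eqref{eq:sadfqq} over the common denominator $P^2$, where $P\coloneqq(\mu_n+\mu_\ell)(\mu_n+\mu_m)=\mu_n^2+\mu_n(\mu_\ell+\mu_m)+\mu_\ell\mu_m$. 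The numerator is
\begin{align*}
\mu_\ell\mu_m\big(\mu_\ell\mu_m(\mu_\ell\mu_m-\mu_n^2)-P^2\big).
\end{align*}
Using $(\mu_\ell\mu_m)^2-P^2=(\mu_\ell\mu_m-P)(\mu_\ell\mu_m+P)$ and $\mu_\ell\mu_m-P=-\mu_n(\mu_n+\mu_\ell+\mu_m)$, this becomes
\begin{align*}
\Phi_{\ell m,n}^{n,i}=-\frac{\mu_n\mu_\ell\mu_m\,G}{(\mu_n+\mu_\ell)^2(\mu_n+\mu_m)^2},\qquad G\coloneqq(\mu_n+\mu_\ell+\mu_m)(P+\mu_\ell\mu_m)+\mu_\ell\mu_m\mu_n,
\end{align*}
where $G$ is a cubic with nonnegative coefficients. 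The essential gain is the extra factor $\mu_n$, which Lemma~\ref{lem:VanishingTrace} already hints at.

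Expanding $G$ into monomials, each lifted term has
\begin{align*}
\alpha=\fre_i+2\fre_n+2\fre_\ell+2\fre_m+\gamma\quad(|\gamma|=3),\qquad \beta=2\fre_{n\ell}+2\fre_{nm}+\fre_{i\ell}+\fre_{im}+\fre_{in}.
\end{align*}
The condition now holds for every relevant $S$:
\begin{itemize}
\item $\{i,n\}$: $A\ge3$, $B=1$;
\item $\{n,\ell\}$ and $\{n,m\}$: $A\ge4$, $B=2$;
\item $\{i,\ell\}$ and $\{i,m\}$: $A\ge3$, $B=1$;
\item $\{i,n,\ell\}$ and $\{i,n,m\}$: $A\ge5$, $B=4$ before adding $\gamma$, with the deficit supplied by $\gamma$;
\item $\{n,\ell,m\}$: $A\ge6$, $B=4$;
\item $\{i,\ell,m\}$: $A\ge5$, $B=2$.
\end{itemize}

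The delicate cases are the triples containing $n$ and $\ell$, or $n$ and $m$. There the monomials of $G$ that avoid those indices, such as $\mu_m^3$ for $S=\{i,n,\ell\}$, must be checked individually. If some monomial gives only $A-B=1$, I would instead group terms of $G$, for example $G=(\mu_n+\mu_\ell)H_1+\mu_m^2(\ldots)$, to cancel one power of $(\lambda_n+\lambda_\ell)$ before applying the criterion. This bookkeeping is the step I expect to require the most care.
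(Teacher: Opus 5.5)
This is essentially the paper's own proof. Both reduce to the nine generators, expand $L_i\Phi$ into bubbles $R^\alpha_\beta$ with exactly the paper's multi-indices, and pull the factor $\mu_n$ out of the numerator of $\Phi_{\ell m,n}^{n,i}$ before applying Lemma~\ref{lem:Regularity}.

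The step you left open closes at once and needs no regrouping. Your own formula for $G$ gives $G=0$ whenever $\mu_n=\mu_\ell=0$ or $\mu_n=\mu_m=0$. Hence $G$ contains neither $\mu_m^3$ nor $\mu_\ell^3$; the paper lists its eight monomials explicitly. So every term has $\gamma_n+\gamma_\ell\ge1$ and $\gamma_n+\gamma_m\ge1$, which gives $A_S(\alpha)-B_S(\beta)\ge2$ on $\{i,n,\ell\}$ and $\{i,n,m\}$.

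The only other blemish is a harmless slip in the $\Phi_{\ell m,\ell}^{n,i}$ case: $B_{\{i,n,m\}}=3$, not $2$, which still leaves $A-B\ge3$.
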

\begin{proof}
It suffices to prove the assertion for the generators of $Q_{\operatorname{edge}}(f_i)$. Let $e_{\ell m}\subset f_i$ and let $\lbrace n\rbrace=I\setminus\lbrace i,\ell,m\rbrace$. We use the regularity criterion of Lemma~\ref{lem:Regularity}, which says $W^{2,\infty}(T)$ regularity of a rational function $R_\beta^\alpha$ requires 
\begin{align*}
 2 \leq  A_S(\alpha)-B_S(\beta)\qquad \text{for all $S\subset I$ with $2\leq |S|\leq 3$ and $B_S(\beta)>0$}.
\end{align*}
We first consider
\begin{align*}
  L_i\Phi_{\ell m,\ell}^{n,i} = \frac{\lambda_i}{d_{ii}}   \frac{\lambda_n\lambda_\ell\lambda_m^2(2\lambda_n+\lambda_\ell)}{(\lambda_n+\lambda_\ell)^2(\lambda_n+\lambda_m)}\prod_{j\in J_i}\frac{\lambda_j}{\lambda_i+\lambda_j}.
\end{align*}
This function is a linear combination of two functions $R_\beta^\alpha$ with
\begin{align*}
  \beta = \fre_{i\ell}+\fre_{im}+\fre_{in}+2\fre_{n\ell}+\fre_{nm}
\quad \text{and}\quad 
  \alpha= \begin{cases}
\fre_i+2\fre_\ell+3\fre_m+3\fre_n&\text{or}\\
\fre_i+3\fre_\ell+3\fre_m+2\fre_n.
\end{cases}  
\end{align*}
For both choices of $\alpha$ we have  for all relevant subsets $S \subset \lbrace i,\ell,m,n \rbrace$ that
\begin{align*}
2 \leq  A_S(\alpha)-B_S(\beta).
\end{align*}
Hence $L_i\Phi_{\ell m,\ell}^{n,i}\in W^{2,\infty}(T)$. The argument for $L_i\Phi_{\ell m,m}^{n,i}$ is identical.
It remains to consider $L_i\Phi_{\ell m,n}^{n,i}$. We use the identity
\begin{align*}
  \Phi_{\ell m,n}^{n,i} = \frac{\mu_\ell^2\mu_m^2(\mu_\ell\mu_m-\mu_n^2)}{(\mu_n+\mu_\ell)^2(\mu_n+\mu_m)^2}-\mu_\ell\mu_m.
\end{align*}
After passing to the canonical barycentric extension and putting the two terms over a common denominator, the numerator factorizes as
\begin{align*}
  &\lambda_\ell^2\lambda_m^2(\lambda_\ell\lambda_m-\lambda_n^2) -\lambda_\ell\lambda_m(\lambda_n+\lambda_\ell)^2(\lambda_n+\lambda_m)^2 \\
  & = - \lambda_\ell\lambda_m \big( (\lambda_n+\lambda_\ell)^2(\lambda_n+\lambda_m)^2 - \lambda_\ell\lambda_m(\lambda_\ell\lambda_m-\lambda_n^2)\big)\\ 
  & =  -\lambda_\ell\lambda_m\lambda_n \big( 2\lambda_\ell^2\lambda_m +\lambda_\ell^2\lambda_n +2\lambda_\ell\lambda_m^2 +5\lambda_\ell\lambda_m\lambda_n  +2\lambda_\ell\lambda_n^2 +\lambda_m^2\lambda_n +2\lambda_m\lambda_n^2+\lambda_n^3\big).
\end{align*}
Consequently, $L_i\Phi_{\ell m,n}^{n,i}$ is a linear combination of functions $R_\beta^\alpha$ which, with
\begin{align*}
\tau \in \lbrace 2\fre_\ell + \fre_m, 2\fre_\ell + \fre_n,\fre_\ell + 2\fre_m,\fre_\ell + 2\fre_n, 2\fre_m+\fre_n,\fre_\ell+\fre_m+\fre_n, \fre_m + 2 \fre_n, 3 \fre_n\rbrace,
\end{align*}
read
\begin{align*}
  \beta = \fre_{i\ell}+\fre_{im}+\fre_{in}+2\fre_{n\ell}+2\fre_{nm}\quad\text{and}\quad  \alpha=\fre_i+2\fre_\ell+2\fre_m+2\fre_n+\tau.
\end{align*}
For all these choices of $\alpha$ one has
\begin{align*}
  2\leq A_S(\alpha)-B_S(\beta)\qquad\text{for every $S\subset I$ with $2\leq |S|\leq 3$ and $B_S(\beta)>0$}.
\end{align*}
Lemma~\ref{lem:Regularity} therefore yields $L_i\Phi_{\ell m,n}^{n,i}\in W^{2,\infty}(T)$.
Since any $q\in Q_{\operatorname{edge}}(f_i)$ is a finite linear combination of the above generators, the claim follows.
\end{proof}
Our construction allows us to remove singular normal traces, as depicted in the following lemma.
\begin{lemma}[Normal lifting]
\label{lem:normal-face-bubbles}
Let $i\in I$ and $q\in Q_{\operatorname{edge}}(f_i)$. Then we have
\begin{align*} 
L_iq|_{\partial T}=0\qquad\text{and}\qquad
  \partial_{\nu_i}L_iq|_{f_j}= \begin{cases} q &\text{for }i  = j,\\
  0& \text{for }j\neq i.
  \end{cases}
\end{align*}
Moreover, $\nabla L_iq$ vanishes on every edge of $T$.
\end{lemma}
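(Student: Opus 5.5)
We work with the generators of $Q_{\operatorname{edge}}(f_i)$ and write $L_iq = \frac{\lambda_i}{d_{ii}}\,g$ with $g\coloneqq \Theta_i\widetilde q$. From \eqref{eq:sadfqq0}--\eqref{eq:sadfqq} and the factorization in the proof of Lemma~\ref{lem:regularity-normal-lifting}, every generator is $\mu_\ell\mu_m$ times a bounded rational factor; the same then holds for $\widetilde q$. In particular $\widetilde q$ vanishes on $f_\ell$ and on $f_m$. Lemma~\ref{lem:regularity-normal-lifting} gives $L_iq\in W^{2,\infty}(T)\subset C^1(\overline T)$, so all traces of $L_iq$ and $\nabla L_iq$ are well-defined as continuous extensions. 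It therefore suffices to compute limits on the relative interiors of faces and edges.

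\emph{Step 1 (scalar trace).} On $f_i$ the factor $\lambda_i$ vanishes while $g$ stays bounded, so $L_iq|_{f_i}=0$. On $f_j$ with $j\in J_i$, the factor $\Theta_i$ contains $\lambda_j/(\lambda_i+\lambda_j)$. This factor vanishes on the relative interior of $f_j$, where $\lambda_i>0$. All other factors are bounded there, because the only singular denominators $\lambda_a+\lambda_b$ degenerate on edges, not on face interiors. Hence $L_iq|_{\partial T}=0$.

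\emph{Step 2 (normal derivatives).} The product rule gives $\nabla L_iq = \frac{g}{d_{ii}}\nabla\lambda_i + \frac{\lambda_i}{d_{ii}}\nabla g$. For the term $\lambda_i\nabla g$ I would use the homogeneity and boundedness estimates from the proof of Lemma~\ref{lem:Regularity}: near edges, $\nabla g$ has a singularity of order at most $\rho^{-1}$, and this is compensated by the factor $\lambda_i$ or by the vanishing numerators. On $f_i$ we have $\lambda_j/(\lambda_i+\lambda_j)=1$ for all $j\in J_i$, so $\Theta_i|_{f_i}=1$ and $g|_{f_i}=q$. Consequently $\partial_{\nu_i}L_iq|_{f_i} = q\,\nabla\lambda_i\cdot\nu_i/d_{ii} = q$. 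For $j\neq i$, the function $L_iq$ vanishes on $f_j$ by Step~1, so only its normal derivative can survive there. Writing $L_iq=\lambda_j h$, where $h$ collects $\lambda_i/(\lambda_i+\lambda_j)$ and the remaining factors, we get $\nabla L_iq|_{f_j}=h|_{f_j}\nabla\lambda_j$. On the interior of $f_j$, $h$ carries either a factor $\lambda_\ell$ or $\lambda_m$ from $\widetilde q$ that does not vanish, or another $\Theta_i$ factor. The cleaner route is the following. For $j\in\{\ell,m\}$, the extension $\widetilde q$ contains $\lambda_\ell\lambda_m$, so $L_iq$ contains $\lambda_j^2$, and the gradient vanishes on $f_j$. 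For $j=n$, the factor $\Theta_i$ contains $\lambda_n$, and $\widetilde q$ also vanishes at $\lambda_n=0$ on the interior of $f_n$. Indeed, each $\Phi$ has a factor $\mu_n$: this is explicit for the first two generators and follows from the factorization for the third. Hence $L_iq$ vanishes quadratically in $\lambda_n$ and $\partial_{\nu_n}L_iq|_{f_n}=0$. The dependence of $\ell,m,n$ on the chosen generator is harmless, since $\{\ell,m,n\}=J_i$ always.

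\emph{Step 3 (edges).} Since $\nabla L_iq$ is continuous, its value on an edge is the limit of its values on an adjacent face. Every edge of $T$ lies in some face $f_j$ with $j\neq i$. On such a face, Step~2 shows that $\nabla L_iq|_{f_j}=0$ in full, not just its normal component. Therefore $\nabla L_iq$ vanishes on every edge.

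The main obstacle is the bookkeeping in Step~2: confirming, for each generator and each $j$, that the vanishing order in $\lambda_j$ is at least two on the relative interior of $f_j$. This requires tracking the $\mu_n$ factor hidden in $\Phi_{\ell m,n}^{n,i}$, which the factorization in the proof of Lemma~\ref{lem:regularity-normal-lifting} supplies.
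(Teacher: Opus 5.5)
Your argument is correct and is essentially the paper's proof. The factor $\lambda_i$ together with $\Theta_i|_{f_i}=1$ handles $f_i$. On $f_j$ with $j\neq i$, the factor $\lambda_j/(\lambda_i+\lambda_j)$ of $\Theta_i$ combined with $\widetilde q|_{f_j}=0$ makes the full gradient vanish, and this gives the edge statement. The only variation is that you read off $\widetilde q|_{f_j}=0$ from the explicit factor $\mu_\ell\mu_m\mu_n$ in every generator, whereas the paper uses degree-two homogeneity and Lemma~\ref{lem:VanishingTrace}. The near-edge $\rho^{-1}$ remark in Step~2 is unnecessary: all factors are smooth near relative face interiors, and continuity of $\nabla L_iq$ does the rest.
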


\begin{proof}
Recall the definition of $L_iq$ in \eqref{eq:TraceExtension}.
Its scalar trace is zero on $f_i$ because of the factor $\lambda_i$. If $j\neq i$, then $\Theta_i$ contains the factor $\lambda_j/(\lambda_i+\lambda_j)$, and hence its scalar trace is zero on $f_j$. Thus, we have $L_iq|_{\partial T}=0$.
We next compute the normal traces. On $f_i$ we have $\lambda_i=0$ and $\Theta_i=1$, resulting in
\begin{align*}
  \partial_{\nu_i}L_iq|_{f_i} = \frac{1}{d_{ii}} \partial_{\nu_i}\big(\lambda_i\Theta_i\widetilde q\big)|_{f_i} = \frac{1}{d_{ii}} (\nabla\lambda_i\cdot\nu_i)\Theta_i\widetilde q|_{f_i}
  = q.
\end{align*}
Now let $j\neq i$. Rearranging terms yields
\begin{align*}
  L_iq = \frac{\lambda_i}{d_{ii}}   \frac{\lambda_j}{\lambda_i+\lambda_j}   \Theta_{ij}\widetilde q \qquad\text{with }  \Theta_{ij} \coloneqq  \prod_{k\in J_i\setminus\lbrace j\rbrace}\frac{\lambda_k}{\lambda_i+\lambda_k}.
\end{align*}
Since $\lambda_j/(\lambda_i + \lambda_j)$ and $\lambda_j$ vanish on $f_j$, the product and quotient rule gives
\begin{align*}
\partial_{\nu_j}L_iq|_{f_j} &= \left(\frac{\lambda_i}{d_{ii}}\Theta_{ij}\widetilde q\, \partial_{\nu_j}\frac{\lambda_j}{\lambda_i+\lambda_j}\right)\bigg|_{f_j} = \left(\frac{\lambda_i}{d_{ii}}\Theta_{ij}\widetilde q\, \frac{\lambda_i\partial_{\nu_j}\lambda_j-\lambda_j\partial_{\nu_j}\lambda_i}{(\lambda_i+\lambda_j)^2}\right)\bigg|_{f_j}\\
& =  \frac{1}{d_{ii}}\Theta_{ij}\widetilde q|_{f_j}\partial_{\nu_j}\lambda_j.
\end{align*}
Every function $q\in Q_{\operatorname{edge}}(f_i)$ is homogeneous of degree two in the barycentric coordinates $(\mu_k)_{k\in J_i}$. Hence, the vanishing of $q$ on the edge $\mu_j=0$ implies that its canonical barycentric extension satisfies $\widetilde q|_{f_j}=0$.
Consequently, we obtain $\partial_{\nu_j}L_iq|_{f_j}=0$ for every $j\neq i$. 
Since, the function $L_iq$ equals zero at all edges of $T$, its directional derivative along these edges equals zero. Combining this with the first part of the lemma shows that the gradient at all edges is zero.
\end{proof}

With this operator we define for any $z\in Z_3^e(T)$ with normal derivative $ \partial_{\nu_i} z|_{f_i} = p_i(z) + q_i(z)$ as in  \eqref{eq:DecompZface} the correction operator
\begin{align}\label{eq:CorrectionOperator}
 \mathcal C_T z\coloneqq \sum_{i=0}^3 L_iq_i(z) \qquad\text{and}\qquad   \mathcal R_T z\coloneqq z-\mathcal C_Tz.
\end{align}

\subsection{The singular Zienkiewicz tetrahedron}\label{subbsec:SingZienk}
Using the corrector $\mathcal{R}_T$ defined in \eqref{eq:CorrectionOperator}, we set the singular Zienkiewicz space
\begin{align*}
  Z_3^{\operatorname{sing}}(T)\coloneqq\mathcal R_T Z_3^e(T).
\end{align*}
The following lemma introduces a more direct characterization.
\begin{lemma}[Direct characterization]
\label{lem:direct-characterization-singular-zienkiewicz}
For $0\leq \ell<m\leq3$ and $n\notin\lbrace\ell,m\rbrace$, let $i$ denote the unique index in $\lbrace0,1,2,3 \rbrace \setminus \lbrace \ell,m,n \rbrace$. Using the canonical barycentric extensions of the functions in \eqref{eq:sadfqq0}, we define
\begin{align*}
\widehat E_{\ell m}^n\coloneqq E_{\ell m}^n-\frac{\lambda_i}{d_{ii}}\Theta_i\left(d_{\ell i} \widetilde \Phi_{\ell m,\ell}^{n,i}+d_{mi}\widetilde \Phi_{\ell m,m}^{n,i}+d_{ni} \widetilde \Phi_{\ell m,n}^{n,i}\right).
\end{align*}
Then the singular Zienkiewicz space admits the representation
\begin{align*}
Z_3^{\operatorname{sing}}(T)=Z_3^0(T)+\operatorname{span}\lbrace\widehat E_{\ell m}^n\colon 0\leq\ell<m\leq3,\ n\notin\lbrace\ell,m\rbrace\rbrace.
\end{align*}
\end{lemma}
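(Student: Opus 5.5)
The plan is to use that $\mathcal R_T$ is linear on $Z_3^e(T)=Z_3^0(T)+\mathcal B_{\operatorname{edge}}(T)$. Then $Z_3^{\operatorname{sing}}(T)$ is the span of the images of a spanning set, so it suffices to compute $\mathcal R_T$ on two families: functions $z\in Z_3^0(T)$ and the generators $E_{\ell m}^n$.

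\emph{Step 1: linearity.} By Lemma~\ref{lem:normal-traces-edge-bubbles} the sum $\mathcal P_2(f_i)\oplus Q_{\operatorname{edge}}(f_i)$ is direct. Hence $z\mapsto q_i(z)$ in \eqref{eq:DecompZface} is well defined and linear, because it is the composition of the linear map $z\mapsto\partial_{\nu_i}z|_{f_i}$ with the projection onto the second summand. The lifts $L_i$ are linear as well, so $\mathcal C_T$ and $\mathcal R_T$ are linear.

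\emph{Step 2: polynomial part.} For $z\in Z_3^0(T)$, Lemma~\ref{lem:FacetTracePoly} gives $\partial_{\nu_i}z|_{f_i}\in\mathcal P_2(f_i)$. Therefore $q_i(z)=0$ for every $i$, so $\mathcal C_Tz=0$ and $\mathcal R_Tz=z$. This gives $\mathcal R_T Z_3^0(T)=Z_3^0(T)$.

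\emph{Step 3: edge bubbles.} Fix $E_{\ell m}^n$ and let $i$ be the remaining index. I read off the four rational normal-trace parts from Lemma~\ref{lem:normal-traces-edge-bubbles}.
\begin{itemize}
\item On $f_i$, identity \eqref{eq:IdentLem} splits $\partial_{\nu_i}E_{\ell m}^n|_{f_i}$ into the polynomial part $d_{ni}\mu_\ell\mu_m\in\mathcal P_2(f_i)$ and the rational part $q_i=d_{\ell i}\Phi_{\ell m,\ell}^{n,i}+d_{mi}\Phi_{\ell m,m}^{n,i}+d_{ni}\Phi_{\ell m,n}^{n,i}\in Q_{\operatorname{edge}}(f_i)$.
\item On $f_n$, $f_\ell$ and $f_m$, identity \eqref{eq:IdentLemOtherFaces} shows the normal traces are $d_{nn}\mu_\ell\mu_m$, $0$ and $0$. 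All are quadratic polynomials, so $q_n=q_\ell=q_m=0$.
\end{itemize}
Consequently $\mathcal C_TE_{\ell m}^n=L_iq_i$. The definition \eqref{eq:TraceExtension} of $L_i$ via the canonical barycentric extension is linear, so $\widetilde{q_i}$ is the corresponding combination of the $\widetilde\Phi$'s. This gives exactly $\mathcal R_TE_{\ell m}^n=\widehat E_{\ell m}^n$.

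\emph{Step 4: conclusion.} Combining Steps 1--3,
\[
\mathcal R_T Z_3^e(T)=\mathcal R_T Z_3^0(T)+\operatorname{span}\{\mathcal R_TE_{\ell m}^n\}=Z_3^0(T)+\operatorname{span}\{\widehat E_{\ell m}^n\},
\]
which is the claimed representation.

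The only real obstacle is making sure the decomposition in Step 3 is correct. First, $d_{ni}\mu_\ell\mu_m$ must genuinely be polynomial and not absorbed into $Q_{\operatorname{edge}}$; this holds by directness of the sum, and it is exactly why the subtraction of $\mu_\ell\mu_m$ was built into the definition of $\Phi^{n,i}_{\ell m,n}$. Second, the facets other than $f_i$ must contribute no rational part, which the explicit identities \eqref{eq:IdentLemOtherFaces} settle.
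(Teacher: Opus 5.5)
Your proposal is correct and follows essentially the same route as the paper. Both arguments use the linearity of $\mathcal R_T$, the fact that $\mathcal R_T z = z$ on $Z_3^0(T)$ because of polynomial normal traces, and the computation $\mathcal R_T E_{\ell m}^n = E_{\ell m}^n - L_i q_i(E_{\ell m}^n) = \widehat E_{\ell m}^n$ from \eqref{eq:IdentLem}. Your use of \eqref{eq:IdentLemOtherFaces} to show that $f_n$, $f_\ell$, $f_m$ contribute no rational part, so that $\mathcal C_T E_{\ell m}^n = L_i q_i(E_{\ell m}^n)$, makes explicit a step the paper uses without stating it.
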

\begin{proof}
By the definition of the edge-enriched space,
\begin{align*}
Z_3^e(T)=Z_3^0(T)+\operatorname{span}\lbrace E_{\ell m}^n\colon 0\leq\ell<m\leq3,\ n\notin\lbrace\ell,m\rbrace\rbrace.
\end{align*}
For $e_{\ell m}\subset f_i$ and $\lbrace n\rbrace=\lbrace 0,1,2,3 \rbrace\setminus\lbrace i,\ell,m\rbrace$, the normal-trace identity \eqref{eq:IdentLem} yields the rational component of the normal trace
\begin{align*}
q_i(E_{\ell m}^n)=d_{\ell i}\Phi_{\ell m,\ell}^{n,i}+d_{mi}\Phi_{\ell m,m}^{n,i}+d_{ni}\Phi_{\ell m,n}^{n,i}.
\end{align*}
Hence, by the definition of the normal lift in \eqref{eq:TraceExtension}, we obtain
\begin{align*}
L_iq_i(E_{\ell m}^n)=\frac{\lambda_i}{d_{ii}}\Theta_i\left(d_{\ell i}\widetilde \Phi_{\ell m,\ell}^{n,i}+d_{mi}\widetilde \Phi_{\ell m,m}^{n,i}+d_{ni}\widetilde \Phi_{\ell m,n}^{n,i}\right).
\end{align*}
Consequently, it holds that $\mathcal R_TE_{\ell m}^n=E_{\ell m}^n-L_iq_i(E_{\ell m}^n)=\widehat E_{\ell m}^n$.
The corrector $\mathcal R_T$ is linear and acts as the identity on $Z_3^0(T)$, since functions in $Z_3^0(T)$ have polynomial normal traces and hence vanishing rational normal-trace components. Therefore, the assertion follows by 
\begin{align*}
\mathcal R_TZ_3^e(T)&=Z_3^0(T)+\operatorname{span}\lbrace\widehat E_{\ell m}^n\colon 0\leq\ell<m\leq3,\ n\notin\lbrace\ell,m\rbrace\rbrace.\qedhere
\end{align*}
\end{proof}
The following lemma shows that $Z_3^{\operatorname{sing}}(T)$ is a finite element with the vertex  Hermite degrees of freedom \eqref{eq:DOFShermite} and the edge-gradient degrees of freedom \eqref{eq:EdgeDofs}.
\begin{lemma}[Singular Zienkiewicz tetrahedron]
\label{lem:unisolvence-singular-reduced}
The vertex Hermite degrees of freedom \eqref{eq:DOFShermite} together with the edge-gradient degrees of freedom \eqref{eq:EdgeDofs} are unisolvent for $Z_3^{\operatorname{sing}}(T)$ and we have 
\begin{align*}
\dim Z_3^{\operatorname{sing}}(T)=28.
\end{align*}
\end{lemma}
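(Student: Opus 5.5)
The plan is to show that the correction $\mathcal C_T$ leaves every degree of freedom unchanged, and then to transfer unisolvence from $Z_3^e(T)$. By Lemma~\ref{lem:unisolvence-edge-enriched}, the 28 functionals (vertex Hermite data \eqref{eq:DOFShermite} and edge-gradient data \eqref{eq:EdgeDofs}) are unisolvent on $Z_3^e(T)$. In particular $\dim Z_3^e(T)=28$, and the map $z\mapsto(\text{DOFs of }z)$ is a bijection $Z_3^e(T)\to\mathbb R^{28}$. I will establish that for every $z\in Z_3^e(T)$ the functions $z$ and $\mathcal R_T z$ have identical degrees of freedom.

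First, for any $i$ and $q\in Q_{\operatorname{edge}}(f_i)$, Lemma~\ref{lem:normal-face-bubbles} gives $L_iq|_{\partial T}=0$, and $\nabla L_iq$ vanishes on every edge of $T$. Since $L_iq\in W^{2,\infty}(T)$ by Lemma~\ref{lem:regularity-normal-lifting}, $L_iq$ is $C^1$ up to the boundary, so these traces are meaningful pointwise. Every vertex lies on an edge, so $L_iq(v_k)=0$ and $\nabla L_iq(v_k)=0$ for all $k$. The same holds for $\nabla L_iq(\operatorname{mid}(e_{\ell m}))=0$, so all edge functionals $\mathcal E^j_{\ell m}(L_iq)$ vanish. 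Summing over $i$, every degree of freedom of $\mathcal C_T z$ is zero, and therefore $\mathcal R_T z$ has the same degrees of freedom as $z$.

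Next I deduce injectivity and the dimension count. If $z\in Z_3^e(T)$ satisfies $\mathcal R_T z=0$, then all degrees of freedom of $z$ vanish, so $z=0$ by Lemma~\ref{lem:unisolvence-edge-enriched}. Hence the linear map $\mathcal R_T\colon Z_3^e(T)\to Z_3^{\operatorname{sing}}(T)$, which is surjective by definition, is an isomorphism, and $\dim Z_3^{\operatorname{sing}}(T)=28$. For unisolvence, suppose $w=\mathcal R_Tz\in Z_3^{\operatorname{sing}}(T)$ has vanishing degrees of freedom. Then $z$ has vanishing degrees of freedom, so $z=0$ and $w=0$. Since the number of functionals equals the dimension, this gives unisolvence.

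The main subtlety is ensuring $\mathcal R_T$ is well defined and linear. This relies on the direct-sum decomposition \eqref{eq:DecompZface}, for which Lemma~\ref{lem:normal-traces-edge-bubbles} guarantees that $q_i(z)$ is unique and depends linearly on $z$. A second subtlety is justifying the pointwise evaluation of $\nabla L_iq$ at vertices, where the rational factors are singular. I would handle this by the $W^{2,\infty}$ regularity, which implies Lipschitz continuity of the gradient up to $\partial T$, so the edge trace statement extends to the vertices by continuity.
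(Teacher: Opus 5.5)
Your proposal is correct and follows the paper's proof. You use Lemma~\ref{lem:normal-face-bubbles} to show that $\mathcal C_T z$ has vanishing vertex Hermite and edge-gradient data, so $\mathcal R_T$ preserves all degrees of freedom, and then transfer unisolvence and $\dim Z_3^{\operatorname{sing}}(T)=28$ from $Z_3^e(T)$ via Lemma~\ref{lem:unisolvence-edge-enriched}; your remarks on the linearity of $\mathcal R_T$ (from the uniqueness in \eqref{eq:DecompZface}) and on the pointwise meaning of $\nabla L_iq$ at vertices and edge midpoints (from the $W^{2,\infty}$ regularity in Lemma~\ref{lem:regularity-normal-lifting}) make explicit what the paper leaves implicit.
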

\begin{proof}
We first observe that the correction operator does not change the degrees of freedom. Let $z\in Z_3^e(T)$ and write
\begin{align*}
 \mathcal R_T z = z - \mathcal{C}_Tz \qquad\text{with}\qquad  \mathcal C_Tz=\sum_{i=0}^3 L_iq_i(z).
\end{align*}
By Lemma~\ref{lem:normal-face-bubbles}, each lifting $L_iq_i(z)$ has zero scalar trace on $\partial T$ and its gradient vanishes on every edge of $T$. In particular, we obtain
\begin{align*}
  (\mathcal R_Tz)(v_r)=z(v_r)\qquad\text{and}\qquad \nabla(\mathcal R_Tz)(v_r)=\nabla z(v_r)
\end{align*}
for all vertices $v_r$ as well as for all edge-gradient degrees of freedom
\begin{align*}
  \mathcal E_{\ell m}^j(\mathcal R_Tz)=\mathcal E_{\ell m}^j(z).
\end{align*}
Consequently, the degrees of freedom uniquely determine $z$ and consequently $\mathcal{R}_T z$, which yields unisolvence of $Z_3^{\operatorname{sing}}(T)$ and $\dim Z_3^{\operatorname{sing}}(T) = \dim Z_3^e(T)=28$.
%
\end{proof}
Let $\tria$ be a conforming tetrahedral triangulation of $\Omega$. For every mesh edge $e_{\ell m}$, fix the two linearly independent vectors $\gamma_{\ell m}^0,\gamma_{\ell m}^1\in e_{\ell m}^{\perp}$ globally. We define the global singular Zienkiewicz space by
\begin{align*}
Z_{3}^{\operatorname{sing}}(\tria) \coloneqq \lbrace z\in L^2(\Omega)&\colon z|_T\in Z_3^{\operatorname{sing}}(T)\text{ for all }T\in\tria,\ \text{and all vertex Hermite}\\
 &\ \ \text{and edge-gradient degrees of freedom are single-valued}\rbrace .
\end{align*}
\begin{theorem}[Global $H^2$ conformity]
\label{thm:global-h2-conformity-singular}
The global finite element space satisfies
\begin{align*}
Z_{3}^{\operatorname{sing}}(\tria)\subset H^2(\Omega).
\end{align*}
\end{theorem}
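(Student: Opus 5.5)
We reduce the global statement to conditions on each interior facet. Every local function $z|_T$ lies in $W^{2,\infty}(T)$: $Z_3^0(T)$ consists of polynomials, the singular edge bubbles belong to $W^{2,\infty}(T)$ by Lemma~\ref{lem:edge-bubble-properties}, and the lifts belong to $W^{2,\infty}(T)$ by Lemma~\ref{lem:regularity-normal-lifting}. A piecewise $H^2$ function is in $H^2(\Omega)$ if and only if, on every interior facet $f=T^+\cap T^-$, both the scalar trace and the full gradient trace agree from both sides. Since the traces of a $W^{2,\infty}$ function on a facet are continuous, this can be verified pointwise on $f$. So the plan is to fix such a facet $f$, with local index $f=f_i$ in both $T^\pm$, and prove two identities:
\begin{align*}
z_+|_f=z_-|_f\qquad\text{and}\qquad \partial_{\nu}z_+|_f=\partial_\nu z_-|_f .
\end{align*}
Together these give equality of the full gradient trace, because tangential derivatives are determined by the scalar trace.

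\emph{Scalar trace.} Each lift $L_jq$ vanishes on $\partial T$ by Lemma~\ref{lem:normal-face-bubbles}. Hence $\mathcal R_T z|_{f}=z|_f$, and $Z_3^{\operatorname{sing}}(T)|_f=Z_3^e(T)|_f=Z_2^s(f)$, the two-dimensional singular Zienkiewicz space. I would then argue that $Z_2^s(f)$ is unisolvent for the vertex Hermite data restricted to $f$ (values and tangential gradients at the three vertices) together with one tangential-normal gradient component at each edge midpoint. This can be proved directly. Restricted to an edge, a function in $Z_2^s(f)$ is a cubic fixed by the Hermite data, as in Lemma~\ref{lem:unisolvence}. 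The face bubbles $E^{n,i}_{\ell m}$ contribute $b_{\ell m}\nabla\mu_n$ to the in-face gradient on $e_{\ell m}$, which is detected by the in-plane normal direction to the edge. Because the edge vectors $\gamma^0_{\ell m},\gamma^1_{\ell m}$ are chosen globally and span $e_{\ell m}^\perp$, shared edge degrees of freedom fix $\nabla z(\mathrm{mid}(e))$ orthogonal to $e$. In particular they fix the in-face normal component. The argument should then follow the unisolvence proof of Lemma~\ref{lem:unisolvence-edge-enriched}, applied within the face. Thus the single-valued degrees of freedom on $f$ determine $z_\pm|_f$ uniquely, and the two traces coincide.

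\emph{Normal trace.} For $z\in Z_3^e(T)$, by \eqref{eq:DecompZface} and Lemma~\ref{lem:normal-face-bubbles},
\begin{align*}
\partial_{\nu_i}\mathcal R_Tz|_{f_i}=p_i(z)+q_i(z)-\sum_j\partial_{\nu_i}L_jq_j(z)|_{f_i}=p_i(z)\in\mathcal P_2(f_i).
\end{align*}
So the normal derivative of $z_\pm$ on $f$ is a quadratic polynomial on a triangle, and such a polynomial is determined by six values. I would take its values at the three vertices and at the three edge midpoints. At the vertices, $\partial_\nu z_\pm(v)=\nabla z_\pm(v)\cdot\nu$ is fixed by the shared Hermite data; the sign of $\nu$ is only a convention, and we fix one common normal. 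At an edge midpoint, $\nabla z_\pm(\mathrm{mid}(e))$ is fully determined: its tangential component is the derivative of the common scalar edge trace, and its component in $e^\perp$ is given by the shared degrees of freedom $\mathcal E^j_e$. Since $\nu\in e^\perp$ for $e\subset f$, the values $\partial_\nu z_\pm(\mathrm{mid}(e))$ coincide. Hence the two quadratics agree and $\partial_\nu z_+|_f=\partial_\nu z_-|_f$.

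\emph{Main obstacle.} The delicate step is the normal-trace argument, for two reasons. First, the corrected functions $\mathcal R_Tz$ must have a purely quadratic normal trace. This needs that $L_jq$ with $j\neq i$ contributes nothing on $f_i$, which is the case $j\neq i$ of Lemma~\ref{lem:normal-face-bubbles}. Second, one must make sure the edge-midpoint gradients are genuinely single-valued and well defined. The rational functions are only $W^{2,\infty}$, but they are $C^1$ up to the boundary away from the vertices, so point evaluation of the gradient at edge midpoints is legitimate. This should follow from the explicit edge-gradient formula in Lemma~\ref{lem:edge-bubble-properties} together with the vanishing gradients of the lifts on edges. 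Finally, density and integration by parts on each element combine the facet identities into $z\in H^2(\Omega)$.
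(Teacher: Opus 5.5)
Your proposal is correct and follows the paper's proof essentially step by step. The scalar trace is handled by noting that $\mathcal R_T$ does not alter traces on $\partial T$ and by using the unisolvence of the two-dimensional singular Zienkiewicz space $Z_2^s(f)$; the normal trace is handled by showing $\partial_{\nu}(z|_{T^\pm})|_f\in\mathcal P_2(f)$ and matching it at the three vertices and three edge midpoints via $\nu\in e^\perp=\operatorname{span}\lbrace\gamma_e^0,\gamma_e^1\rbrace$, while your extra details (the $W^{2,\infty}$ regularity justifying pointwise gradient evaluation, and the sketch of the 2D trace unisolvence through the in-face normal component lying in $e^\perp$) only make explicit what the paper takes for granted.
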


\begin{proof}
Let $z\in Z_{3}^{\operatorname{sing}}(\tria)$ and let $f=T^+\cap T^-$ be an interior facet. We prove that the scalar trace and the full gradient trace coincide on the face $f$.

\textit{Step 1 (Scalar trace).}
The correction operator does not change scalar traces on $\partial T$. Hence, for each element $T$, the trace of $Z_3^{\operatorname{sing}}(T)$ on $f$ equals the trace of $Z_3^e(T)$ on $f$, namely the two-dimensional singular Zienkiewicz trace space $Z_2^s(f)$. By the trace unisolvence on $f$, this scalar trace is determined by the vertex Hermite data at the three vertices of $f$ and by the edge-gradient degrees of freedom on the three edges of $f$. These degrees of freedom are single-valued by definition of the global space, which yields $z|_{T^+}=z|_{T^-}$ on $f$.

\textit{Step 2 (Gradient trace).}
Since continuity yields continuity of the tangential gradients on faces $f$, it remains to prove continuity of the normal derivative. We fix a unit normal vector $\nu_f$ on $f$. By construction of the correction operator, the rational part of the normal trace has been removed. Hence
\begin{align*}
  \partial_{\nu_f}(z|_{T^+})|_f\in\mathcal P_2(f)
  \qquad\text{and}\qquad
  \partial_{\nu_f}(z|_{T^-})|_f\in\mathcal P_2(f).
\end{align*}
The values of these two quadratic polynomials at the vertices of $f$ are determined by the vertex gradients
\begin{align*}
  \partial_{\nu_f}(z|_{T^\pm})(v)=\nabla z(v)\cdot\nu_f
  \qquad\text{for every vertex }v\text{ of }f.
\end{align*}
These values agree from both sides because the vertex Hermite data are single-valued.
Let now $e\subset f$ be an edge and let $\operatorname{mid}(e)$ denote its midpoint. Since $\nu_f\perp e$, we have $\nu_f\in e^\perp$. Thus $\nu_f$ is a linear combination of the globally fixed vectors $\gamma_e^0,\gamma_e^1$. Consequently, the edge-gradient degrees of freedom determine
\begin{align*}
  \nabla z(\operatorname{mid}(e))\cdot\nu_f .
\end{align*}
These values are single-valued by definition of the global space. Hence the two quadratic polynomials $\partial_{\nu_f}(z|_{T^+})|_f$ and $\partial_{\nu_f}(z|_{T^-})|_f$ agree at the three vertices and at the three edge midpoints of $f$. Since these six point values are unisolvent for $\mathcal P_2(f)$, we obtain $\partial_{\nu_f}(z|_{T^+})=\partial_{\nu_f}(z|_{T^-})$ on $f$.
This verifies global $H^2$ regularity.
\end{proof}

\subsection{The cubic singular Zienkiewicz tetrahedron} 
The singular space $Z_3^{\operatorname{sing}}(T)$ does not contain all cubic polynomials, reducing the order of convergence. To remedy this drawback we add the four missing cubic face bubbles. In particular, we define for every facet $f_i$ with $i \in \lbrace 0,\dots, 3\rbrace$ and $J_i \coloneqq \lbrace 0,\dots,3\rbrace \setminus \lbrace i \rbrace$ the face bubble
\begin{align*}
  B_i\coloneqq \prod_{j\in J_i}\lambda_j \qquad\text{and set}\qquad \mathcal B_{\operatorname{cub}}(T)\coloneqq \operatorname{span}\lbrace B_i\colon i=0,\dots,3\rbrace.
\end{align*}
For every $i\in\lbrace0,1,2,3\rbrace$, define the corrected cubic facet bubble
\begin{align*}
W_i \coloneqq B_i - \sum_{\substack{e_{\ell m}\subset f_i\\ \lbrace i,\ell,m,n\rbrace=\lbrace0,1,2,3\rbrace}} \mathcal{R}_T E_{\ell m}^{n}\quad\text{and}\quad \mathcal B^{\operatorname{cor}}_{\operatorname{cub}}(T)\coloneqq \operatorname{span}\lbrace W_i\colon i=0,\dots,3\rbrace .
\end{align*}
\begin{lemma}[Corrected bubble]\label{lem:CorrBubble}
The function $W_i$ with $i=0,\dots,3$
\begin{enumerate}
\item vanishes in all vertex Hermite \eqref{eq:DOFShermite} and edge-gradient degrees of freedom \eqref{eq:EdgeDofs},
\item satisfies $\partial_{\nu_f} W_i|_f = 0$ for all faces $f$ of $T$,
\item satisfies $W_i|_f = 0$ for all faces $f$ of $T$ with $f \neq f_i$.\label{itm:Casda}
\end{enumerate}
\end{lemma}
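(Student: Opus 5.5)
The plan is to verify the three properties separately. Property (1) reduces to the degree-of-freedom computations already available for $E_{\ell m}^n$ and $\mathcal R_T$. Property (3) is a direct trace argument. Property (2) then follows from (1) by the same $\mathcal P_2$-unisolvence argument as in Step~2 of the proof of Theorem~\ref{thm:global-h2-conformity-singular}.

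Throughout, note that in the definition of $W_i$ the vertex $n$ is the third vertex of $f_i$. Hence $i=i(\ell,m,n)$ is exactly the facet on which $E_{\ell m}^n$ has a nonvanishing trace.

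For (1), I first treat $B_i=\lambda_\ell\lambda_m\lambda_n$. At every vertex at least two of these factors vanish, so $B_i$ and $\nabla B_i$ vanish at all vertices. On an edge $e_{ik}$ not contained in $f_i$, two factors of $B_i$ vanish, so $\nabla B_i|_{e_{ik}}=0$. On an edge $e_{\ell m}\subset f_i$ we have $\lambda_n=0$, so $\nabla B_i|_{e_{\ell m}}=b_{\ell m}\nabla\lambda_n$. At the midpoint this equals $\tfrac14\nabla\lambda_n$, and therefore $\mathcal E^j_{\ell m}(B_i)=\gamma^j_{\ell m}\cdot\nabla\lambda_n$.

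For the subtracted terms, the proof of Lemma~\ref{lem:unisolvence-singular-reduced} shows that $\mathcal R_T$ preserves all degrees of freedom, so $\mathcal R_T E_{\ell m}^n$ has the same degrees of freedom as $E_{\ell m}^n$. By Lemma~\ref{lem:edge-bubble-properties}, these are zero Hermite data, $\mathcal E^j_{\ell m}(E^n_{\ell m})=\gamma^j_{\ell m}\cdot\nabla\lambda_n$, and zero edge data on all other edges. Each edge of $f_i$ appears exactly once in the sum, so the edge data cancel in the difference and (1) follows.

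For (3), let $j\neq i$. Then $B_i$ contains the factor $\lambda_j$ and hence vanishes on $f_j$. Moreover $j\in\lbrace\ell,m,n\rbrace$ for every term in the sum, so $E_{\ell m}^n|_{f_j}=0$ by Lemma~\ref{lem:edge-bubble-properties}. Finally, the correction $\mathcal C_T E_{\ell m}^n$ has zero trace on $\partial T$ by Lemma~\ref{lem:normal-face-bubbles}.

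For (2), the key step, and the only one needing some care, is to show that $\partial_{\nu_j}W_i|_{f_j}\in\mathcal P_2(f_j)$ for every facet $f_j$.
\begin{itemize}
\item[$\bullet$] $B_i$ is cubic, so its normal traces are quadratic.
\item[$\bullet$] For $\mathcal R_T E_{\ell m}^n$ on $f_i$, the rational part $q_i$ is removed by $L_iq_i$, since $\partial_{\nu_i}L_iq_i|_{f_i}=q_i$ and the lifts $L_k q_k$ with $k\neq i$ have zero normal trace on $f_i$ by Lemma~\ref{lem:normal-face-bubbles}.
\item[$\bullet$] On $f_n,f_\ell,f_m$ the normal traces of $E_{\ell m}^n$ are already polynomial by \eqref{eq:IdentLemOtherFaces}. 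Hence $q_k(E^n_{\ell m})=0$ for $k\neq i$, and again the lift $L_iq_i$ contributes nothing there.
\end{itemize}
Thus $\partial_{\nu_j}W_i|_{f_j}$ is a quadratic polynomial on each face. By (1), its values at the three vertices of $f_j$ are $\nabla W_i(v)\cdot\nu_j=0$. Since $\nu_j\in e^\perp=\operatorname{span}\lbrace\gamma_e^0,\gamma_e^1\rbrace$ for each edge $e\subset f_j$, the vanishing edge-gradient degrees of freedom give zero values at the three edge midpoints. Unisolvence of these six point values for $\mathcal P_2(f_j)$ yields $\partial_{\nu_j}W_i|_{f_j}=0$.
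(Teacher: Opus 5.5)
Your proposal is correct and matches the paper's proof. For (1) you match the edge-gradient data of $B_i$ against those of $\mathcal R_T E^n_{\ell m}$. For (3) you use the traces of $B_i$, of $E^n_{\ell m}$, and of the lifts. For (2) you show that the quadratic normal traces vanish at the vertices and edge midpoints of each face. You also spell out two points the paper leaves implicit: why the normal traces of $W_i$ lie in $\mathcal P_2(f)$, and why the correction $\mathcal C_T E^n_{\ell m}$ does not affect (3).
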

\begin{proof}
Let $i=0,\dots,3$.
As the bubble is zero along all edges, the vertex Hermite data  \eqref{eq:DOFShermite} of $B_i$ vanish. Moreover, if $e_{\ell m}\subset f_i$ and $n$ is determined by $\lbrace i,\ell,m,n\rbrace=\lbrace0,1,2,3\rbrace$, then Lemma~\ref{lem:edge-bubble-properties} yields
\begin{align*}
\nabla B_i|_{e_{\ell m}} = \lambda_\ell\lambda_m\nabla\lambda_n = \nabla E_{\ell m}^{n}|_{e_{\ell m}}.
\end{align*}
The remaining terms in the definition of $W_i$ have vanishing gradient on $e_{\ell m}$. On all other edges the gradient of $B_i$ and of the correction equal zero. Hence the degrees of freedom in \eqref{eq:DOFShermite} and \eqref{eq:EdgeDofs} vanish. The vanishing normal derivative on a face $f$ results from the fact that by design $\partial_{\nu_f} W_i|_f \in \mathcal{P}_2(f)$ is a quadratic polynomial which is, as shown in the first part of the proof, zero at all the vertices of $f$ and in the midpoints of all edges. This yields $\partial_{\nu_f} W_i|_f = 0$. The property in \ref{itm:Casda} follows by Lemma~\ref{lem:edge-bubble-properties} and the definition of $B_i$.
\end{proof}
We define the cubic singular Zienkiewicz as
\begin{align*}
  Z_3^{\operatorname{cub}}(T)\coloneqq Z_3^{\operatorname{sing}}(T)+\mathcal B_{\operatorname{cub}}(T) = Z_3^{\operatorname{sing}}(T)+\mathcal B_{\operatorname{cub}}^{\operatorname{cor}}(T).
\end{align*}
The additional degrees of freedom are the missing cubic facet functionals in Remark~\ref{rem:EquiChar}, which read for $z\in W^{2,\infty}(T)$ and $i=0,\dots,3$
\begin{align}\label{eq:CubicFaceDof}
  \mathcal C_i(z) \coloneqq 6 z(\operatorname{mid}(f_i)) - 2\sum_{j\in J_i}z(v_j)  +\sum_{j\in J_i}\nabla z(v_j)\cdot \big(v_j-\operatorname{mid}(f_i)\big).
\end{align}
This functional only depends on the scalar trace on $f_i$, since the vectors $v_j-\operatorname{mid}(f_i)$ are tangential to $\nu_i$. 
\begin{lemma}[Cubic Zienkiewicz tetrahedron]\label{lem:unisolvence-cubic}
The degrees of freedom consisting of the vertex Hermite data \eqref{eq:DOFShermite}, the edge-gradient data \eqref{eq:EdgeDofs}, and the facet functionals \eqref{eq:CubicFaceDof} are unisolvent for the space $Z_3^{\operatorname{cub}}(T)$. Moreover, we have
\begin{align*}
Z_3^{\operatorname{sing}}(T) \cap \mathcal{B}_{\operatorname{cub}}(T) = Z_3^{\operatorname{sing}}(T) \cap \mathcal{B}^{\operatorname{cor}}_{\operatorname{cub}}(T) = \lbrace0\rbrace
\quad\text{and}\quad \dim Z_3^{\operatorname{cub}}(T) = 32.
\end{align*}
\end{lemma}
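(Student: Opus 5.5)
The plan is to count dimensions and then prove injectivity of the full set of $32$ degrees of freedom on $Z_3^{\operatorname{cub}}(T)$. Unisolvence then yields both trivial intersections and $\dim Z_3^{\operatorname{cub}}(T)=32$. We have $28+4=32$ functionals, and the space is spanned by $Z_3^{\operatorname{sing}}(T)$ (dimension $28$, Lemma~\ref{lem:unisolvence-singular-reduced}) and the four functions $W_i$ (or $B_i$). Hence $\dim Z_3^{\operatorname{cub}}(T)\le 32$, and it suffices to show that a function vanishing in all $32$ degrees of freedom is zero.

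First I compute the facet functionals on the two pieces. For $z\in Z_3^{\operatorname{sing}}(T)$, the functional $\mathcal C_j(z)$ depends only on the trace $z|_{f_j}\in Z_2^s(f_j)$. Writing this trace as a polynomial Zienkiewicz part plus singular 2D edge bubbles, Remark~\ref{rem:EquiChar} (in its 2D form) shows that $\mathcal C_j$ annihilates the $Z_2^0(f_j)$ part.

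For the rational 2D bubbles $E_{\ell m}^{n,j}$ I avoid evaluating them directly. Since $\mathcal R_T$ does not change scalar traces, $W_i|_{f_j}=0$ for $j\ne i$ by Lemma~\ref{lem:CorrBubble}\ref{itm:Casda}, so $\mathcal C_j(W_i)=0$ for $j\neq i$. For $j=i$, note that $W_i|_{f_i}=B_i|_{f_i}-\sum E_{\ell m}^{n,i}$. The functional of $B_i$ is $\mathcal C_i(B_i)=6\cdot 3^{-3}=2/9\neq 0$, because $B_i$ and its gradient vanish at the vertices of $f_i$.

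The key step, and the one I expect to be the main obstacle, is to show $\mathcal C_i(W_i)\ne0$. This requires the explicit value of $\mathcal C_i(E_{\ell m}^{n,i})$. At the barycenter all $\mu=1/3$, so $E_{\ell m}^{n,i}(\operatorname{mid}(f_i))=(1/3)^5/(2/3)^2=1/108$, and the vertex Hermite data vanish. Hence $\mathcal C_i(E_{\ell m}^{n,i})=6/108=1/18$, and
\begin{align*}
\mathcal C_i(W_i)=\tfrac{2}{9}-3\cdot\tfrac{1}{18}=\tfrac{1}{18}\neq0.
\end{align*}
I would double-check this arithmetic, since the whole argument hinges on it. In particular, the sum in $W_i$ has exactly three terms, one per edge of $f_i$.

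Now let $z=s+\sum_i c_iW_i$ with $s\in Z_3^{\operatorname{sing}}(T)$ vanish in all degrees of freedom. By Lemma~\ref{lem:CorrBubble}, the $W_i$ have zero vertex Hermite and edge-gradient data. Hence $s$ has zero such data, and therefore $s=0$ by Lemma~\ref{lem:unisolvence-singular-reduced}. Then $0=\mathcal C_j(z)=c_j\,\mathcal C_j(W_j)$ forces $c_j=0$. This gives unisolvence and $\dim=32$.

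The intersections follow from the same argument. A function in $Z_3^{\operatorname{sing}}(T)\cap\mathcal B^{\operatorname{cor}}_{\operatorname{cub}}(T)$ equals $\sum c_iW_i$, whose Hermite and edge data vanish. Lying in $Z_3^{\operatorname{sing}}(T)$, it is therefore zero. For $\mathcal B_{\operatorname{cub}}(T)$, note that $B_i-W_i\in Z_3^{\operatorname{sing}}(T)$. So if $\sum c_iB_i\in Z_3^{\operatorname{sing}}(T)$, then $\sum c_iW_i\in Z_3^{\operatorname{sing}}(T)$, which gives $c_i=0$ by the previous case; this also shows the $W_i$ are linearly independent.
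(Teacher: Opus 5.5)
Your proposal is correct and follows essentially the same route as the paper: write $z=s+\sum_i c_iW_i$, use Lemma~\ref{lem:CorrBubble} and the unisolvence of $Z_3^{\operatorname{sing}}(T)$ to get $s=0$, and then conclude from $\mathcal C_j(W_i)=\delta_{ij}/18$. Your explicit computation $\mathcal C_i(W_i)=\tfrac29-3\cdot\tfrac1{18}=\tfrac1{18}$ confirms the value the paper only states, and your preliminary remark that $\mathcal C_j$ annihilates $Z_2^0(f_j)$ is not actually needed for the argument.
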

\begin{proof}
Let $z_b = z+ b \in Z_3^{\operatorname{cub}}(T)$ with $z \in Z_3^{\operatorname{sing}}(T)$ and $b \in \mathcal B_{\operatorname{cub}}^{\operatorname{cor}}(T)$ vanish at all degrees of freedom.
Using Lemma~\ref{lem:CorrBubble} and the unisolvence of $Z_3^{\operatorname{sing}}(T)$, we conclude $z = 0$. Furthermore, we have for all $i,j=0,\dots,3$
\begin{align*}
\mathcal C_j(W_i) = \begin{cases}
0 &\text{if }j\neq i,\\
1/18 & \text{if }j = i.
\end{cases}
\end{align*}
Hence, if $z_b = b$ vanishes at the degrees of freedom in \eqref{eq:CubicFaceDof}, it must equal zero.   This yields unisolvence and linear independence of basis functions in $Z_3^{\operatorname{sing}}(T)$ and $(W_i)_{i=0}^3$.
\end{proof}
Let $\tria$ be a triangulation of $\Omega$. For every interior facet $f$ fix a unit normal vector $\nu_f$ and for every edge fix the vectors $\gamma_{\ell m}^0,\gamma_{\ell m}^1$. Define the global space
\begin{align*}
Z_{3}^{\operatorname{cub}}(\tria) \coloneqq \lbrace z\in L^2(\Omega)&\colon z|_T\in Z_3^{\operatorname{cub}}(T)\text{ for all }T\in\tria,\text{ and all global }\\
&\quad \text{degrees of freedom \eqref{eq:DOFShermite}, \eqref{eq:EdgeDofs}, \eqref{eq:CubicFaceDof} are single-valued}\rbrace.
\end{align*}
\begin{theorem}[$H^2$ conformity -- cubic]
\label{thm:unisolvence-cub}\ 
The cubic Zienkiewicz space satisfies
\begin{align*}
Z_{3}^{\operatorname{cub}}(\tria)\subset H^2(\Omega).
\end{align*}
\end{theorem}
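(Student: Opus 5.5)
We follow the proof of Theorem~\ref{thm:global-h2-conformity-singular}. The plan is to fix $z\in Z_3^{\operatorname{cub}}(\tria)$ and an interior facet $f=T^+\cap T^-$, and to show that both the scalar trace and the normal derivative of $z|_{T^\pm}$ agree on $f$. Together with the elementwise regularity $Z_3^{\operatorname{cub}}(T)\subset W^{2,\infty}(T)$ (from Lemma~\ref{lem:Regularity}, Lemma~\ref{lem:regularity-normal-lifting} and the polynomial nature of $B_i$), this gives $z\in H^2(\Omega)$ by the standard piecewise-$H^2$ criterion. On each element we decompose $z|_{T^\pm}=z_\pm+b_\pm$ with $z_\pm\in Z_3^{\operatorname{sing}}(T^\pm)$ and $b_\pm\in\mathcal B^{\operatorname{cor}}_{\operatorname{cub}}(T^\pm)$; Lemma~\ref{lem:unisolvence-cubic} makes this decomposition unique.

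\textbf{Normal derivative.} By Lemma~\ref{lem:CorrBubble}, every $W_i$ has vanishing normal derivative on all faces and vanishing vertex Hermite and edge-gradient data. Hence $\partial_{\nu_f}(z|_{T^\pm})|_f=\partial_{\nu_f}z_\pm|_f\in\mathcal P_2(f)$. Moreover $z_\pm$ carries exactly the same vertex Hermite and edge-gradient data as $z|_{T^\pm}$, and these are single-valued. Step~2 of the proof of Theorem~\ref{thm:global-h2-conformity-singular} then applies verbatim: the two quadratics coincide at the three vertices and the three edge midpoints of $f$, so they are equal.

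\textbf{Scalar trace (main step).} We must show that the trace space $Z_3^{\operatorname{cub}}(T)|_f$ is unisolvent with respect to the face-related degrees of freedom. These are the vertex Hermite data at the three vertices of $f$, the edge-gradient data on the three edges of $f$, and the single facet functional $\mathcal C_f$. The last one depends only on the trace, since the vectors $v_j-\operatorname{mid}(f)$ are tangential. By Lemma~\ref{lem:CorrBubble}\ref{itm:Casda}, only the $W_i$ associated with $f=f_i$ has a nonzero trace on $f$. The trace space is therefore $Z_2^s(f)+\operatorname{span}\{W_i|_f\}$, which has dimension at most $13+1$ (assuming the 2D singular Zienkiewicz trace space $Z_2^s(f)$ has dimension $13$, as in \cite{DieningStornTscherpel24}).

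Suppose the difference $w=z|_{T^+}-z|_{T^-}$, restricted to $f$, vanishes in all these face data. Write $w|_f=s+cW_i|_f$ with $s\in Z_2^s(f)$. Since $W_i|_f$ has zero vertex Hermite and edge data, $s$ has zero data, and the 2D unisolvence used in Step~1 of the proof of Theorem~\ref{thm:global-h2-conformity-singular} gives $s=0$. Then $0=\mathcal C_f(w)=c/18$ by the computation in the proof of Lemma~\ref{lem:unisolvence-cubic}, so $c=0$.

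The delicate point is to make sure this is intrinsic to $f$, i.e.\ independent of which element we view it from. Concretely, we must check that the traces $Z_2^s(f)$ and $W_i|_f$ are defined in terms of $f$ alone, so that both sides produce the same space. The trace $W_i|_f=B_i|_f-\sum E^{n,i}_{\ell m}$ (the correction $L_i$ has zero trace) is expressed only through the barycentric coordinates $\mu$ of $f$, so both elements yield the same function on $f$. Likewise, the facet functional $\mathcal C_f$ and the constant $1/18$ do not depend on the side. With this observation the scalar traces agree, and $H^2$ conformity follows.
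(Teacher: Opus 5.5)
Your proof is correct and is essentially the paper's argument. The paper likewise splits off the corrected bubbles, uses Lemma~\ref{lem:CorrBubble} to reduce to $Z_3^{\operatorname{sing}}(\tria)\subset H^2(\Omega)$ for the normal derivative, and obtains trace continuity because only $W_f$ survives on $f$, with a side-independent trace and a coefficient fixed by the single-valued $\mathcal C_f$. Your ordering (first $s=0$, then $c=0$) also makes explicit that $\mathcal C_f$ does not vanish on $Z_3^{\operatorname{sing}}(T)$, a point the paper's terse proof leaves implicit.

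One minor slip: $\dim Z_2^s(f)=9+3=12$, not $13$, but your argument never uses that count.
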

\begin{proof}
Any $z_b \in Z_3^{\operatorname{cub}}(\tria)$ decomposes into functions $z_b = z + b$ such that
\begin{align*}
z|_T \in Z_3^{\operatorname{sing}}(T)\qquad \text{and}\qquad b|_T \in \mathcal B_{\operatorname{cub}}^{\operatorname{cor}}(T)\qquad\text{for all }T\in \tria.
\end{align*}
Using Lemma~\ref{lem:CorrBubble}, we conclude that $z$ is single valued at all global Hermite and edge-gradient degrees of freedom \eqref{eq:DOFShermite} and \eqref{eq:EdgeDofs}, proving that $z \in  Z_{3}^{\operatorname{sing}}(\tria) \subset H^2(\Omega)$.
According to Lemma~\ref{lem:CorrBubble}, the corrected cubic bubble part $b$ has a vanishing normal derivative on each face.
Continuity then follows from the fact that for any face $f$ shared by two simplices $T_+$ and $T_-$ only one local basis function $W_f^\pm$ in $\mathcal B_{\operatorname{cub}}^{\operatorname{cor}}(T_\pm)$ attains non-zero values which coincide due to \eqref{eq:CubicFaceDof} and $W_f^+|_f=W_f^-|_f$.
\end{proof}

\subsection{The reduced Zienkiewicz tetrahedron}\label{subsec:reduced-singular-zienkiewicz-tetrahedron}
In this subsection, we modify the singular Zienkiewicz space defined in Section~\ref{subbsec:SingZienk} to reduce its dimension.
Motivated by the corresponding idea in 2D, we therefore remove quadratic components in its normal traces. More precisely, we set 
\begin{align*}
Z_3^{\operatorname{red}}(T) \coloneqq \lbrace z \in Z_3^{\operatorname{sing}}(T) \colon \partial_{\nu_i} z|_{f_i}\in \mathcal P_1(f_i)\text{ for all }i=0,\dots,3\rbrace.
\end{align*}
In other words, we use the nodal basis functions associated to the edge-gradient degrees of freedom \eqref{eq:EdgeDofs} to correct the nodal basis functions. At the same time, we exclude them from the basis, resulting in 
\begin{align*}
\dim Z_3^{\operatorname{red}}(T) = \dim Z_3^{\operatorname{sing}}(T) - 12 = 16.
\end{align*}
The degrees of freedom of the reduced singular Zienkiewicz tetrahedron are only the vertex Hermite data in \eqref{eq:DOFShermite}.
\begin{lemma}[Unisolvence of the reduced singular Zienkiewicz tetrahedron]\label{lem:unisolvence-reduced-singular-zienkiewicz}
The vertex Hermite degrees of freedom are unisolvent for $Z_3^{\operatorname{red}}(T)$.
\end{lemma}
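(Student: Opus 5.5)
We prove injectivity of the vertex Hermite map on $Z_3^{\operatorname{red}}(T)$ and then match dimensions. The dimension count $\dim Z_3^{\operatorname{red}}(T)=16$ stated above needs its own justification, so we establish both halves.

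\emph{Injectivity.} Let $z\in Z_3^{\operatorname{red}}(T)$ vanish in all vertex Hermite data \eqref{eq:DOFShermite}. By Lemma~\ref{lem:unisolvence-singular-reduced}, $z$ is determined by its edge-gradient degrees of freedom \eqref{eq:EdgeDofs}, so it suffices to show $\nabla z(\operatorname{mid}(e_{\ell m}))=0$ for every edge. Fix an edge $e=e_{\ell m}$ and let $f_i,f_{i'}$ be the two faces containing it.
\begin{enumerate}
\item On $f_i$ the normal derivative $\partial_{\nu_i}z|_{f_i}$ lies in $\mathcal P_1(f_i)$ by definition of the reduced space. Its values at the vertices are $\nabla z(v)\cdot\nu_i=0$, so $\partial_{\nu_i}z|_{f_i}\equiv0$. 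The same holds on $f_{i'}$.
\item The scalar trace $z|_e$ is a cubic Hermite polynomial with vanishing endpoint data, since the singular bubbles and the lifts $L_iq$ vanish on edges by Lemmas~\ref{lem:edge-bubble-properties} and~\ref{lem:normal-face-bubbles}. Hence $z|_e=0$ and the tangential derivative along $e$ vanishes.
\item The vectors $\nu_i,\nu_{i'}$ are linearly independent and lie in $e^\perp$. Together with the tangent of $e$, step 1 and step 2 show that $\nabla z|_e=0$, in particular at $\operatorname{mid}(e)$.
\end{enumerate}
Hence all edge degrees of freedom vanish and $z=0$.

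\emph{Dimension.} We show $\dim Z_3^{\operatorname{red}}(T)\ge 16$, which I expect to be the main obstacle. The approach is to count constraints. Every $z\in Z_3^{\operatorname{sing}}(T)$ has $\partial_{\nu_i}z|_{f_i}\in\mathcal P_2(f_i)$. Such a quadratic lies in $\mathcal P_1(f_i)$ if and only if, for each of the three edges $e\subset f_i$, its midpoint value equals the average of the two endpoint values. This gives three linear conditions per face, twelve in total. Counting shows the kernel has dimension at least $28-12=16$.

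A more explicit route is to prescribe arbitrary vertex Hermite data and then choose the edge degrees of freedom. On each edge $e$ with adjacent faces $f_i,f_{i'}$, we set the two normal components $\nabla z(\operatorname{mid}(e))\cdot\nu_i$ and $\nabla z(\operatorname{mid}(e))\cdot\nu_{i'}$ equal to the linear interpolants of the vertex values. Since $\nu_i,\nu_{i'}$ span $e^\perp$, this fixes $\mathcal E_{\ell m}^0,\mathcal E_{\ell m}^1$ uniquely. This construction is consistent across faces because each face condition involves only its own normal, and each pair (edge, face) is used exactly once.

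The resulting $z\in Z_3^{\operatorname{sing}}(T)$ has a quadratic normal trace on each face. That trace coincides with the affine interpolant of its vertex values at the three vertices and the three edge midpoints, so by $\mathcal P_2$-unisolvence it is affine. Thus the map to the Hermite data is also surjective, and unisolvence follows with $\dim Z_3^{\operatorname{red}}(T)=16$.
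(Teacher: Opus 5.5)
Your proof is correct, and its injectivity part is the paper's argument: affine normal traces with vanishing vertex values force the edge-gradient degrees of freedom to vanish, and then unisolvence of $Z_3^{\operatorname{sing}}(T)$ applies. The paper's proof stops at injectivity and takes $\dim Z_3^{\operatorname{red}}(T)=16$ from the informal remark before the lemma; your count of twelve linear constraints on the $28$-dimensional space $Z_3^{\operatorname{sing}}(T)$, or equivalently your explicit correction of the Hermite nodal functions (which is what that remark describes), rigorously supplies the missing bound $\dim Z_3^{\operatorname{red}}(T)\geq 16$.
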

\begin{proof}
Let $z \in Z_3^{\operatorname{red}}(T)$ be a function that is zero in all Hermite degrees of freedom \eqref{eq:DOFShermite}. By definition, $z \in Z_3^{\operatorname{sing}}(T)$ with affine normal derivatives at the edges, implying that the edge-gradient degrees of freedom \eqref{eq:EdgeDofs} equal zero, too. Hence, unisolvence follows by the unisolvence of $Z_3^{\operatorname{sing}}(T)$ stated in Lemma~\ref{lem:unisolvence-singular-reduced}.
\end{proof}
Let $\tria$ be a triangulation of $\Omega$ and define the global space
\begin{align*}
Z_{3}^{\operatorname{red}}(\tria) \coloneqq \lbrace z\in L^2(\Omega)&\colon z|_T\in Z_3^{\operatorname{red}}(T)\text{ for all }T\in\tria\text{ and all global }\\
&\quad \text{degrees of freedom \eqref{eq:DOFShermite} are single-valued}\rbrace.
\end{align*}
\begin{lemma}[$H^2$ conformity -- reduced]
The reduced Zienkiewicz space satisfies
\begin{align*}
Z_{3}^{\operatorname{red}}(\tria)\subset H^2(\Omega).
\end{align*}
\end{lemma}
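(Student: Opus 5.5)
The plan is to reduce the statement to Theorem~\ref{thm:global-h2-conformity-singular}. The key observation is that every $z\in Z_3^{\operatorname{red}}(\tria)$ lies in $Z_3^{\operatorname{sing}}(\tria)$, that is, its edge-gradient degrees of freedom \eqref{eq:EdgeDofs} are automatically single-valued even though only the vertex Hermite data are prescribed globally.

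First, fix $z\in Z_3^{\operatorname{red}}(\tria)$. By definition $z|_T\in Z_3^{\operatorname{red}}(T)\subset Z_3^{\operatorname{sing}}(T)$ for every $T\in\tria$, so only single-valuedness of $\mathcal E_{\ell m}^j$ remains to be checked. Fix a mesh edge $e=e_{\ell m}$ and an element $T\ni e$. Let $f_i,f_{i'}$ be the two faces of $T$ containing $e$, with normals $\nu_i,\nu_{i'}$. These normals are linearly independent and both lie in $e^\perp$, which is two-dimensional, so they span $e^\perp$.

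Second, I determine $\nabla z|_T(\operatorname{mid}(e))\cdot\nu_i$ from vertex data. By definition of $Z_3^{\operatorname{red}}(T)$, the normal trace $\partial_{\nu_i}z|_{f_i}$ is affine on $f_i$. Its restriction to $e$ is therefore the affine interpolant of the vertex values $\nabla z(v_\ell)\cdot\nu_i$ and $\nabla z(v_m)\cdot\nu_i$, which gives
\begin{align*}
\nabla z|_T(\operatorname{mid}(e))\cdot\nu_i=\tfrac12\big(\nabla z(v_\ell)+\nabla z(v_m)\big)\cdot\nu_i .
\end{align*}
The same identity holds for $\nu_{i'}$. Since $\nabla z|_T(\operatorname{mid}(e))$ is a single vector (the function lies in $W^{2,\infty}$, so the gradient is continuous up to the boundary), linearity gives
\begin{align*}
\nabla z|_T(\operatorname{mid}(e))\cdot\gamma=\tfrac12\big(\nabla z(v_\ell)+\nabla z(v_m)\big)\cdot\gamma \qquad\text{for all }\gamma\in e^\perp .
\end{align*}
In particular, $\mathcal E^j_{\ell m}(z|_T)=2\big(\nabla z(v_\ell)+\nabla z(v_m)\big)\cdot\gamma^j_{\ell m}$. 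The right-hand side involves only vertex Hermite data, which are single-valued, so the edge-gradient degrees of freedom agree across all elements sharing $e$.

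Third, we conclude $z\in Z_3^{\operatorname{sing}}(\tria)$, and Theorem~\ref{thm:global-h2-conformity-singular} yields $z\in H^2(\Omega)$. Alternatively, one can argue directly: the scalar traces agree because $Z_2^s(f)$ is determined by these single-valued data, and the affine normal traces on a shared face agree at its three vertices.

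The main obstacle I expect is the second step. One must use the normal derivative with respect to the face's own normal $\nu_i$, where the affine property holds, and not an arbitrary vector of $e^\perp$. One must also make sure the point value at $\operatorname{mid}(e)$ is well defined, which is the boundary continuity of $\nabla z|_T$ noted above. Using the two faces of $T$ through $e$ resolves the first point cleanly.
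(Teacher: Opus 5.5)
Your proof is correct and takes the same route as the paper, which also observes that every $z\in Z_3^{\operatorname{red}}(\tria)$ has single-valued edge-gradient degrees of freedom and therefore lies in $Z_3^{\operatorname{sing}}(\tria)\subset H^2(\Omega)$. Your argument makes explicit the step the paper only sketches: the two face normals through $e$ span $e^\perp$, and affine interpolation on the face normal traces gives $\mathcal E^j_{\ell m}(z|_T)=2\big(\nabla z(v_\ell)+\nabla z(v_m)\big)\cdot\gamma^j_{\ell m}$, which depends only on vertex Hermite data.
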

\begin{proof}
Continuity and continuous differentiability follows by arguments similar to those in the proof of Lemma~\ref{lem:unisolvence-reduced-singular-zienkiewicz}; that is, one observes that any $z\in Z_{3}^{\operatorname{red}}(\tria)$ has also single-valued edge-gradient degrees of freedom \eqref{eq:EdgeDofs} and is thus an element in $Z_{3}^{\operatorname{sing}}(\tria) \subset H^2(\Omega)$.
\end{proof}

\begin{remark}[Polynomial inclusion]
The reduced singular Zienkiewicz space still contains all quadratic polynomials; that is, $\mathcal P_2(T)\subset Z_3^{\operatorname{red}}(T)$.
Indeed, if $p\in\mathcal P_2(T)$, then $\nabla p$ is affine on $T$.
\end{remark}

\section{Integration formula for rational functions}\label{sec:3d-rational-moments}

This section develops an exact integration formula for the rational monomials $R_\beta^\alpha$ defined in \eqref{eq:DefRatBubble}. This allows for an implementation of the singular Zienkiewicz element for the biharmonic problem in 3D following the implementation discussed for the 2D setting in \cite{DieningStornTscherpel24}. Throughout this section let $T=[v_0,v_1,v_2,v_3]\subset\mathbb R^3$ be a tetrahedron with vertices $v_0,\dots,v_3$ and associated barycentric coordinates $\lambda_0,\dots,\lambda_3$. 
We aim at computing the normalized moment
\begin{align*}
\mathcal M_\beta^\alpha(T)\coloneqq \dashint_T R_\beta^\alpha(\lambda(x))\,\mathrm{d}x \coloneqq |T|^{-1}\int_T R_\beta^\alpha(\lambda(x))\,\mathrm{d}x.
\end{align*}
Note that an affine transformation yields the independence of $\mathcal M_\beta^\alpha(T)$ on $T$. We hence skip the argument $T$; that is, we write $\mathcal M_\beta^\alpha \coloneqq \mathcal M_\beta^\alpha(T)$.
In 2D the moment could be computed iteratively by exploiting relations of the type $(\lambda_0 + \lambda_1) = 1 - \lambda_2$ which allowed for the representation of the moment as the sum of computable moments \cite{DieningStornTscherpel24}. This section follows a similar strategy. The flowchart in Figure~\ref{fig:rational-moment-flowchart} at the end of this section illustrates the resulting routine.
\subsection{Reduction to triangular and star moments}
\label{subsec:direct-triangle-star-reduction}
Let $I = \lbrace 0,1,2,3\rbrace$. 
For pairwise distinct indices $a,b,c,d\in I$, let $e^\ast=[v_c,v_d]$ denote the edge opposite to $e=[v_a,v_b]$. Moreover, let $\fre_e \coloneqq \fre_{ab}\in\mathbb N_0^{\mathcal E_4}$ denote the unit multi-index corresponding to the edge $e$.
\begin{lemma}[Opposite-edge reduction]
\label{lem:opposite-edge-reduction}
Let $e,e^\ast\in\mathcal E_4$ be opposite edges. If $1 \leq \beta_e$ and $1 \leq \beta_{e^\ast}$, then
\begin{align*}
  \mathcal M_\beta^\alpha = \mathcal M_{\beta-\fre_e}^\alpha + \mathcal M_{\beta-\fre_{e^\ast}}^\alpha.
\end{align*}
If the moment $  \mathcal M_\beta^\alpha$ is finite, the moments $\mathcal M_{\beta-\fre_e}^\alpha$  and $\mathcal M_{\beta-\fre_{e^\ast}}^\alpha$ are finite as well. 
\end{lemma}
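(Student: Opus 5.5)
The identity rests on the partition of unity $\lambda_a+\lambda_b+\lambda_c+\lambda_d=1$ on $T$. With $e=[v_a,v_b]$ and $e^\ast=[v_c,v_d]$ this reads $(\lambda_a+\lambda_b)+(\lambda_c+\lambda_d)=1$. Multiplying $R_\beta^\alpha$ by this expression and using the multiplication rule of Lemma~\ref{lem:Multi} termwise gives, pointwise on the interior of $T$,
\begin{align*}
R_\beta^\alpha = (\lambda_a+\lambda_b)R_\beta^\alpha + (\lambda_c+\lambda_d)R_\beta^\alpha = R_{\beta-\fre_e}^\alpha + R_{\beta-\fre_{e^\ast}}^\alpha.
\end{align*}
This is legitimate because $\beta_e\ge1$ and $\beta_{e^\ast}\ge1$, so one factor of $(\lambda_a+\lambda_b)$, respectively $(\lambda_c+\lambda_d)$, cancels against the denominator. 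The cancellation is valid wherever the denominators are nonzero, which holds in the interior of $T$, i.e.\ almost everywhere. Averaging over $T$ and using linearity of the integral yields the moment identity, provided the three integrals are finite.

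For finiteness I would use positivity rather than Lemma~\ref{lem:Regularity}. All barycentric coordinates are nonnegative on $T$, so all three rational functions $R_\beta^\alpha$, $R^\alpha_{\beta-\fre_e}$ and $R^\alpha_{\beta-\fre_{e^\ast}}$ are nonnegative almost everywhere. The pointwise identity then gives $0\le R^\alpha_{\beta-\fre_e}\le R_\beta^\alpha$ and $0\le R^\alpha_{\beta-\fre_{e^\ast}}\le R_\beta^\alpha$ almost everywhere. Hence, if $\mathcal M_\beta^\alpha<\infty$, both reduced moments are finite by monotonicity of the integral.

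Conversely, the identity holds in $[0,\infty]$ even without any finiteness assumption, by additivity of the Lebesgue integral for nonnegative functions. The integrability condition of Lemma~\ref{lem:Regularity} could serve as a consistency check: lowering $\beta$ only decreases $B_S(\beta)$, so condition \eqref{eq:CondIntegr} with $m=0$, $p=1$ is preserved. However, the positivity argument is simpler and does not require it.

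There is no serious obstacle. The only points to handle carefully are that the cancellation holds merely almost everywhere, since the denominators vanish on lower-dimensional subsimplices, and that nonnegativity justifies splitting the integral even a priori.
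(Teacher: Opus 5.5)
Your proof is correct and follows the paper's argument: the identity comes from multiplying $R_\beta^\alpha$ by $(\lambda_a+\lambda_b)+(\lambda_c+\lambda_d)=1$ and cancelling one denominator factor in each term. For the finiteness claim, the paper only remarks that lowering $\beta_e$ and $\beta_{e^\ast}$ reduces the singularity, which implicitly appeals to the regularity criterion. Your pointwise domination $0\le R^\alpha_{\beta-\fre_e}\le R_\beta^\alpha$ and $0\le R^\alpha_{\beta-\fre_{e^\ast}}\le R_\beta^\alpha$, obtained from nonnegativity and the identity itself, makes that step rigorous more directly.
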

\begin{proof}
Let $e = [a,b]$ and  $e^\ast= [c,d]$ be opposite edges and set $s_e\coloneqq \lambda_a+\lambda_b$ and $s_{e^\ast} \coloneqq \lambda_c + \lambda_d$.
Since $s_e+s_{e^\ast}=1$, we obtain
\begin{align*}
  R_\beta^\alpha = (s_e+s_{e^\ast})R_\beta^\alpha = R_{\beta-\fre_e}^\alpha + R_{\beta-\fre_{e^\ast}}^\alpha .
\end{align*}
Integration over $T$ proves the identity. Boundedness of the moments on the right-hand side follows by the fact that reducing the power $\beta_e$ and $\beta_{e^\ast}$ reduces the singularities of rational functions.
\end{proof}
The formula in the lemma allows us to represent the moment as the sum of moments with smaller exponents in the denominators. This process can be repeated iteratively until we reach the following configuration. Its definition involves the notion of the support of a multi-index $\beta \in \mathbb{N}_0^{\mathcal{E}_4}$, which is the set of indices 
\begin{align*}
\operatorname{supp}(\beta) \coloneqq \lbrace ab \in \mathcal{E}_4 \colon \beta_{ab} > 0 \rbrace.
\end{align*} 
\begin{definition}[Reduced multi-index] 
We call $\beta \in \mathbb{N}_0^{\mathcal{E}_4}$ reduced if no two opposite edges occur simultaneously in its support; that is, for $\lbrace a,b,c,d\rbrace = \lbrace 0,1,2,3\rbrace$ 
\begin{align*}
ab \in\operatorname{supp}(\beta)\qquad \text{implies}\qquad  cd \not\in \operatorname{supp}(\beta).
\end{align*}
\end{definition}
Applying Lemma~\ref{lem:opposite-edge-reduction} iteratively allows us to write each finite moment $\mathcal M_\beta^\alpha$ is a finite linear combination of finite moments $\mathcal M_{\beta'}^\alpha$ with reduced multi-indices $\beta' \in \mathbb{N}_0^{\mathcal{E}_4}$.
It remains to consider the special case of reduced multi-indices $\beta \in \mathbb{N}_0^{\mathcal{E}_4}$.

\begin{lemma}[Reduced denominator graphs]\label{lem:reduced-index} 
Let $\beta\in\mathbb N_0^{\mathcal E_4}$ be reduced. Then there exist
pairwise distinct indices $\lbrace a,b,c,d \rbrace = \lbrace 0, 1, 2, 3\rbrace$ such that
\begin{align*}
  \operatorname{supp}(\beta) \subset\lbrace ab,bc,ca\rbrace
\qquad \text{or}\qquad  \operatorname{supp}(\beta)
  \subset\lbrace ab,ac,ad\rbrace.
\end{align*}
We call the first case ``one face'' and the second one ``vertex star''.
\end{lemma}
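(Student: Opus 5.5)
The statement is a purely combinatorial fact about subsets of the edge set of the complete graph $K_4$ on the vertex set $I=\lbrace 0,1,2,3\rbrace$. I would view $\operatorname{supp}(\beta)$ as a set $G\subset\mathcal E_4$ of edges of $K_4$. The reducedness assumption then says exactly that $G$ contains no perfect matching of $K_4$; that is, no two disjoint edges. The claim becomes: every edge set without two disjoint edges is contained in a triangle $\lbrace ab,bc,ca\rbrace$ or in a star $\lbrace ab,ac,ad\rbrace$. Equivalently, $G$ is an intersecting family of $2$-element subsets of a $4$-element set.

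I would proceed by a case distinction on $|G|$.

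If $G=\emptyset$ or $|G|=1$, the statement is trivial: choose any labeling with $ab$ equal to the single edge, if present.

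If $|G|\geq 2$, any two edges of $G$ must share a vertex, since otherwise they would be disjoint. For four vertices, disjoint edges are automatically opposite, so this is exactly reducedness. Take two edges of $G$; they share a vertex, say $a$, so they are $ab$ and $ac$. Any further edge $g\in G$ must intersect both $ab$ and $ac$. Either $g$ contains $a$, so $g\in\lbrace ab,ac,ad\rbrace$, or $g$ avoids $a$ and then must contain both $b$ and $c$, so $g=bc$.

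It remains to check that $G$ cannot contain both $ad$ and $bc$. These two edges are opposite, which is forbidden by reducedness. Hence either $G\subset\lbrace ab,ac,ad\rbrace$ (vertex star centered at $a$) or $G\subset\lbrace ab,bc,ca\rbrace$ (one face, namely $f_d$).

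There is no real obstacle here. The only point needing care is the observation that in $K_4$ two edges are disjoint precisely when they are opposite. That is what turns the "reduced" hypothesis into the intersecting-family property, and it also excludes the mixed configuration containing both $ad$ and $bc$.
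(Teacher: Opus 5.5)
Your proof is correct and complete. The paper argues slightly differently. It first uses a pigeonhole observation to get $|\operatorname{supp}(\beta)|\leq 3$: the six edges of $K_4$ split into three pairs of opposite edges, so four or more edges must contain an opposite pair. It then simply asserts that every remaining reduced configuration is a triangle or a star.

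Your route skips the cardinality bound and derives the structure directly from reducedness, read as ``any two edges of the support intersect.'' Fixing two edges $ab,ac$ forces every further edge into $\lbrace ad,bc\rbrace$, and these two are opposite. This makes explicit the enumeration the paper leaves to the reader. In particular, it rules out three-edge configurations such as the path $\lbrace ab,bc,cd\rbrace$, whose end edges are opposite. The paper's version is shorter but relies on that unstated case check, so yours is the more self-contained of the two.
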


\begin{proof}
If the support of $\beta$ contains more than three elements, it must contain two opposite edges. Hence, $\operatorname{supp}(\beta)$ has less than four elements. All possible reduced cases are then covered by the one face and vertex star.
\end{proof}

\subsubsection{One face}
To investigate the one face case, we set the 2D reference simplex
\begin{align*}
  T_2\coloneqq\lbrace \eta=(\eta_1,\eta_2)\in\mathbb R^2\colon 0\leq  \eta_1,\ 0 \leq \eta_2,\ \eta_1+\eta_2 \leq 1 \rbrace .
\end{align*}
For an ordered triple of distinct indices $a,b,c\in\lbrace 0,1,2,3\rbrace$, we define the barycentric coordinates
\begin{align*}
  \mu_a(\eta)\coloneqq1-\eta_1-\eta_2,\qquad  \mu_b(\eta)\coloneqq\eta_1,\qquad \mu_c(\eta)\coloneqq\eta_2.
\end{align*}
For $\gamma=(\gamma_a,\gamma_b,\gamma_c)\in\mathbb N_0^3$ and $\beta_F=(\beta_{ab},\beta_{ac},\beta_{bc})\in\mathbb N_0^3$ with face $F = [v_a,v_b,v_c]$, define the normalized two-dimensional moment
\begin{align*}
  \mathcal I_{\beta_F}^{\gamma} &\coloneqq  \dashint_{T_2}\frac{\mu_a(\eta)^{\gamma_a}\mu_b(\eta)^{\gamma_b}\mu_c(\eta)^{\gamma_c}}{(\mu_a(\eta)+\mu_b(\eta))^{\beta_{ab}}(\mu_a(\eta)+\mu_c(\eta))^{\beta_{ac}}(\mu_b(\eta)+\mu_c(\eta))^{\beta_{bc}}}  \,\mathrm d\eta \\
  &\, = \dashint_{T_2}\frac{\mu_a(\eta)^{\gamma_a}\mu_b(\eta)^{\gamma_b}\mu_c(\eta)^{\gamma_c}}{(1-\mu_c(\eta))^{\beta_{ab}}(1-\mu_b(\eta))^{\beta_{ac}}(1-\mu_a(\eta))^{\beta_{bc}}}  \,\mathrm d\eta.
\end{align*}
This moment can be evaluated exactly with the algorithm in \cite[Algo.~2]{DieningStornTscherpel24}.
\begin{lemma}[One face]
\label{lem:triangular-moments}
Let $\alpha \in \mathbb{N}_0^4$ and let $\beta \in \mathbb{N}_0^{\mathcal{E}_4}$ with $\operatorname{supp}\beta\subset\lbrace ab,ac,bc\rbrace$ for three distinct indices $a,b,c\in\lbrace0,1,2,3\rbrace$. Let $d$ be a remaining index and set
\begin{align*}
  \gamma\coloneqq(\alpha_a,\alpha_b,\alpha_c), \qquad  |\gamma|\coloneqq\alpha_a+\alpha_b+\alpha_c, \qquad  |\beta_F|\coloneqq\beta_{ab}+\beta_{ac}+\beta_{bc}.
\end{align*}
If $\mathcal M_\beta^\alpha$ is finite, then $|\beta_F|\leq |\gamma|+2$ and we have
\begin{align*}
  \mathcal M_\beta^\alpha = 3\, \frac{(|\gamma|-|\beta_F|+2)!\,\alpha_d!}{(|\gamma|-|\beta_F|+\alpha_d+3)!}\,  \mathcal I_{\beta_F}^{\gamma}.
\end{align*}
\end{lemma}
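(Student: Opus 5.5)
We will compute $\mathcal M_\beta^\alpha$ in reference coordinates and split off the direction transverse to the face $F=[v_a,v_b,v_c]$. By affine invariance we take $T$ to be the reference tetrahedron with $|T|=1/6$, and set $t\coloneqq 1-\lambda_d=\lambda_a+\lambda_b+\lambda_c\in[0,1]$. We parametrize the slice $\{\lambda_d=1-t\}$ by $\lambda_a=t\mu_a(\eta)$, $\lambda_b=t\mu_b(\eta)$, $\lambda_c=t\mu_c(\eta)$ with $\eta\in T_2$. Concretely, we use the map $(t,\eta)\mapsto x$ with $\lambda_b=t\eta_1$, $\lambda_c=t\eta_2$, $\lambda_d=1-t$. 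Its Jacobian determinant is $t^2$, up to sign. This is a standard Duffy/collapsed-coordinate map onto the tetrahedron.

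Next we exploit homogeneity. Since $\operatorname{supp}\beta\subset\{ab,ac,bc\}$, the denominator involves only $\lambda_a,\lambda_b,\lambda_c$. The map $(\lambda_a,\lambda_b,\lambda_c)\mapsto \lambda_a^{\alpha_a}\lambda_b^{\alpha_b}\lambda_c^{\alpha_c}\prod(\lambda_i+\lambda_j)^{-\beta_{ij}}$ is homogeneous of degree $|\gamma|-|\beta_F|$. Hence
\begin{align*}
R_\beta^\alpha = (1-t)^{\alpha_d}\, t^{|\gamma|-|\beta_F|}\, g(\eta),
\end{align*}
where $g$ is exactly the integrand of $\mathcal I^\gamma_{\beta_F}$. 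By Tonelli (all factors are nonnegative), the integral factorizes:
\begin{align*}
\int_T R_\beta^\alpha\dx = \int_0^1 t^{|\gamma|-|\beta_F|+2}(1-t)^{\alpha_d}\dt \cdot \int_{T_2} g\,\mathrm d\eta .
\end{align*}
The second factor equals $\tfrac12\,\mathcal I^\gamma_{\beta_F}$, because $|T_2|=1/2$.

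Finiteness of $\mathcal M^\alpha_\beta$ forces the $t$-integral to converge, since $\int_{T_2} g>0$. Convergence requires $|\gamma|-|\beta_F|+2>-1$, that is, $|\beta_F|\leq|\gamma|+2$, because the exponents are integers. The same factorization shows that $\mathcal I^\gamma_{\beta_F}$ is then finite. The $t$-integral is a Beta integral. With $k\coloneqq|\gamma|-|\beta_F|+2\ge 0$ it gives
\begin{align*}
\int_0^1 t^k(1-t)^{\alpha_d}\dt = \frac{k!\,\alpha_d!}{(k+\alpha_d+1)!}.
\end{align*}

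Finally we normalize. Dividing $\tfrac12\cdot\frac{k!\alpha_d!}{(k+\alpha_d+1)!}\,\mathcal I^\gamma_{\beta_F}$ by $|T|=1/6$ gives the prefactor $3$. Since $k+\alpha_d+1=|\gamma|-|\beta_F|+\alpha_d+3$, this yields the claimed formula. The only delicate points are the Jacobian $t^2$ and the bookkeeping of $\mu_a,\mu_b,\mu_c$ against the ordering in $\mathcal I^\gamma_{\beta_F}$. Both are routine, so we expect no genuine obstacle beyond justifying the factorization for a nonnegative integrand.
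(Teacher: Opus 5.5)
Your proof is correct and follows essentially the paper's route. Both use the collapsed parametrization $\lambda_i=\rho\,\mu_i(\eta)$ for $i\in\lbrace a,b,c\rbrace$ and $\lambda_d=1-\rho$, with Jacobian proportional to $\rho^2$, then homogeneity of degree $|\gamma|-|\beta_F|$, the factorization, and a Beta integral. The only difference is how the bound is obtained. You read off $|\beta_F|\le|\gamma|+2$, and the finiteness of $\mathcal I_{\beta_F}^{\gamma}$, directly from the Tonelli factorization. The paper instead derives the bound beforehand from the regularity criterion in Lemma~\ref{lem:Regularity} with $S=\lbrace a,b,c\rbrace$, $m=0$, $p=1$, so your version is slightly more self-contained.
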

\begin{proof}
Let the moment $\mathcal M_\beta^\alpha$ be finite with distinct vertices  $a,b,c\in\lbrace0,1,2,3\rbrace$ satisfying $\operatorname{supp}\beta\subset\lbrace ab,ac,bc\rbrace$.

\textit{Step 1 (Bound for $|\beta_F|$).} 
If $|\beta_F| =0$, the bound $|\beta_F| \leq |\gamma|+2$ follows trivially. If $0< |\beta_F|$, we apply Lemma~\ref{lem:Regularity} with $m=0$ and $p=1$ and set $S\coloneqq\lbrace a,b,c\rbrace$.
Then $\operatorname{supp}\beta\subset\lbrace ab,ac,bc\rbrace$ yields
\begin{align*}
  A_S(\alpha)=\alpha_a+\alpha_b+\alpha_c=|\gamma|\qquad \text{and} \qquad B_S(\beta)=\beta_{ab}+\beta_{ac}+\beta_{bc}=|\beta_F|.
\end{align*}
The regularity criterion gives
\begin{align*}
  -|S| < A_S(\alpha)-B_S(\beta)\qquad\text{which equals}\qquad  -3 < |\gamma|-|\beta_F|.
\end{align*}
Since the quantity $|\gamma|-|\beta_F|$ is an integer, we obtain
\begin{align*}
  |\beta_F|\leq |\gamma|+2.
\end{align*}

\textit{Step 2 (Identity).}
Set the affine parametrization  $y_F\colon T_2 \to F\coloneqq[v_a,v_b,v_c]$ of the face $F$ over $T_2$ with
\begin{align*}
y_F(\eta)\coloneqq\mu_a(\eta)v_a+\mu_b(\eta)v_b+\mu_c(\eta)v_c\qquad\text{for all }\eta \in T_2.
\end{align*}
We parametrize the simplex $T$ by $\Phi\colon (0,1)\times T_2 \to T$ with 
\begin{align*}
 \Phi(\rho,\eta)\coloneqq (1-\rho)v_d + \rho y_F(\eta)\qquad\text{for all }(\rho,\eta)\in (0,1)\times T_2.
\end{align*}
The barycentric coordinates of any $x=\Phi(\rho,\eta) \in T$ read
\begin{align*}
  \lambda_a(x)=\rho\mu_a(\eta),\qquad
  \lambda_b(x)=\rho\mu_b(\eta),\qquad
  \lambda_c(x)=\rho\mu_c(\eta),\qquad
  \lambda_d(x)=1-\rho.
\end{align*}
Moreover, we have for all $i,j\in\lbrace a,b,c\rbrace$ the identity
\begin{align*}
  \lambda_i(x)+\lambda_j(x)=\rho(\mu_i(\eta)+\mu_j(\eta)).
\end{align*}
A direct calculation (or arguing that $|\lbrace \Phi(\rho,\eta) \colon \eta \in T_2\rbrace| = |F| \rho^2$ for the two-dimensional slice) shows that the determinant of the Jacobian of $\Phi$ equals
\begin{align*}
  |\det D\Phi(\rho,\eta)|=6|T|\rho^2.
\end{align*}
The change of variables formula and the assumption $\operatorname{supp}\beta\subset\lbrace ab,ac,bc\rbrace$ thus yield
\begin{align*}
  \mathcal M_\beta^\alpha   &= 3\int_0^1 \rho^{|\gamma|-|\beta_F|+2}(1-\rho)^{\alpha_d} \,\mathrm d\rho\, \mathcal I_{\beta_F}^{\gamma}.
  \end{align*}
Since we have $0 \leq |\gamma|-|\beta_F|+2$ for integrable moments $\mathcal{M}_\beta^\alpha$, the exponent is non-negative and we can apply the classical formula for the integration of barycentric coordinates, resulting in
\begin{align*}
\int_0^1 \rho^{|\gamma|-|\beta_F|+2}(1-\rho)^{\alpha_d} \,\mathrm d\rho =   \frac{(|\gamma|-|\beta_F|+2)!\,\alpha_d!}{(|\gamma|-|\beta_F|+\alpha_d+3)!}.
\end{align*}
Combining the identities concludes the proof.
\end{proof}

\subsubsection{Vertex-star}
It remains to discuss the vertex-star situation. By relabeling it suffices to consider a star centered at the vertex $v_0$.

\begin{definition}[Star moments]
\label{def:star-moments}
For $\alpha\in\mathbb N_0^4$ and $\gamma=(\gamma_1,\gamma_2,\gamma_3)\in\mathbb N_0^3$, define
\begin{align*}
  \mathcal S^\alpha_\gamma \coloneqq \dashint_T \frac{\lambda_0^{\alpha_0}\lambda_1^{\alpha_1}\lambda_2^{\alpha_2}\lambda_3^{\alpha_3}} {(\lambda_0+\lambda_1)^{\gamma_1}(\lambda_0+\lambda_2)^{\gamma_2}(\lambda_0+\lambda_3)^{\gamma_3}} \dx.
\end{align*}
\end{definition}

The regularity criterion in Lemma~\ref{lem:Regularity} specializes as follows.
\begin{lemma}[Integrability]\label{lem:integrability}
The moment $\mathcal S^\alpha_\gamma$ is finite if and only if
\begin{align}
  \gamma_i&\leq \alpha_0+\alpha_i+1 &&\text{for all }i=1,2,3,
  \label{eq:star-single-criterion}\\
  \gamma_i+\gamma_j&\leq \alpha_0+\alpha_i+\alpha_j+2 &&\text{for all }1\leq i<j\leq3 .
  \label{eq:star-pair-criterion}
\end{align}
\end{lemma}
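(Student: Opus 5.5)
The plan is to apply the regularity criterion of Lemma~\ref{lem:Regularity} with $m=0$ and $p=1$. Finiteness of $\mathcal S^\alpha_\gamma$ is equivalent to $R^\alpha_\beta\in L^1(T)=W^{0,1}(T)$, because $R^\alpha_\beta\geq 0$ on $T$. Here $\beta=\gamma_1\fre_{01}+\gamma_2\fre_{02}+\gamma_3\fre_{03}$. The criterion then states that $\mathcal S^\alpha_\gamma<\infty$ if and only if
\begin{align*}
-|S|<A_S(\alpha)-B_S(\beta)
\end{align*}
holds for every $S\subset\lbrace0,1,2,3\rbrace$ with $2\leq|S|\leq3$ and $B_S(\beta)>0$. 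Since the difference is an integer, this is the same as $B_S(\beta)\leq A_S(\alpha)+|S|-1$. What remains is to list the relevant sets $S$ and compute $A_S$ and $B_S$ for each.

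First I handle the case $|S|=2$. Because $\operatorname{supp}\beta\subset\lbrace 01,02,03\rbrace$, we can have $B_S(\beta)>0$ only for $S=\lbrace0,i\rbrace$ with $i\in\lbrace1,2,3\rbrace$, and in that case $B_S=\gamma_i$ and $A_S=\alpha_0+\alpha_i$. The condition therefore reads $\gamma_i\leq\alpha_0+\alpha_i+1$, which is \eqref{eq:star-single-criterion}. If $\gamma_i=0$, the set is excluded from the criterion, but \eqref{eq:star-single-criterion} holds trivially, so nothing is lost.

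Next the case $|S|=3$. If $0\notin S$, that is $S=\lbrace1,2,3\rbrace$, then $B_S=0$ and the set is irrelevant. If $S=\lbrace0,i,j\rbrace$, then $B_S=\gamma_i+\gamma_j$ and $A_S=\alpha_0+\alpha_i+\alpha_j$, and the condition is $\gamma_i+\gamma_j\leq\alpha_0+\alpha_i+\alpha_j+2$, which is \eqref{eq:star-pair-criterion}. This again holds trivially when $\gamma_i+\gamma_j=0$. Collecting the cases gives both directions of the equivalence.

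The only delicate point is the reduction from finiteness of the moment to the $W^{0,1}$ statement, together with the convention for sets where $B_S=0$. The first relies on the nonnegativity of the integrand. For the second, I will check explicitly that the stated inequalities are automatically satisfied whenever the corresponding $B_S$ vanishes, so that the "if and only if" is not affected by the restriction $B_S(\beta)>0$ in Lemma~\ref{lem:Regularity}.
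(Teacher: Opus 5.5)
Your proposal is correct and follows the paper's proof essentially verbatim. Both apply Lemma~\ref{lem:Regularity} with $m=0$ and $p=1$, use integrality to rewrite the condition as $B_S(\beta)\leq A_S(\alpha)+|S|-1$, and read off \eqref{eq:star-single-criterion} from $S=\lbrace0,i\rbrace$ and \eqref{eq:star-pair-criterion} from $S=\lbrace0,i,j\rbrace$. Your extra checks fill in details the paper leaves implicit: that the integrand is nonnegative, that the sets with $B_S(\beta)=0$ are irrelevant, and that the stated inequalities hold trivially in those cases.
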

\begin{proof}
We write the star denominator in the notation of Lemma~\ref{lem:Regularity} by setting
\begin{align*}
  \beta_{01}\coloneqq\gamma_1, \qquad \beta_{02}\coloneqq\gamma_2, \qquad \beta_{03}\coloneqq\gamma_3.
\end{align*}
All remaining denominator exponents equal zero. We apply Lemma~\ref{lem:Regularity} with $m=0$ and $p=1$.
The criterion states that, for every $S\subset\lbrace0,1,2,3\rbrace$ with $2\leq |S|\leq3$ and $B_S(\beta)>0$, there holds
\begin{align*}
  -|S|<A_S(\alpha)-B_S(\beta).
\end{align*}
Since $A_S(\alpha)-B_S(\beta)$ is an integer, this is equivalent to
\begin{align*}
  B_S(\beta)\leq A_S(\alpha)+|S|-1.
\end{align*}
Considering the case $S=\lbrace 0,i \rbrace$ with $i=1,2,3$ then yields \eqref{eq:star-single-criterion}. Considering $S=\lbrace 0,i,j \rbrace$ with $1\leq i<j \leq 3$ yields \eqref{eq:star-pair-criterion}.
\end{proof}
We next introduce admissible reductions for star moments, where the word admissible refers to the fact  that every moment generated by the reduction of a finite moment is again finite. We refer to an inequality in \eqref{eq:star-single-criterion} or \eqref{eq:star-pair-criterion} as critical, if the inequality holds with equality.
\begin{lemma}[Star reduction]\label{lem:simple-admissible-star-reduction}
Let $\mathcal S^\alpha_\gamma$ be finite and assume that $1 \leq \alpha_0$. 
Suppose that there exists an index $k\in\lbrace1,2,3\rbrace$ with $1 \leq \gamma_k$ such that every critical inequality in \eqref{eq:star-single-criterion}--\eqref{eq:star-pair-criterion} involves the index $k$.
Then we have 
\begin{align*}
  \mathcal S^\alpha_\gamma = \mathcal S^{\alpha-\fre_0}_{\gamma-\fre_k} - \mathcal S^{\alpha-\fre_0+\fre_k}_{\gamma}.
\end{align*}
Both moments on the right-hand side are finite.
\end{lemma}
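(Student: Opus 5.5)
The identity comes from the algebraic relation $\lambda_0=(\lambda_0+\lambda_k)-\lambda_k$. Since $\alpha_0\geq1$, we factor out one power of $\lambda_0$ from the numerator of the integrand of $\mathcal S^\alpha_\gamma$ and replace it by this difference. The term $(\lambda_0+\lambda_k)$ cancels one power of the denominator factor $(\lambda_0+\lambda_k)^{\gamma_k}$, which is possible because $\gamma_k\geq1$. This gives the pointwise identity
\begin{align*}
\frac{\lambda^\alpha}{\prod_{i=1}^3(\lambda_0+\lambda_i)^{\gamma_i}}
=\frac{\lambda^{\alpha-\fre_0}}{\prod_{i=1}^3(\lambda_0+\lambda_i)^{(\gamma-\fre_k)_i}}-\frac{\lambda^{\alpha-\fre_0+\fre_k}}{\prod_{i=1}^3(\lambda_0+\lambda_i)^{\gamma_i}}
\end{align*}
on the interior of $T$. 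Averaging over $T$ yields the claimed formula, provided both right-hand moments are finite; otherwise the splitting of the integral is not justified. So the real content is the finiteness statement, which I verify with Lemma~\ref{lem:integrability}.

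\emph{Second moment} $\mathcal S^{\alpha-\fre_0+\fre_k}_\gamma$. Finiteness is in fact immediate: on $T$ we have $\lambda_k\leq\lambda_0+\lambda_k$, so its integrand is nonnegative and bounded by $\lambda_k/(\lambda_0+\lambda_k)\leq1$ times the integrand of $\mathcal S^\alpha_{\gamma-\fre_k}$. Hence it suffices to know that the first moment is finite. Alternatively, one checks directly that every inequality of \eqref{eq:star-single-criterion}--\eqref{eq:star-pair-criterion} involving $k$ keeps its right-hand side ($\alpha_0$ decreases by one, $\alpha_k$ increases by one), and every inequality not involving $k$ loses one on the right-hand side; since such inequalities are not critical by assumption, they stay valid.

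\emph{First moment} $\mathcal S^{\alpha-\fre_0}_{\gamma-\fre_k}$. In every inequality involving $k$, both the left-hand side (via $\gamma_k-1$) and the right-hand side (via $\alpha_0-1$) drop by one, so these remain valid. Every inequality not involving $k$ loses one on the right while its left-hand side is unchanged; because it was non-critical for $\mathcal S^\alpha_\gamma$ by hypothesis, the strict inequality with integer quantities survives the decrease by one.

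The only delicate point is this bookkeeping of which inequalities are affected. The hypothesis that every critical inequality involves $k$ is used precisely for those not involving $k$, which are the ones that lose a unit on the right-hand side. With both moments finite, linearity of the integral finishes the proof.
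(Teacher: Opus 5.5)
Your proof is correct and follows the paper's approach: the pointwise identity from $\lambda_0=(\lambda_0+\lambda_k)-\lambda_k$ combined with Lemma~\ref{lem:integrability}, with your bookkeeping (inequalities involving $k$ keep their slack, while those not involving $k$ lose one unit but were non-critical) filling in what the paper leaves as ``follows by the assumptions''. One small slip: in your domination argument the second integrand equals $\lambda_k/(\lambda_0+\lambda_k)$ times the integrand of $\mathcal S^{\alpha-\fre_0}_{\gamma-\fre_k}$, not of $\mathcal S^{\alpha}_{\gamma-\fre_k}$, as your following reference to ``the first moment'' indicates you intended.
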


\begin{proof}
Multiplying $\lambda_0=(\lambda_0+\lambda_k)-\lambda_k$ by $\lambda^{\alpha-\fre_0}$ and dividing by the star denominator gives the displayed formula after integration over $T$.
Integrability of the resulting moments then follows by the assumptions and Lemma~\ref{lem:integrability}.
\end{proof}

We cannot apply the reduction step in Lemma~\ref{lem:simple-admissible-star-reduction}, when the critical conditions cannot be hit by a single index.
This motivates the following decomposition. 

\begin{lemma}[Critical star reduction]
\label{lem:critical-star-reduction}
Let $\mathcal S^\alpha_\gamma$ be finite and assume that $1 \leq \alpha_0$. Suppose that all pair conditions \eqref{eq:star-pair-criterion} are critical; that is,
\begin{align*}
  \gamma_i+\gamma_j=\alpha_0+\alpha_i+\alpha_j+2
  \qquad\text{for all }1\leq i<j\leq3 .
\end{align*}
Then we have the identity
\begin{align*}
  \mathcal S^\alpha_\gamma &=
  \frac13\sum_{1\leq i<j\leq 3}\mathcal S^{\alpha-\fre_0}_{\gamma-\fre_i-\fre_j}
  +\frac16\sum_{\substack{1 \leq i,j\leq 3\\ i\neq j}} \mathcal S^{\alpha-\fre_0+\fre_i}_{\gamma-\fre_j} -\frac23\sum_{1 \leq i<j \leq 3}  \mathcal S^{\alpha-\fre_0+\fre_i+\fre_j}_{\gamma}.
\end{align*}
All moments on the right-hand side are finite.
\end{lemma}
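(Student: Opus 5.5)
The plan is to establish a pointwise polynomial identity in the barycentric coordinates, multiply it by the integrand, and integrate. Then I would check finiteness of every generated moment with Lemma~\ref{lem:integrability}. The main obstacle is this finiteness check, because the identity itself is just an algebraic verification.

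\textit{Step 1 (Algebraic identity).} Write $s_i\coloneqq\lambda_0+\lambda_i$ for $i=1,2,3$ and $\Sigma\coloneqq\lambda_1+\lambda_2+\lambda_3$. Dividing the identity by the star denominator turns its three sums into the three sums of the assertion:
\begin{align*}
\lambda_0=\frac13\sum_{1\leq i<j\leq3}s_is_j+\frac16\sum_{i\neq j}\lambda_is_j-\frac23\sum_{1\leq i<j\leq3}\lambda_i\lambda_j .
\end{align*}
Indeed, a factor $s_is_j$ cancels $\fre_i+\fre_j$ from $\gamma$, a factor $\lambda_is_j$ adds $\fre_i$ to the numerator exponent and removes $\fre_j$ from $\gamma$, and $\lambda_i\lambda_j$ adds $\fre_i+\fre_j$. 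To verify the identity I expand $s_is_j=\lambda_0^2+\lambda_0(\lambda_i+\lambda_j)+\lambda_i\lambda_j$. The first sum then gives $\lambda_0^2+\tfrac23\lambda_0\Sigma+\tfrac13\sum_{i<j}\lambda_i\lambda_j$. The second sum gives $\tfrac13\lambda_0\Sigma+\tfrac13\sum_{i<j}\lambda_i\lambda_j$. The mixed quadratic terms cancel against the third sum, leaving $\lambda_0(\lambda_0+\Sigma)=\lambda_0$ because the barycentric coordinates sum to one. Multiplying by $\lambda^{\alpha-\fre_0}$, dividing by the star denominator, and integrating yields the formula, provided all terms are finite.

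\textit{Step 2 (Consequence of criticality).} For distinct $i,j,k\in\lbrace1,2,3\rbrace$, I add the critical pair equations for $(i,k)$ and $(j,k)$ and subtract the one for $(i,j)$. This gives $2\gamma_k=\alpha_0+2\alpha_k+2$. Hence $\alpha_0$ is even, so $\alpha_0\geq2$ because $\alpha_0\geq1$. It also gives $\gamma_k=\alpha_0/2+\alpha_k+1\leq\alpha_0+\alpha_k$. Thus every single condition \eqref{eq:star-single-criterion} holds with slack at least one, which is exactly what absorbs the loss $\alpha_0\mapsto\alpha_0-1$.

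\textit{Step 3 (Finiteness check).} I verify \eqref{eq:star-single-criterion}--\eqref{eq:star-pair-criterion} for each of the three types of moments using Lemma~\ref{lem:integrability}.
\begin{itemize}
\item For $\mathcal S^{\alpha-\fre_0}_{\gamma-\fre_i-\fre_j}$: the pair $(i,j)$ loses $2$ on the left and $1$ on the right. The pairs $(i,k)$ and $(j,k)$ lose $1$ on both sides, so they stay critical. The singles $i,j$ lose one on both sides, and the single $k$ uses the slack from Step 2.
\item For $\mathcal S^{\alpha-\fre_0+\fre_i}_{\gamma-\fre_j}$: pairs containing $j$ lose $1$ on the left and at least $0$ on the right. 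The pair $(i,k)$ is unchanged on both sides. The singles are fine by Step 2.
\item For $\mathcal S^{\alpha-\fre_0+\fre_i+\fre_j}_{\gamma}$: the pair $(i,j)$ gains $1$ on the right, and the pairs $(i,k)$ and $(j,k)$ are unchanged. The singles $i,j$ are unchanged on balance, and the single $k$ again uses Step 2.
\end{itemize}
This bookkeeping is routine but must be done carefully. Its only non-obvious ingredient is the parity and slack argument of Step 2, which I expect to be the crux.
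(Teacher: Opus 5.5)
Your proposal is correct and follows the paper's proof essentially step for step: the same pointwise identity for $\lambda_0$ (you check it by expanding the right-hand side, while the paper splits $\lambda_0^2+\lambda_0 s$), the same derivation $2\gamma_k=\alpha_0+2\alpha_k+2$ from the three critical pair equations giving $\alpha_0\geq 2$ and slack in the single conditions, and the same compensation bookkeeping for the pair conditions. One wording slip: in your second bullet, the pairs containing $j$ lose \emph{at most} $1$ on the right (namely $0$ for $\lbrace i,j\rbrace$ and $1$ for $\lbrace j,k\rbrace$), and that upper bound is what keeps the inequality valid, so ``at least $0$'' points the bound in the wrong direction.
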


\begin{proof}
Set $s\coloneqq\lambda_1+\lambda_2+\lambda_3$ and $q\coloneqq\sum_{1\leq i<j\leq3}\lambda_i\lambda_j$.
The barycentric partition of unity gives $\lambda_0+s=1$ and consequently
\begin{align*}
  \lambda_0=\lambda_0(\lambda_0+s)=\lambda_0^2+\lambda_0s.
\end{align*}
We rewrite
\begin{align*}
  \lambda_0^2+\lambda_0s =\frac13\left(3\lambda_0^2+2\lambda_0s+q\right) +   \frac16\left(2\lambda_0s+2q\right) -\frac23 q.
\end{align*}
The first expression in parentheses satisfies
\begin{align*}
  3\lambda_0^2+2\lambda_0s+q = \sum_{1\leq i<j\leq3}(\lambda_0+\lambda_i)(\lambda_0+\lambda_j).
\end{align*}
The second expression satisfies
\begin{align*}
2\lambda_0s+2q =\sum_{\substack{1\leq i,j\leq3\\i\neq j}}\lambda_i(\lambda_0+\lambda_j).
\end{align*}
This yields the identity
\begin{align*}
  \lambda_0 = \frac13\sum_{1\leq i<j\leq 3}(\lambda_0+\lambda_i)(\lambda_0+\lambda_j) +\frac16\sum_{\substack{1\leq i,j\leq3 \\ i\neq j}}\lambda_i(\lambda_0+\lambda_j) -\frac23 \sum_{1\leq i<j \leq 3}\lambda_i\lambda_j.
\end{align*}
Multiplying by $\lambda^{\alpha-\fre_0}$ and dividing by the star denominator gives the stated formula.
In every term on the right-hand side the central numerator power is reduced by one. Since all pair conditions are critical, this loss must be compensated for every pair $\lbrace i,j\rbrace$. In the first sum the corresponding denominator powers are reduced. In the second sum either a denominator power is reduced or a leaf numerator power is increased. In the third sum the two leaf numerator powers are increased. Thus all pair conditions remain valid.
To verify the single conditions \eqref{eq:star-single-criterion}, we subtract the critical pair identities, leading to
\begin{align*}
  \gamma_i=\alpha_i+1+\frac{\alpha_0}{2} \in \mathbb{Z}\qquad\text{for all }i=1,2,3.
\end{align*}
This shows that $\alpha_0 \geq 1$ is even, implying $1+\alpha_0/2 \leq \alpha_0$ and consequently $\gamma_i\leq \alpha_0+\alpha_i$.
Hence, the new rational functions satisfy the single condition \eqref{eq:star-single-criterion}.
\end{proof}
The following lemma shows that we can apply Lemma~\ref{lem:simple-admissible-star-reduction} or Lemma~\ref{lem:critical-star-reduction} iteratively. 
The iteration continues until either $\alpha_0=0$ or $\gamma=0$. In the latter case, the denominator has disappeared and the moment is a polynomial moment.

\begin{lemma}[Reduction to boundary-star moments]
\label{lem:reduction-to-boundary-star}
If $\mathcal S^\alpha_\gamma$ is a finite moment with $1\leq \alpha_0$ and $\gamma_k \geq 1$ for some $k=1,2,3$, the assumptions of Lemma~\ref{lem:simple-admissible-star-reduction} or~\ref{lem:critical-star-reduction} are satisfied.
\end{lemma}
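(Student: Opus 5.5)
The plan is a case analysis on which inequalities in \eqref{eq:star-single-criterion}--\eqref{eq:star-pair-criterion} are critical. Throughout I use that $\mathcal S^\alpha_\gamma$ is finite, so Lemma~\ref{lem:integrability} gives all single and pair inequalities, together with $\alpha_0\geq 1$. Write $(S_i)$ for the single condition with index $i$ and $(P_{ij})$ for the pair condition with indices $i<j$. The goal is either to exhibit an index $k$ with $\gamma_k\geq1$ that is involved in every critical inequality, as Lemma~\ref{lem:simple-admissible-star-reduction} requires, or to show that all three pair conditions are critical, as Lemma~\ref{lem:critical-star-reduction} requires.

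\textbf{Step 1: at most one single condition is critical.} Suppose $(S_i)$ and $(S_j)$ are both critical for some $i\neq j$. Adding them gives
\begin{align*}
\gamma_i+\gamma_j=2\alpha_0+\alpha_i+\alpha_j+2>\alpha_0+\alpha_i+\alpha_j+2,
\end{align*}
since $\alpha_0\geq1$. This contradicts $(P_{ij})$.

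\textbf{Step 2: a critical single condition forces the choice of $k$.} Suppose $(S_i)$ is critical and let $j,l$ be the other two indices. Subtracting the critical $(S_i)$ from $(P_{ij})$ gives $\gamma_j\leq\alpha_j+1$. In the same way $(P_{il})$ gives $\gamma_l\leq\alpha_l+1$. Adding these two bounds,
\begin{align*}
\gamma_j+\gamma_l\leq\alpha_j+\alpha_l+2<\alpha_0+\alpha_j+\alpha_l+2,
\end{align*}
so $(P_{jl})$ is not critical. Hence every critical inequality involves $i$. Moreover $\gamma_i=\alpha_0+\alpha_i+1\geq 2$. So we choose $k=i$ and apply Lemma~\ref{lem:simple-admissible-star-reduction}.

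\textbf{Step 3: only pair conditions are critical.} Regard the critical pairs as the edges of a graph on the vertex set $\lbrace1,2,3\rbrace$ and split by the number of edges.
\begin{itemize}
\item No critical inequality. Take the index $k$ with $\gamma_k\geq1$ given by the hypothesis; Lemma~\ref{lem:simple-admissible-star-reduction} applies.
\item Exactly one critical pair $(P_{ij})$. Then $\gamma_i+\gamma_j=\alpha_0+\alpha_i+\alpha_j+2\geq3$, so one of $\gamma_i,\gamma_j$ is at least $1$. Choose that index as $k$.
\item Exactly two critical pairs. In a triangle any two edges share a vertex, so they are $(P_{ki})$ and $(P_{kj})$ for some $k$. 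We must check $\gamma_k\geq1$. If instead $\gamma_k=0$, the critical $(P_{ki})$ gives $\gamma_i=\alpha_0+\alpha_k+\alpha_i+2>\alpha_0+\alpha_i+1$, which violates $(S_i)$. So $\gamma_k\geq1$ and $k$ is the required index.
\item All three pairs critical. These are exactly the assumptions of Lemma~\ref{lem:critical-star-reduction}.
\end{itemize}

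The main obstacle I expect is the two-edge case together with Step 2, because there one has to confirm that the index which hits every critical inequality also has a positive denominator power. The key is to use the non-critical (but still valid) single conditions as upper bounds; this is exactly what rules out $\gamma_k=0$.
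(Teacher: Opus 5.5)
Your proposal is correct and follows the same case split as the paper's proof. A critical single condition forces every critical inequality to involve that index via the pair conditions; otherwise, fewer than three critical pair conditions share a common index, and three critical pairs is exactly the setting of Lemma~\ref{lem:critical-star-reduction}. You also check that the selected index satisfies $\gamma_k\geq1$, in the case with no critical inequality and, via the single conditions, in the two-critical-pairs case; the paper leaves this implicit, so your argument is slightly more complete.
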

\begin{proof}
Let $\mathcal S^\alpha_\gamma$ be a finite moment with $1\leq \alpha_0$.
Suppose the single condition \eqref{eq:star-single-criterion} is attained for $k=1$; that is,
\begin{align*}
\gamma_1 = \alpha_0 + \alpha_1 +1.
\end{align*} 
Then this identity and the pair condition \eqref{eq:star-pair-criterion} yield for $i  =2,3$ that
\begin{align*}
 \gamma_i \leq  \alpha_i + 1 < \alpha_0 + \alpha_i + 1.
\end{align*}
Hence, the single condition  is not critical for $i=2,3$. Moreover, the bound yields
\begin{align*}
\gamma_2 + \gamma_3\leq \alpha_2 + \alpha_3 + 2 < \alpha_0 + \alpha_2 + \alpha_3 + 2. 
\end{align*}
Hence, the pair condition is not critical for the pair $\lbrace 2,3\rbrace$, which shows that the assumptions of Lemma~\ref{lem:simple-admissible-star-reduction} are satisfied whenever a single condition \eqref{eq:star-single-criterion} is critical.
It remains to cover the cases where all single conditions are not critical. If we have strictly less than three critical pair conditions \eqref{eq:star-pair-criterion}, at least one index occurs in all of them, allowing for the application of Lemma~\ref{lem:simple-admissible-star-reduction}. The remaining case has three critical pair conditions, which is exactly the assumption in Lemma~\ref{lem:critical-star-reduction}.
\end{proof}
It remains to investigate case $\alpha_0 = 0$. 
\subsection{Star moment with $\alpha_0 = 0$}\label{subsec:boundary-star-routine}
This subsection develops an integration formula for moments with multi-indices $\bar\alpha=(\alpha_1,\alpha_2,\alpha_3)\in\mathbb N_0^3$ and $\gamma=(\gamma_1,\gamma_2,\gamma_3)\in\mathbb N_0^3$
\begin{align*}
  \mathcal B^{\bar\alpha}_{\gamma} \coloneqq S_\gamma^{(0,\alpha_1,\alpha_2,\alpha_3)}  \coloneqq  \dashint_T \frac{\lambda_1^{\alpha_1}\lambda_2^{\alpha_2}\lambda_3^{\alpha_3}}{(\lambda_0+\lambda_1)^{\gamma_1}(\lambda_0+\lambda_2)^{\gamma_2}(\lambda_0+\lambda_3)^{\gamma_3}} \dx.
\end{align*}
The integrability criterion in \eqref{eq:star-single-criterion} reads
\begin{align*}
  \gamma_j\leq \alpha_j+1 \qquad\text{for all }j=1,2,3.
\end{align*}
This yields the remaining pair conditions \eqref{eq:star-pair-criterion}. We define
\begin{align*}
  \delta_j\coloneqq\alpha_j+1-\gamma_j\in\mathbb N_0 \qquad\text{and}\qquad |\delta|\coloneqq\delta_1+\delta_2+\delta_3.
\end{align*}
We consider the case $1 \leq \gamma$, since if one of the $\gamma_j$ vanishes, the moment is triangular and already covered by Lemma~\ref{lem:triangular-moments}.

\begin{lemma}[Critical star descent]\label{lem:boundary-star-descent}
Let $\mathcal B^{\bar\alpha}_{\gamma}$ be finite and assume that $1 \leq \gamma$. Let $k\in\lbrace 1,2,3 \rbrace$ with $\delta_k>0$. Then we have the identity
\begin{align*}
  \mathcal B^{\bar\alpha}_{\gamma} = \frac{\alpha_k}{|\delta|}\,\mathcal B^{\bar\alpha-\fre_k}_{\gamma} - \frac{\gamma_k}{|\delta|}\, \mathcal B^{\bar\alpha}_{\gamma+\fre_k}.
\end{align*}
Both moments on the right-hand side are finite.
\end{lemma}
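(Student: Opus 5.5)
The plan is to derive the identity from the divergence theorem applied to a radial vector field centred at the vertex $v_k$. The homogeneity of the barycentric coordinates with respect to dilations around $v_k$ turns the divergence into a combination of the three moments in the statement. To avoid boundary terms along the singular edges, I will first regularize the denominators and pass to the limit at the end.

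\textit{Step 1 (Setup and finiteness).} Write $F\coloneqq \lambda_1^{\alpha_1}\lambda_2^{\alpha_2}\lambda_3^{\alpha_3}\prod_{i=1}^3(\lambda_0+\lambda_i)^{-\gamma_i}$.
\begin{itemize}
\item Since $\delta_k>0$ and $\gamma_k\geq 1$, we have $\alpha_k\geq\gamma_k\geq1$.
\item The moment $\mathcal B^{\bar\alpha-\fre_k}_\gamma$ has $k$-th defect $\delta_k-1\geq0$ and unchanged other defects. The same holds for $\mathcal B^{\bar\alpha}_{\gamma+\fre_k}$.
\item Hence, by the criterion $\gamma_j\leq\alpha_j+1$ stated before the lemma (Lemma~\ref{lem:integrability} with $\alpha_0=0$), both moments on the right-hand side are finite.
\end{itemize}

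\textit{Step 2 (Euler-type identity).} Consider the field $y(x)\coloneqq x-v_k$. Since the $\lambda_j$ are affine and $\lambda_j(v_k)=\delta_{jk}$, we have
\begin{align*}
y\cdot\nabla\lambda_j=\lambda_j-\delta_{jk}.
\end{align*}
For $\varepsilon>0$ define the regularized integrand $F_\varepsilon$ by replacing each factor $\lambda_0+\lambda_i$ in $F$ with $\lambda_0+\lambda_i+\varepsilon$. Then $F_\varepsilon$ is smooth on $T$. A direct computation gives
\begin{align*}
\operatorname{div}(yF_\varepsilon)=3F_\varepsilon+y\cdot\nabla F_\varepsilon
=\Big(3+|\bar\alpha|-|\gamma|\Big)F_\varepsilon-\alpha_k\frac{F_\varepsilon}{\lambda_k}+\gamma_k\frac{F_\varepsilon}{\lambda_0+\lambda_k+\varepsilon}+\sum_{i=1}^3\gamma_i\frac{\varepsilon F_\varepsilon}{\lambda_0+\lambda_i+\varepsilon}.
\end{align*}
Two observations simplify this.
\begin{itemize}
\item $3+|\bar\alpha|-|\gamma|=|\delta|$.
\item On the three faces through $v_k$ the field $y$ is tangential. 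On the opposite face $f_k$ we have $F_\varepsilon=0$ because $\alpha_k\geq1$.
\end{itemize}
Therefore the divergence theorem gives $\int_T\operatorname{div}(yF_\varepsilon)\dx=0$.

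\textit{Step 3 (Limit $\varepsilon\to0$).} This is the main obstacle. A direct cut-off argument around the singular edges fails, because when some $\delta_i=0$ the flux through thin tubes around those edges need not vanish. The regularization circumvents this, and the limit is handled by dominated convergence.
\begin{itemize}
\item Pointwise we have $F_\varepsilon\leq F$ and $F_\varepsilon\to F$.
\item The terms $F_\varepsilon/\lambda_k$ and $F_\varepsilon/(\lambda_0+\lambda_k+\varepsilon)$ are dominated by the integrands of the two finite moments from Step 1.
\item The error terms satisfy $0\leq \varepsilon F_\varepsilon/(\lambda_0+\lambda_i+\varepsilon)\leq F$, which is integrable, and they tend to zero pointwise.
\end{itemize}
Passing to the limit and dividing by $|T|$ yields
\begin{align*}
|\delta|\,\mathcal B^{\bar\alpha}_\gamma=\alpha_k\mathcal B^{\bar\alpha-\fre_k}_\gamma-\gamma_k\mathcal B^{\bar\alpha}_{\gamma+\fre_k}.
\end{align*}
Since $|\delta|\geq\delta_k>0$, dividing by $|\delta|$ gives the claim.
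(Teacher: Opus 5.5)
Your proof is correct and is essentially the paper's argument. The paper uses the same vertex-centred field (on the reference simplex it is $X_k=e_k-z$, i.e.\ $-(x-v_k)$), Euler's identity for the homogeneous integrand, and the divergence theorem. However, it applies Gauss's theorem directly to the singular field $fX_k$, whereas your $\varepsilon$-regularization with dominated convergence actually justifies that step.

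Your aside that a cut-off argument would fail is too pessimistic, though it does not affect your proof. Any edge $\{\lambda_0=\lambda_i=0\}$ with $\delta_i=0$ has $i\neq k$ and therefore contains $v_k$. Hence $x-v_k$ is tangential along that edge, and the flux through a tube of radius $r$ around it is $O(r)$.
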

\begin{proof}
The following proof combines, similar to the approach for polytopes in \cite[Rem.~2.5]{Lasserre98}, homogeneity and the divergence theorem.
Let $1\leq \gamma$ and $\delta_k>0$ for $k\in \lbrace 1,2,3 \rbrace$. We set the reference simplex $\widetilde T\coloneqq\lbrace z\in\mathbb R_{\geq0}^4\colon z_0+z_1+z_2+z_3=1\rbrace$ and define the function 
\begin{align*}
 f(z) \coloneqq \frac{z_1^{\alpha_1}z_2^{\alpha_2}z_3^{\alpha_3}}{(z_0+z_1)^{\gamma_1}(z_0+z_2)^{\gamma_2}(z_0+z_3)^{\gamma_3}}\qquad\text{for all }z \in \widetilde{T}.
\end{align*}
An integration by substitution reveals $\mathcal B^{\bar\alpha}_{\gamma} = \dashint_{\widetilde T} f(z)  \,\mathrm dz$.
We set the vector field 
\begin{align*}
  X_k(z)\coloneqq e_k-z \qquad\text{for all }z\in \widetilde{T}.
\end{align*}
Since $\delta_k>0$ and $\gamma_k\geq1$, we have $\alpha_k\geq1$. Hence, the function $f$ equals zero on the face $\lbrace z \in \widetilde{T}\colon z_k=0\rbrace$. 
The vector field $X_k$ is tangent to the affine hyperplane containing $\widetilde T$. Moreover, it is tangent to every face $\lbrace z_i=0\rbrace$ with $i\neq k$. Hence the boundary flux of $fX_k$ is zero. Gau\ss's divergence theorem, $\operatorname{div}_{\widetilde T}X_k=-3$, and $X_k$ being tangent to $\widetilde{T}$ yield
\begin{align*}
  0 = \int_{\widetilde T}\operatorname{div}_{\widetilde T}(f X_k)\,\mathrm dz =  \int_{\widetilde T} X_k\cdot\nabla f\,\mathrm dz - 3 \int_{\widetilde T} f\,\mathrm dz.
\end{align*}
The function $f$  satisfies $\rho^{|\delta|-3} f(z) = f(\rho z)$ for all $z\in \mathbb{R}^4$ and $\rho > 0$; that is, it is homogeneous of degree $q\coloneqq |\bar\alpha|-|\gamma|=|\delta|-3$.
Hence, the definition of $X_k$ and Euler's identity for homogeneous functions imply 
\begin{align*}
  X_k\cdot\nabla f =  \partial_{z_k}f-\sum_{i=0}^3 z_i\partial_{z_i}f = \partial_{z_k}f-qf .
\end{align*}
Combining these observations yields
\begin{align*}
  \int_{\widetilde T}\partial_{z_k}f\,\mathrm dz = (q+3)\int_{\widetilde T}f\,\mathrm dz = |\delta|\int_{\widetilde T}f\,\mathrm dz.
\end{align*}
The derivative reads
\begin{align*}
  \partial_{z_k}f(z) & = \alpha_k \frac{z_1^{\alpha_1} z_2^{\alpha_2}  z_3^{\alpha_3}}   {z_k(z_0+z_1)^{\gamma_1}(z_0+z_2)^{\gamma_2}(z_0+z_3)^{\gamma_3}} \\
  & \quad - \gamma_k \frac{z_1^{\alpha_1}z_2^{\alpha_2}z_3^{\alpha_3}}{(z_0+z_k)^{\gamma_k+1} \prod_{1\leq i\leq 3, i\neq k} (z_0+z_i)^{\gamma_i}}.
\end{align*}
Integrating this identity over $\widetilde T$ gives
\begin{align*}
  |\delta|\,\mathcal B^{\bar\alpha}_{\gamma} = \alpha_k\,\mathcal B^{\bar\alpha-\fre_k}_{\gamma} - \gamma_k\,\mathcal B^{\bar\alpha}_{\gamma+\fre_k}.
\end{align*}
Dividing by $|\delta|$ proves the formula.
Both moments on the right-hand side are finite, as by assumption $\delta_k > 0$, ensuring the single conditions \eqref{eq:star-single-criterion}. The pair conditions \eqref{eq:star-pair-criterion} then follow for the case $\alpha_0 = 0$ from the single conditions.
\end{proof}
An iterative application of Lemma~\ref{lem:boundary-star-descent} results in the representation of $\mathcal{B}_{\gamma'}^{\bar \alpha'}$ as linear combination of moments $\mathcal{B}_{\gamma}^{\bar \alpha}$ with $|\delta| = 0$; that is, moments that read with $a,b,c\in\mathbb N_0$
\begin{align*}
  \mathcal C_{a,b,c} \coloneqq \mathcal B^{(a,b,c)}_{(a+1,b+1,c+1)} = \dashint_T\frac{\lambda_1^a\lambda_2^b\lambda_3^c}{(\lambda_0+\lambda_1)^{a+1}(\lambda_0+\lambda_2)^{b+1}(\lambda_0+\lambda_3)^{c+1}} \dx.
\end{align*}
The following lemma, motivated by the Duffy transformation, transforms the integral into an integral over the cube. The transformation involves the function 
\begin{align}\label{eq:DefDxyz}
D(x,y,z)\coloneqq 1-xy-xz-yz+2xyz\qquad\text{for all }x,y,z\in [0,1].
\end{align}
\begin{lemma}[Cube representation]
\label{lem:diagonal-boundary-star-cube}
For all $a,b,c\in\mathbb N_0$ there holds
\begin{align*}
  \mathcal C_{a,b,c} = 6\int_0^1\int_0^1\int_0^1 \frac{x^a y^b z^c}{D(x,y,z)} \,\mathrm dz\,\mathrm dy\dx.
\end{align*}
\end{lemma}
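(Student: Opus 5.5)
The plan is to use the projective (Duffy-type) substitution suggested by the denominators. On the reference simplex, introduce
\begin{align*}
x\coloneqq \frac{\lambda_1}{\lambda_0+\lambda_1},\qquad y\coloneqq \frac{\lambda_2}{\lambda_0+\lambda_2},\qquad z\coloneqq \frac{\lambda_3}{\lambda_0+\lambda_3}.
\end{align*}
Each of these lies in $[0,1]$. The map $(\lambda_1,\lambda_2,\lambda_3)\mapsto(x,y,z)$ is a bijection between the interior of the simplex $\lbrace \lambda_i\geq 0,\ \lambda_1+\lambda_2+\lambda_3\leq 1\rbrace$ and the open cube, with $\lambda_0=1-\lambda_1-\lambda_2-\lambda_3$. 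The boundary is a null set and can be ignored.

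I will factor the inverse map as a composition of two explicit maps. First set $t_i\coloneqq x_i/(1-x_i)$ with $(x_1,x_2,x_3)=(x,y,z)$, so that $t_i=\lambda_i/\lambda_0$. Second, recover $\lambda_i=t_i/(1+t_1+t_2+t_3)$ for $i=1,2,3$ and $\lambda_0=1/(1+t_1+t_2+t_3)$. This map $\mathbb R_{>0}^3\to$ simplex is the standard projective chart, whose Jacobian determinant equals $(1+t_1+t_2+t_3)^{-4}$ (the factor $4=d+1$ comes from homogeneity). The first map has Jacobian $\prod_i(1-x_i)^{-2}$.

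The key algebraic identity is
\begin{align*}
1+\frac{x}{1-x}+\frac{y}{1-y}+\frac{z}{1-z}=\frac{D(x,y,z)}{P},\qquad P\coloneqq(1-x)(1-y)(1-z).
\end{align*}
To verify it, expand $P+x(1-y)(1-z)+y(1-x)(1-z)+z(1-x)(1-y)$. The linear terms cancel, the quadratic terms sum to $-(xy+xz+yz)$, and the cubic terms sum to $2xyz$, which gives $D$ as defined in \eqref{eq:DefDxyz}. Consequently $\lambda_0=P/D$. Combining the two Jacobians yields
\begin{align*}
|\det| = P^{-2}(P/D)^4 = P^2/D^4.
\end{align*}

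Next I rewrite the integrand. We have $\lambda_0+\lambda_1=\lambda_0/(1-x)=(1-y)(1-z)/D$, and analogously for the other two factors. Hence the product of the three denominator factors is $P^2/D^3$. The numerator satisfies $\lambda_1^a\lambda_2^b\lambda_3^c=x^ay^bz^c(\lambda_0+\lambda_1)^a(\lambda_0+\lambda_2)^b(\lambda_0+\lambda_3)^c$. Therefore the integrand equals $x^ay^bz^c\,D^3/P^2$, and multiplying by the Jacobian $P^2/D^4$ gives exactly $x^ay^bz^c/D$. The factor $6$ is the normalization $|T|^{-1}$ relative to the reference simplex of volume $1/6$, via the affine invariance of the moment noted at the start of Section~\ref{sec:3d-rational-moments}.

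I expect the main obstacle to be the Jacobian computation, which I will do via the two-step factorization above. For the projective chart, I will verify $(1+\sum t)^{-4}$ directly: the Jacobian matrix is $(1+\sum t)^{-1}\big(\mathrm{Id}-\lambda\mathbf 1^\top\big)$, and the matrix determinant lemma gives the determinant $(1+\sum t)^{-3}(1-\sum_{i}\lambda_i)=(1+\sum t)^{-3}\lambda_0$. Since $\lambda_0=(1+\sum t)^{-1}$, this equals $(1+\sum t)^{-4}$. Integrability is not an issue: the resulting integrand is bounded on the cube, because $D\geq 1-xy-xz-yz+2xyz$ is positive away from the vertex $(1,1,1)$, where $D=1$. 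More precisely, $D$ is positive wherever at least two of the coordinates are less than $1$. Alternatively, integrability follows from Lemma~\ref{lem:integrability} together with monotone convergence.
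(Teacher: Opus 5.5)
Your proof is correct and follows the paper's route. You use the same substitution $x=\lambda_1/(\lambda_0+\lambda_1)$ (and analogously for $y,z$), the same identity $\lambda_0=(1-x)(1-y)(1-z)/D(x,y,z)$, and arrive at the same Jacobian $(1-x)^2(1-y)^2(1-z)^2/D^4$. You justify that Jacobian explicitly, by factoring through $t_i=\lambda_i/\lambda_0$ and applying the matrix determinant lemma, where the paper only says ``a calculation reveals''.

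One side remark is false and should be dropped. In fact $D(1,1,1)=1-3+2=0$, and more generally $D(1,1,z)=0$, with the same on the other two cube edges where two coordinates equal $1$. Hence $x^ay^bz^c/D$ is unbounded on the cube, not bounded. This does not affect the argument: the substitution rule holds for nonnegative integrands without any integrability hypothesis, and finiteness of $\mathcal C_{a,b,c}$ follows from Lemma~\ref{lem:integrability}, as your alternative remark already notes.
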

\begin{proof}
We set the variables
\begin{align*}
  x\coloneqq \frac{\lambda_1}{\lambda_0+\lambda_1}, \qquad
  y\coloneqq \frac{\lambda_2}{\lambda_0+\lambda_2}, \qquad
  z\coloneqq \frac{\lambda_3}{\lambda_0+\lambda_3}.
\end{align*}
From the definition of $x$ we have $x(\lambda_0+\lambda_1)=\lambda_1$ and $x\lambda_0=(1-x)\lambda_1$.
This identity and similar arguments yield
\begin{align*}
  \lambda_1=\frac{x}{1-x}\lambda_0,\qquad
  \lambda_2=\frac{y}{1-y}\lambda_0,\qquad
  \lambda_3=\frac{z}{1-z}\lambda_0.
\end{align*}
Using these identities and the partition of unity yields
\begin{align*}
  1 &= \lambda_0+\lambda_1+\lambda_2+\lambda_3 = \lambda_0\left(1+\frac{x}{1-x}+\frac{y}{1-y}+\frac{z}{1-z}\right)\\
   & = \lambda_0\left( \frac{1-xy-xz-yz+2xyz}{(1-x)(1-y)(1-z)} \right).
\end{align*}
With $D(x,y,z) \coloneqq 1-xy-xz-yz+2xyz$ we thus obtain
\begin{align*}
  \lambda_0 = \frac{(1-x)(1-y)(1-z)}{D(x,y,z)}.
\end{align*}
Similarly, we conclude that
\begin{align*}
\begin{aligned}
 \lambda_1&=\frac{x(1-y)(1-z)}{D(x,y,z)}, \qquad
  \lambda_2=\frac{y(1-x)(1-z)}{D(x,y,z)}, \qquad 
 \lambda_3=\frac{z(1-x)(1-y)}{D(x,y,z)}.
\end{aligned}
\end{align*}
A calculation reveals for the coordinate transformation $(x,y,z)\mapsto(\lambda_1,\lambda_2,\lambda_3)$ that
\begin{align*}
  \det\frac{\partial(\lambda_1,\lambda_2,\lambda_3)}{\partial(x,y,z)} = \frac{(1-x)^2(1-y)^2(1-z)^2}{D(x,y,z)^4}.
\end{align*}
Consequently, an integration by substitution verifies the lemma.
\end{proof}
In order to evaluate this integral over a cube, we exploit the following reduction formula, involving the quantity
\begin{align*}
  \mathcal P_{r,s,t} \coloneqq 6\int_0^1\int_0^1\int_0^1 x^r y^s z^t\,\mathrm dz\,\mathrm dy\dx = \frac{6}{(r+1)(s+1)(t+1)}\quad \text{for all }r,s,t\in \mathbb N_0.
\end{align*}
\begin{lemma}[Reduction of $\mathcal C_{a,b,c}$]\label{lem:cube-reduction-one-zero-exponent}
We have for all positive integers $a,b,c\in\mathbb N$ that
\begin{align*}
  2\mathcal C_{a,b,c} = \mathcal P_{a-1,b-1,c-1} - \mathcal C_{a-1,b-1,c-1} + \mathcal C_{a,b,c-1} + \mathcal C_{a,b-1,c} + \mathcal C_{a-1,b,c}.
\end{align*}
\end{lemma}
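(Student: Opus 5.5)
The identity should follow from a pointwise algebraic relation between the integrands in the cube representation of Lemma~\ref{lem:diagonal-boundary-star-cube}, followed by integration over $[0,1]^3$. Since $a,b,c\geq 1$, all five moments and $\mathcal P_{a-1,b-1,c-1}$ can be written as integrals over the unit cube of the common factor $x^{a-1}y^{b-1}z^{c-1}$ times a simple rational function. After factoring this out and multiplying through by $D(x,y,z)$, the claim reduces to a polynomial identity in $x,y,z$.

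\textbf{Step 1 (integrands).} By Lemma~\ref{lem:diagonal-boundary-star-cube}, the left-hand side $2\mathcal C_{a,b,c}$ has integrand $6\,x^{a-1}y^{b-1}z^{c-1}\cdot 2xyz/D$. On the right-hand side, the terms contribute the following multiples of $6\,x^{a-1}y^{b-1}z^{c-1}$:
\begin{itemize}
\item $\mathcal P_{a-1,b-1,c-1}$ contributes $1 = D/D$;
\item $-\mathcal C_{a-1,b-1,c-1}$ contributes $-1/D$;
\item the three remaining moments contribute $xy/D$, $xz/D$ and $yz/D$.
\end{itemize}
Hence the lemma follows once we verify
\begin{align*}
  2xyz = D(x,y,z) - 1 + xy + xz + yz .
\end{align*}
This is exactly the definition \eqref{eq:DefDxyz} of $D$ rearranged, so the integrands agree pointwise wherever $D\neq 0$.

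\textbf{Step 2 (integration).} Integrating the pointwise identity over $[0,1]^3$ and using linearity of the integral gives the claim, provided every individual integral is finite. This is the only point that needs care. The function $D$ vanishes at the corner $(1,1,1)$, since $D(1,1,1)=1-3+2=0$, so $1/D$ is singular there. To settle this, I would not estimate $1/D$ near the corner directly. Instead I would note that each $\mathcal C_{r,s,t}$ with $r,s,t\in\mathbb N_0$ equals the star moment $\mathcal B^{(r,s,t)}_{(r+1,s+1,t+1)}$. For this moment, $\gamma_j=\alpha_j+1$, so the single conditions \eqref{eq:star-single-criterion} hold with equality. With $\alpha_0=0$, the pair conditions \eqref{eq:star-pair-criterion} follow from them. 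Lemma~\ref{lem:integrability} therefore shows that the moment is finite.

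The substitution in Lemma~\ref{lem:diagonal-boundary-star-cube} has a positive Jacobian, so the nonnegative integrands $x^ry^sz^t/D$ are integrable on the cube. (Positivity of $D$ on the open cube follows from $\lambda_0=(1-x)(1-y)(1-z)/D>0$.) The polynomial term $\mathcal P_{a-1,b-1,c-1}$ is trivially finite. Since all five integrals are finite, splitting the integral of the sum into the sum of the integrals is justified, and this completes the argument.
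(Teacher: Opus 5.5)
Your proposal is correct and follows the paper's proof: rearrange the definition \eqref{eq:DefDxyz} as $2xyz = D(x,y,z)-1+xy+xz+yz$, multiply by $x^{a-1}y^{b-1}z^{c-1}/D(x,y,z)$, and integrate over the unit cube. Your added check that every $\mathcal C$-term is finite, via its star-moment form and Lemma~\ref{lem:integrability}, justifies splitting the integral into separate terms, which the paper does without comment.
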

\begin{proof}
The identity $D(x,y,z)=1-xy-xz-yz+2xyz$ can be rewritten as
\begin{align*}
  2xyz=D(x,y,z)-1+xy+xz+yz .
\end{align*}
Multiplying this identity by $x^{a-1}y^{b-1}z^{c-1}/D(x,y,z)$ and integrating over the unit cube leads to the lemma.
\end{proof}
Repeated use of Lemma~\ref{lem:cube-reduction-one-zero-exponent} reduces every $\mathcal C_{a,b,c}$ to moments with at least one zero exponent. By symmetry it remains to consider moments of the form 
\begin{align*}
\mathcal C_{a,b,0} = 6\int_0^1\int_0^1 x^a y^b \int_0^1 \frac{1 }{D(x,y,z)} \,\mathrm dz\,\mathrm dy\dx.
\end{align*}
\begin{lemma}[Logarithmic representation]\label{lem:two-variable-log-representation}
For all $a,b\in\mathbb N_0$ there holds
\begin{align*}
  \mathcal C_{a,b,0} = 6\int_0^1\int_0^1 \frac{x^a y^b}{x+y-2xy} \log\left(\frac{1-xy}{(1-x)(1-y)}\right) \,\mathrm dy\dx.
\end{align*}
\end{lemma}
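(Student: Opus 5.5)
The plan is to start from the cube representation in Lemma~\ref{lem:diagonal-boundary-star-cube} with $c=0$ and evaluate the innermost integral $\int_0^1 D(x,y,z)^{-1}\,\mathrm dz$ in closed form. By Fubini (everything is nonnegative), the remaining double integral is then exactly the claimed one.

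The key observation is that $D$ is affine in $z$:
\begin{align*}
  D(x,y,z) = (1-xy) - z\,(x+y-2xy).
\end{align*}
Set $A\coloneqq 1-xy$ and $B\coloneqq x+y-2xy$. For $(x,y)\in(0,1)^2$ we have $B = x(1-y)+y(1-x)>0$. Moreover,
\begin{align*}
  A-B = 1-x-y+xy = (1-x)(1-y)>0,
\end{align*}
so $D$ stays positive on $[0,1]$ in the variable $z$. The elementary antiderivative of $1/(A-Bz)$ then gives
\begin{align*}
  \int_0^1 \frac{\mathrm dz}{A-Bz} = \frac{1}{B}\log\frac{A}{A-B} = \frac{1}{x+y-2xy}\log\left(\frac{1-xy}{(1-x)(1-y)}\right).
\end{align*}
Inserting this into Lemma~\ref{lem:diagonal-boundary-star-cube} with $c=0$ yields the asserted formula.

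The only delicate point is justifying the interchange of integration and the behaviour of the integrand near the degenerate set. The set $\{B=0\}$ within $[0,1]^2$ consists only of the corners $(0,0)$ and $(1,1)$, and $\{A=B\}$ lies on the boundary lines $x=1$ or $y=1$. Both sets have measure zero in the $(x,y)$-square, so the pointwise computation holds almost everywhere. Tonelli's theorem for nonnegative integrands then applies without any finiteness assumption, and finiteness of $\mathcal C_{a,b,0}$ (a finite moment by Lemma~\ref{lem:integrability}, since here $\delta=0$ satisfies the single conditions) transfers to the double integral.

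If one wants an explicit check: near $(0,0)$ the integrand $\tfrac1B\log\tfrac{A}{A-B}$ tends to $1$, because $\log\tfrac{A}{A-B}\approx B$. Near $(1,1)$ it has only a logarithmic singularity, which is integrable. I expect no real obstacle; the main work is the factorization $A-B=(1-x)(1-y)$, which makes the logarithm's argument match the stated form.
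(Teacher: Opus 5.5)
Your proposal is correct and is essentially the paper's proof: write $D=(1-xy)-z(x+y-2xy)$, evaluate the inner $z$-integral explicitly using $(1-xy)-(x+y-2xy)=(1-x)(1-y)$, and conclude with the cube representation for $c=0$ and Fubini/Tonelli. A minor imprecision in your optional check: near $(1,1)$, with $s=1-x$ and $t=1-y$, the integrand behaves like $(s+t)^{-1}\log\big((s+t)/(st)\big)$, i.e.\ like $r^{-1}\log(1/r)$ rather than merely logarithmically; this is still integrable in two dimensions, and your Tonelli argument does not need the check anyway.
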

\begin{proof}
By \eqref{eq:DefDxyz} we have for all $x,y,z\in [0,1]$ the identity
\begin{align*}
  D(x,y,z) = 1-xy-z(x+y-2xy).
\end{align*}
It holds that
\begin{align*}
  \frac{\mathrm d}{\mathrm dz} \log\left(1-xy-z(x+y-2xy)\right) = -\frac{x+y-2xy}{1-xy-z(x+y-2xy)} .
\end{align*}
Hence, we obtain the identity
\begin{align*}
  (x+y-2xy) \int_0^1 \frac{1}{D(x,y,z)}\,\mathrm dz = \log\left(\frac{1-xy}{(1-x)(1-y)}\right).
\end{align*}
This identity, Lemma~\ref{lem:diagonal-boundary-star-cube} with $c=0$, and Fubini's theorem yield the assertion.
\end{proof}

In our last iterative step we reduce the moments to a one-parameter family of moments and explicit integrals
\begin{align}\label{eq:DefLpq}
  \mathcal L_{p,q} \coloneqq 6\int_0^1\int_0^1 x^p y^q\log\left(\frac{1-xy}{(1-x)(1-y)}\right) \,\mathrm dy\dx\qquad\text{for } p,q\in \mathbb{N}_0.
\end{align}
The exact value of this integral involves the quantities  
\begin{align}\label{eq:DefHr}
\zeta(2) \coloneqq \sum_{k=1}^\infty \frac{1}{k^{2}} = \frac{\pi^2}{6}\quad \text{and} \quad H_m \coloneqq \sum_{r=1}^m \frac1r, \ H^{(2)}_m \coloneqq \sum_{r=1}^m \frac{1}{r^2}\quad \text{for }m\in \mathbb{N}_0. 
\end{align}
\begin{lemma}[Evaluation of $\mathcal L_{p,q}$]
\label{lem:explicit-logarithmic-moments}
Let $p,q\in\mathbb N_0$ and set $A\coloneqq p+1$, $B\coloneqq q+1$. Then we have
\begin{align*}
  \mathcal L_{p,q} = 6\left(\frac{H_A+H_B}{AB} - S_{A,B}\right)\quad \text{with}\quad 
  S_{A,B}
  \coloneqq
  \begin{cases}
    \frac{H_A/A-H_B/B}{B-A} &\text{if } A\neq B,\\
    \frac{H_A}{A^2}-\frac{\zeta(2)-H_A^{(2)}}{A}& \text{if } A=B.
  \end{cases}
\end{align*}
\end{lemma}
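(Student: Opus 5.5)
The plan is to split the logarithm as
\begin{align*}
\log\Big(\frac{1-xy}{(1-x)(1-y)}\Big)=\log(1-xy)-\log(1-x)-\log(1-y),
\end{align*}
integrate each piece against $x^py^q$ separately, and use Fubini/Tonelli throughout. Each integrand is integrable and each series below has nonnegative terms after a sign flip, so exchanging sums and integrals is legitimate.

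\emph{Separable terms.} We use the expansion $-\log(1-x)=\sum_{k\ge1}x^k/k$ together with the classical identity
\begin{align*}
-\int_0^1 x^{p}\log(1-x)\dx=\sum_{k\ge1}\frac{1}{k(k+A)}=\frac{H_A}{A}.
\end{align*}
Multiplying by $\int_0^1 y^q\dy=1/B$ gives $H_A/(AB)$. The symmetric term $-\log(1-y)$ gives $H_B/(AB)$. This produces the first contribution $(H_A+H_B)/(AB)$ in the statement.

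\emph{Coupled term.} Next we treat
\begin{align*}
\int_0^1\int_0^1 x^py^q\log(1-xy)\,\mathrm dy\dx = -\sum_{k\ge1}\frac{1}{k(k+A)(k+B)},
\end{align*}
which follows from expanding $\log(1-xy)=-\sum_{k\ge1}(xy)^k/k$ and integrating termwise. It remains to identify this series with $S_{A,B}$.

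For $A\neq B$, partial fractions give
\begin{align*}
\frac{1}{(k+A)(k+B)}=\frac{1}{B-A}\Big(\frac{1}{k+A}-\frac{1}{k+B}\Big).
\end{align*}
Each resulting series $\sum_{k\ge1}\frac{1}{k(k+A)}$ equals $H_A/A$, as above. Hence the series equals $\frac{H_A/A-H_B/B}{B-A}=S_{A,B}$.

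For $A=B$, we write
\begin{align*}
\frac{1}{k(k+A)^2}=\frac{1}{A}\Big(\frac{1}{k(k+A)}-\frac{1}{(k+A)^2}\Big).
\end{align*}
Then we use $\sum_{k\ge1}(k+A)^{-2}=\zeta(2)-H^{(2)}_A$ to get $\frac{H_A}{A^2}-\frac{\zeta(2)-H_A^{(2)}}{A}$, which is again $S_{A,B}$. Collecting the three pieces and multiplying by $6$ yields the claimed formula for $\mathcal L_{p,q}$.

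\emph{Main obstacle.} The only delicate point is justifying the splitting and the termwise integration near the corner $x=y=1$, where the individual logarithms are singular. Each piece is separately integrable, since $\log(1-x)\in L^1(0,1)$ and $\log(1-xy)$ is bounded by $|\log(1-x)|+|\log(1-y)|$. All series involved have terms of fixed sign, so monotone convergence applies. The identity $\sum_{k\ge1}\frac{1}{k(k+A)}=H_A/A$ I would verify by telescoping $\frac1A\big(\frac1k-\frac1{k+A}\big)$.
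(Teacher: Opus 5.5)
Your proof is correct and follows the paper's argument essentially step for step. Both split the logarithm into three terms and expand in series, obtaining $H_A/(AB)$, $H_B/(AB)$ and $-\sum_{k\ge1}1/(k(k+A)(k+B))$, then apply the same partial fraction decompositions for $A\neq B$ and $A=B$. Your remarks on integrability and monotone convergence only make explicit the interchange of sums and integrals that the paper uses without comment.
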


\begin{proof}
Let $p,q\in\mathbb N_0$ and set $A\coloneqq p+1$, $B\coloneqq q+1$.
We split the logarithm in the integral \eqref{eq:DefLpq} into
\begin{align*}
  \log\left(\frac{1-xy}{(1-x)(1-y)}\right) = \log(1-xy)-\log(1-x)-\log(1-y).
\end{align*}
Hence, the integral splits into a sum of three addends. To evaluate the latter two addends, we exploit that for all $m\in \mathbb{N}_0$ the series expansion $\log(1-t) = -\sum_{j=1}^{\infty}t^j/j$  for $0\leq t<1$ yields
\begin{align*}
- \int_0^1 t^m\log(1-t)\dt& = \sum_{j=1}^{\infty}\frac1j\int_0^1 t^{m+j}\dt = \sum_{j=1}^{\infty}\frac{1}{j(m+j+1)} = \frac{H_{m+1}}{m+1}.
\end{align*}
Using this identity, we obtain 
\begin{align*}
-\int_0^1\int_0^1 x^p y^q\log(1-y) \,\mathrm dy\dx = \frac{H_B}{B} \int_0^1 x^p \dx = \frac{H_B}{AB}.
\end{align*}
Similarly, we obtain the corresponding value $H_A/(AB)$ for the second term.
Using the series $\log(1-xy)=-\sum_{j=1}^{\infty}x^j y^j/j$, we conclude for the remaining term 
\begin{align*}
  \int_0^1\int_0^1 x^p y^q\log(1-xy)\,\mathrm dy\dx = -\sum_{j=1}^{\infty}\frac{1}{j(j+A)(j+B)}.
\end{align*}
If $A\neq B$, we conclude that
\begin{align*}
  \sum_{j=1}^{\infty}\frac{1}{j(j+A)(j+B)} &= \frac{1}{B-A} \left(\sum_{j=1}^{\infty}\frac{1}{j(j+A)} - \sum_{j=1}^{\infty}\frac{1}{j(j+B)} \right)\\
  & = \frac{1}{B-A} \left(\frac{H_A}{A}-\frac{H_B}{B} \right).
\end{align*}
If $A=B$, then the partial fraction decomposition
\begin{align*}
  \frac{1}{r(r+A)^2} = \frac{1}{A^2}\left(\frac1r-\frac{1}{r+A}\right) - \frac{1}{A}\frac{1}{(r+A)^2}
\end{align*}
yields
\begin{align*}
  \sum_{r=1}^{\infty}\frac{1}{r(r+A)^2} = \frac{H_A}{A^2} - \frac{\zeta(2)-H_A^{(2)}}{A}.
\end{align*}
Combining the three contributions proves the formula.
\end{proof}
The value $\mathcal{L}_{p,q}$ occurs in the following reduction formula for $\mathcal C_{a,b,0}$ characterized in Lemma~\ref{lem:two-variable-log-representation}.
\begin{lemma}[Reduction to $\mathcal F_n$]\label{lem:two-index-reduction-to-Fn}
For all $a,b\in\mathbb N$ there holds
\begin{align*}
  \mathcal C_{a,b,0} = \frac12\mathcal C_{a,b-1,0} + \frac12\mathcal C_{a-1,b,0} - \frac12\mathcal L_{a-1,b-1}.
\end{align*}
\end{lemma}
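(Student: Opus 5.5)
The identity comes from the same algebraic trick as in Lemma~\ref{lem:cube-reduction-one-zero-exponent}, applied now to the logarithmic representation of Lemma~\ref{lem:two-variable-log-representation}. The denominator there is $x+y-2xy$, so I rewrite
\begin{align*}
  2xy = x + y - (x+y-2xy).
\end{align*}
Write $\Lambda(x,y)\coloneqq\log\bigl((1-xy)/((1-x)(1-y))\bigr)$. Multiplying the identity by $x^{a-1}y^{b-1}\Lambda(x,y)/(x+y-2xy)$, which is legitimate because $a,b\geq 1$, gives pointwise on $(0,1)^2$
\begin{align*}
  \frac{2x^ay^b}{x+y-2xy}\Lambda(x,y) = \frac{x^ay^{b-1}+x^{a-1}y^b}{x+y-2xy}\Lambda(x,y) - x^{a-1}y^{b-1}\Lambda(x,y).
\end{align*}

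Next I multiply by $6$ and integrate over the unit square. By Lemma~\ref{lem:two-variable-log-representation}, the left-hand side is $2\mathcal C_{a,b,0}$ and the first term on the right is $\mathcal C_{a,b-1,0}+\mathcal C_{a-1,b,0}$. By the definition \eqref{eq:DefLpq}, the last term is $\mathcal L_{a-1,b-1}$. Dividing by two yields the claimed formula.

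The only point needing care is justifying the splitting of the integral, i.e.\ that each of the three integrals is finite, so that linearity applies.

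\begin{itemize}
\item For $\mathcal L_{a-1,b-1}$, I check that $\Lambda$ is integrable on $[0,1]^2$. It equals $\log(1-xy)-\log(1-x)-\log(1-y)$, and each summand has only an integrable logarithmic singularity. The pure log terms are handled by the explicit evaluation in Lemma~\ref{lem:explicit-logarithmic-moments}. For the $\log(1-xy)$ term, the singularity is only at the corner $(1,1)$ and is of size $\log|(x,y)-(1,1)|$, which is integrable in two dimensions.
\item For the $\mathcal C$-terms, Lemma~\ref{lem:two-variable-log-representation} identifies them, via Lemma~\ref{lem:diagonal-boundary-star-cube}, with the moments $\mathcal B^{(a,b,0)}_{(a+1,b+1,1)}$. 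These are finite by Lemma~\ref{lem:integrability}, since $\gamma_j\leq\alpha_j+1$ holds with equality. Their integrands are nonnegative, so Tonelli's theorem also justifies the Fubini step.
\item The only other apparent singular locus, $x+y-2xy=0$, occurs in the square only at $(0,0)$ and $(1,1)$. Near $(0,0)$ the quotient $\Lambda/(x+y-2xy)$ stays bounded, because $\Lambda=x+y+O(|(x,y)|^2)$ there. Near $(1,1)$ everything is covered by the finiteness of the $\mathcal C$-moments.
\end{itemize}

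I expect this integrability bookkeeping to be the only mild obstacle; the algebra itself is a one-line identity.
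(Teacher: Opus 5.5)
Your proposal is correct and follows essentially the same route as the paper: rewrite $xy$ via the denominator $x+y-2xy$, multiply by $x^{a-1}y^{b-1}\log\bigl((1-xy)/((1-x)(1-y))\bigr)/(x+y-2xy)$, and integrate over the unit square. Your integrability bookkeeping, which justifies splitting the integral into three finite pieces, is sound and makes explicit a step the paper leaves implicit.
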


\begin{proof}
The denominator $B(x,y)\coloneqq x+y-2xy$ in Lemma~\ref{lem:two-variable-log-representation} satisfies
\begin{align*}
  xy=\frac12 x+\frac12 y-\frac12 B(x,y).
\end{align*}
Multiplying this identity by $x^{a-1}y^{b-1}\log\big((1-xy)/\big((1-x)(1-y)\big)\big)/B(x,y)$ and integrating over $(0,1)^2$ gives
\begin{align*}
  \mathcal C_{a,b,0} &= \frac12 \mathcal C_{a,b-1,0} + \frac12\mathcal C_{a-1,b,0} - \frac12\mathcal L_{a-1,b-1} \qedhere.
\end{align*}
\end{proof}
Finally, the star moments reduce to moments that read with $n\in \mathbb{N}_0$
\begin{align}\label{eq:Fn}
  \mathcal F_n \coloneqq \mathcal{C}_{n,0,0} = 6\int_0^1\int_0^1 \frac{x^n}{x+y-2xy} \log\left(\frac{1-xy}{(1-x)(1-y)}\right) \,\mathrm dy\dx.
\end{align}
This integral can be evaluated for any $n\in \mathbb{N}_0$ by a formula derived in the remainder of this section.
\begin{lemma}[Representation as 1D integral]\label{lem:1DIntegralRepresneation}
Define for all $t \in (0,1)$ the function
\begin{align*}
  \Phi(t)\coloneqq \frac{\pi^2}{6}+2\operatorname{Li}_2(-t).
\end{align*}
Then we have the  representation
\begin{align*}
  \mathcal F_n = 6\int_0^1\left(\frac{1+t^n}{(1-t)(1+t)^{n+1}}\Phi(t)  + \frac{\log^2(t)}{(1-t)(1+t)^{n+1}} \right) \dt.
\end{align*}
\end{lemma}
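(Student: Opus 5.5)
The plan is to turn the two-dimensional integral \eqref{eq:Fn} into a double integral over $(0,\infty)^2$ in which the logarithm has a simple argument. Then I integrate out one variable in closed form via dilogarithms, and fold the remaining half-line onto $(0,1)$.

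\emph{Step 1 (substitution).} I substitute $u\coloneqq x/(1-x)$ and $v\coloneqq y/(1-y)$, which is the inverse of the map used in Lemma~\ref{lem:diagonal-boundary-star-cube}. A short computation gives
\begin{align*}
x+y-2xy=\frac{u+v}{(1+u)(1+v)},\qquad \frac{1-xy}{(1-x)(1-y)}=1+u+v,\qquad \mathrm dx=\frac{\mathrm du}{(1+u)^2}.
\end{align*}
The analogous formula holds for $\mathrm dy$. Hence
\begin{align*}
\mathcal F_n=6\int_0^\infty\frac{u^n}{(1+u)^{n+1}}\,G(u)\,\mathrm du
\qquad\text{with}\qquad G(u)\coloneqq\int_0^\infty\frac{\log(1+u+v)}{(1+v)(u+v)}\,\mathrm dv .
\end{align*}
All integrands are nonnegative, so Tonelli justifies every rearrangement.

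\emph{Step 2 (inner integral).} For $u\neq1$ I use the partial fraction decomposition $\frac{1}{(1+v)(u+v)}=\frac{1}{u-1}\bigl(\frac{1}{1+v}-\frac{1}{u+v}\bigr)$. Each piece diverges separately, so I cut at $v=M$, integrate each piece, and let $M\to\infty$. The substitutions $w=1+v$ and $w=u+v$ reduce each piece to integrals of the form $\int \log(c+w)/w\,\mathrm dw=\log c\log w-\operatorname{Li}_2(-w/c)$. In the difference the $\log^2 M$ terms cancel, and the asymptotics of $\operatorname{Li}_2(-w)$ as $w\to\infty$ remove the remaining divergent parts. I expect the result to be of the form
\begin{align*}
(u-1)G(u)=c_1\log^2 u+\frac{\pi^2}{6}+2\operatorname{Li}_2(-u)+\text{(terms in }\log u\text{)}.
\end{align*}
The constant $c_1$ and the lower-order terms have to be fixed by the computation.

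I expect this evaluation of $G$ to be the main obstacle. The bookkeeping of the divergent constants requires care, and the answer must be put into exactly the form involving $\Phi(t)=\frac{\pi^2}{6}+2\operatorname{Li}_2(-t)$. As a consistency check, $G$ must be regular at $u=1$. This forces the bracket to vanish at $u=1$, and indeed $\Phi(1)=\frac{\pi^2}{6}+2\operatorname{Li}_2(-1)=0$.

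\emph{Step 3 (folding onto $(0,1)$).} I split the outer integral at $u=1$ and substitute $u=1/t$ on $(1,\infty)$. This gives
\begin{align*}
\frac{u^n\,\mathrm du}{(1+u)^{n+1}(u-1)}=\frac{\mathrm dt}{(1-t)(1+t)^{n+1}} .
\end{align*}
On $(0,1)$, with $u=t$, one gets the weight $-\frac{t^n}{(1-t)(1+t)^{n+1}}$. Next I apply the inversion formula $\operatorname{Li}_2(-t)+\operatorname{Li}_2(-1/t)=-\frac{\pi^2}{6}-\frac12\log^2 t$ to rewrite $(u-1)G(u)$ at $u=1/t$. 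This should show $\Phi(1/t)=-\Phi(t)-\log^2 t$, so that, up to sign and $\log t$-terms, $\Phi$ contributes with opposite signs on the two halves. Combined with the weights $1$ and $-t^n$, this produces the factor $1+t^n$ in front of $\Phi(t)$. The leftover $\log^2 t$ contributions (and any $\log t$ terms, which should cancel or combine) should collect into $\log^2(t)/((1-t)(1+t)^{n+1})$. This would yield the asserted representation. Finally, I check integrability at $t\to1$: there $\Phi(t)=O(1-t)$, so the integrand is bounded. At $t\to0$ the term $\log^2 t$ is integrable.
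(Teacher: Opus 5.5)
\textbf{Assessment.} Your route works and differs from the paper's, but Step~2, on which everything rests, is only conjectured in your proposal, and the conjectured form is wrong.

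\textbf{Comparison with the paper.} The paper evaluates the inner integral only for $a\in(0,1)$. There the substitution $t=(a+b)/(1+b)$ gives
\[
K(a)=\frac{1}{1-a}\int_a^1\frac{\log(1-at)-\log(1-t)}{t}\,\mathrm dt=\frac{\pi^2/6+\operatorname{Li}_2(a^2)-2\operatorname{Li}_2(a)}{1-a}.
\]
The range $a>1$ is never computed. It is folded back by the scaling relation $K(1/t)=tK(t)+t\log^2(t)/(1-t)$, obtained from $b=c/t$, and this is where the $\log^2$ term originates. The duplication formula $\operatorname{Li}_2(t^2)=2\operatorname{Li}_2(t)+2\operatorname{Li}_2(-t)$ then turns the numerator into $\Phi$. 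You instead compute $G$ on all of $(0,\infty)$ by partial fractions with a cutoff and use the inversion formula. This costs some asymptotic bookkeeping. In exchange it avoids both the symmetry relation and the duplication formula, and it shows that one closed form for $G$ holds on the whole half-line. In your route the $\log^2 t$ term then comes only from $\Phi(1/t)=-\Phi(t)-\log^2 t$.

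\textbf{The gap.} Your guessed template has the wrong sign in front of $\frac{\pi^2}{6}+2\operatorname{Li}_2(-u)$, and no choice of $c_1$ or of $\log u$-terms can repair that. Carrying out your own computation, the two cut-off pieces are
\[
\log u\log(M+1)-\operatorname{Li}_2\bigl(-(M+1)/u\bigr)+\operatorname{Li}_2(-1/u)
\quad\text{and}\quad
\operatorname{Li}_2(-u)-\operatorname{Li}_2\bigl(-(u+M)\bigr).
\]
Apply the inversion formula to the two $M$-dependent dilogarithms. The $\log u\log(M+1)$ term and the $\log^2$ divergences then cancel, and
\[
(u-1)G(u)=\tfrac12\log^2 u+\operatorname{Li}_2(-1/u)-\operatorname{Li}_2(-u)=-\tfrac{\pi^2}{6}-2\operatorname{Li}_2(-u)=-\Phi(u)\qquad (u>0,\ u\neq 1).
\]
So $c_1=0$, there are no $\log u$ terms, and $G(u)=\Phi(u)/(1-u)$. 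As a check, $G(0)=\int_0^\infty\frac{\log(1+v)}{v(1+v)}\,\mathrm dv=\frac{\pi^2}{6}$.

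With this value your Step~3 closes exactly as sketched. The half $(0,1)$ contributes the weight $-t^n$ times $-\Phi(t)$. The half $(1,\infty)$ contributes the weight $1$ times $-\Phi(1/t)=\Phi(t)+\log^2 t$. Together this gives the numerator $(1+t^n)\Phi(t)+\log^2 t$. With the sign in your template you would instead have obtained the negative of the positive quantity $\mathcal F_n$. So this computation has to be carried out explicitly; it cannot be deferred as fixing constants.
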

\begin{proof}
Define the transformation
\begin{align*}
  x=\frac{a}{1+a}\qquad \text{and}\qquad y=\frac{b}{1+b}.
\end{align*}
It holds that 
\begin{align*}
  x+y-2xy = \frac{a+b}{(1+a)(1+b)}\quad \text{and}\quad \frac{1-xy}{(1-x)(1-y)} = 1+a+b.
\end{align*}
An integration by substitution of \eqref{eq:Fn} with $\mathrm dx\,\mathrm dy = (1+a)^{-2}(1+b)^{-2} \,\mathrm da\,\mathrm db$ yields
\begin{align}\label{eq:proofasdsadadsada}
  \mathcal F_n = 6\int_0^\infty \frac{a^n}{(1+a)^{n+1}}K(a)  \,\mathrm da\qquad\text{with } 
  K(a) \coloneqq \int_0^\infty \frac{\log(1+a+b)}{(a+b)(1+b)} \,\mathrm db.
\end{align}
We proceed with the computation of $K(a)$. We start with the case $a\in (0,1)$, where we set $t\coloneqq (a+b)/(1+b) \in (a,1)$. A calculation gives
\begin{align*}
  \frac{\mathrm db}{(a+b)(1+b)} = \frac{1}{1-a}\frac{\mathrm dt}{t}\qquad\text{and}  \qquad  1+a+b = \frac{1-at}{1-t}.
\end{align*}
Using this substitution, we obtain
\begin{align*}
  K(a) &= \frac{1}{1-a} \int_a^1 \frac{\log(1-at)-\log(1-t)}{t}\dt.
\end{align*}
The identities $\mathrm d\operatorname{Li}_2(t)/\mathrm dt = - \log(1-t)/t$ and $\operatorname{Li}_2(1) = \pi^2/6$ for the dilogarithm $\operatorname{Li}_2$ \cite[Chap.~2.6]{AndrewsAskeyRoy99} yield for the second term
\begin{align*}
\int_a^1 \frac{-\log(1-t)}{t}\dt = \left[\operatorname{Li}_2(t)\right]_{t=a}^{1} = \frac{\pi^2}{6}-\operatorname{Li}_2(a).
\end{align*}
Using the substitution $s=at$ and similar arguments as before, we conclude for the first term
\begin{align*}
  \int_a^1 \frac{\log(1-at)}{t}\dt = \int_{a^2}^{a}\frac{\log(1-s)}{s}\ds = \left[-\operatorname{Li}_2(s)\right]_{s=a^2}^{a} = \operatorname{Li}_2(a^2)-\operatorname{Li}_2(a).
\end{align*}
Combining these results shows 
\begin{align*}
  K(a) = \frac{\pi^2/6+\operatorname{Li}_2(a^2)-2\operatorname{Li}_2(a)}{1-a}\qquad\text{for all }a \in (0,1).
\end{align*}
It remains to consider the case $a \in (1,\infty)$. We first record a symmetry relation for $K$. Let $0<t<1$. From the definition of $K$ and the substitution $b=c/t$, we obtain
\begin{align*}
  K(1/t) &= \int_0^\infty \frac{\log(1+1/t+b)}{(1/t+b)(1+b)} \,\mathrm db = t\int_0^\infty \frac{\log(1+t+c)-\log(t)}{(t+c)(1+c)} \,\mathrm dc\\
  &= tK(t) - t\log(t) \int_0^\infty \frac{1}{(t+c)(1+c)} \,\mathrm dc.
\end{align*}
The remaining integral satisfies
\begin{align*}
  \int_0^\infty \frac{1}{(t+c)(1+c)} \,\mathrm dc = \frac{1}{1-t} \int_0^\infty \left(\frac1{t+c} - \frac1{1+c} \right) \,\mathrm dc  = -\frac{\log(t)}{1-t}.
\end{align*}
Therefore, we get
\begin{align*}
  K(1/t) = tK(t) + \frac{t\log^2(t)}{1-t} \qquad\text{for all }0<t<1.
\end{align*}
This identity and the substitution $a=1/t$ gives
\begin{align*}
  \int_1^\infty \frac{a^n}{(1+a)^{n+1}}K(a) \,\mathrm da & = \int_0^1 \frac{K(1/t)}{t(1+t)^{n+1}} \dt \\
  & = \int_0^1 \frac{K(t)}{(1+t)^{n+1}} \dt + \int_0^1 \frac{\log^2(t)}{(1-t)(1+t)^{n+1}} \dt.
\end{align*}
Combining the representations for $a < 1$ and $a>1$ with \eqref{eq:proofasdsadadsada} results in
\begin{align*}
  \mathcal F_n &= 6\int_0^1\left( \frac{(1+t^n) K(t)}{(1+t)^{n+1}} + \frac{\log^2(t)}{(1-t)(1+t)^{n+1}}\right) \dt.
\end{align*}
An application of the dilogarithm duplication identity $\operatorname{Li}_2(t^2) = 2\operatorname{Li}_2(t) + 2\operatorname{Li}_2(-t)$ \cite[Thm.~2.6.2]{AndrewsAskeyRoy99} concludes the proof.
\end{proof}
Using the 1D representation of $\mathcal{F}_n$, we conclude the following recurrence relation involving the constants $H_j$ defined in \eqref{eq:DefHr}.

\begin{lemma}[A one-step identity for the one-axis moments]\label{lem:one-axis-one-step-identity}
We have for all $n\in\mathbb N$ the identities
\begin{align*}
  \mathcal F_n-\frac12\mathcal F_{n-1} = 3\int_0^1 \frac{(1-t^{n-1})\Phi(t)+\log^2 t}{(1+t)^{n+1}} \dt = \frac{\pi^2}{2n} + \frac{6}{n}\sum_{j=1}^{n-1}\frac{H_j}{j}.
\end{align*}
\end{lemma}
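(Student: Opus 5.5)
The first identity follows from the integral representation in Lemma~\ref{lem:1DIntegralRepresneation}. The second will come from expanding $\Phi$ and $\log^2 t$ in powers of $t$ against the remaining rational kernel, followed by telescoping.

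\textit{Step 1 (first identity).} Subtract one half of the representation for $\mathcal F_{n-1}$ from the representation for $\mathcal F_n$. For the $\Phi$ terms, put everything over the common denominator $(1-t)(1+t)^{n+1}$:
\begin{align*}
(1+t^n) - \tfrac12(1+t)(1+t^{n-1}) = \tfrac12\big(1 - t - t^{n-1} + t^n\big) = \tfrac12 (1-t)(1-t^{n-1}).
\end{align*}
This cancels the factor $(1-t)$. For the $\log^2$ terms we similarly get $1-\tfrac12(1+t)=\tfrac12(1-t)$, which again cancels $(1-t)$. Multiplying by the prefactor $6$ gives the stated $3\int_0^1(\dots)(1+t)^{-n-1}\dt$. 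This step is pure algebra.

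\textit{Step 2 (evaluation).} Let $G_n\coloneqq 3\int_0^1 \big((1-t^{n-1})\Phi(t)+\log^2 t\big)(1+t)^{-n-1}\dt$. I will establish a recursion in $n$ for $G_n$ and avoid evaluating it directly.

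The natural tool is integration by parts using
\begin{align*}
(1+t)^{-n-1} = -\tfrac1n\,\tfrac{\mathrm d}{\mathrm dt}(1+t)^{-n}.
\end{align*}
The needed derivatives are $\Phi'(t) = -2\log(1+t)/t$ and $(\log^2 t)' = 2\log t/t$. The boundary terms behave well. At $t=1$ the factor $1-t^{n-1}$ vanishes and $\log^2 1=0$. At $t=0$ the boundary term equals $\Phi(0)\cdot 1 = \pi^2/6$ multiplied by the sign conventions, which yields exactly the contribution $\pi^2/(2n)$, since $3\cdot\tfrac1n\cdot\tfrac{\pi^2}{6} = \tfrac{\pi^2}{2n}$. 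The $\log^2 t$ boundary term is harmless because $\log^2 t\,(1+t)^{-n}\to 0$ is false at $0$, so I will instead pair it as $\log^2 t\,\big((1+t)^{-n}-1\big)$, which does vanish at $0$.

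After the integration by parts, $nG_n - \pi^2/2$ becomes $3\int_0^1 (1+t)^{-n}\,h_n(t)\dt$, where $h_n$ collects the terms $\tfrac{2\log(1+t)}{t}(1-t^{n-1})$, $(n-1)t^{n-2}\Phi(t)$, and $\tfrac{2\log t}{t}$. The claim is then equivalent to
\begin{align*}
nG_n - (n-1)G_{n-1} = \tfrac{6H_{n-1}}{n-1}\quad\text{for } n\ge 2,\qquad G_1=\tfrac{\pi^2}{2}.
\end{align*}
The base case is immediate: for $n=1$ the $\Phi$ term drops out, and
\begin{align*}
3\int_0^1 \log^2 t\,(1+t)^{-2}\dt = \tfrac{\pi^2}{2},
\end{align*}
using the known value $\int_0^1\log^2 t/(1+t)^2\dt = \pi^2/6$.

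\textit{Step 3 (the recursion, main obstacle).} For the recursion I will compute the difference $nG_n-(n-1)G_{n-1}$ after the integration by parts. The kernels $(1+t)^{-n}$ then telescope against the corresponding ones in $G_{n-1}$, and the dilogarithms should cancel. What remains are elementary integrals of the form $\int_0^1 t^{k}\log(1+t)\,t^{-1}(1+t)^{-m}\dt$ and $\int_0^1 t^{k}\log t\,(1+t)^{-m}\dt$. These produce harmonic numbers via the substitution $u=t/(1+t)$, which maps $(0,1)$ onto $(0,\tfrac12)$ and turns $(1+t)^{-m}$ into powers of $1-u$.

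Carrying out this bookkeeping cleanly, so that every $\operatorname{Li}_2$ and every $\log 2$ cancels, is the step I expect to be hardest. As a fallback I would instead prove the closed form by generating functions. I would form $\sum_n G_n x^n$, sum the geometric series in the kernel inside the integral, and compare with the known generating function
\begin{align*}
\sum_j \tfrac{H_j}{j}x^j = \operatorname{Li}_2(x)+\tfrac12\log^2(1-x).
\end{align*}
Finally, I would sanity-check the $n=2$ case numerically against the value $\pi^2/4+3$ predicted by the formula.
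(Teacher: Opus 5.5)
Your Step~1 is correct and is the paper's computation. Reducing the second identity to $G_1=\pi^2/2$ (which you verify correctly) together with $nG_n-(n-1)G_{n-1}=6H_{n-1}/(n-1)$ for $n\geq2$ is a valid reformulation.

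The gap is Step~3. The evaluation, which is the whole content of the second identity, is never carried out, and the mechanism you announce for it does not work as stated. With your antiderivative $-\frac1n(1+t)^{-n}$, integration by parts leaves the term $-3(n-1)\int_0^1 t^{n-2}(1+t)^{-n}\Phi(t)\dt$ in $nG_n$. In the difference $nG_n-(n-1)G_{n-1}$, the $\Phi$-terms therefore combine to $-3\int_0^1\Phi(t)\,w_n(t)\dt$ with
\[
w_n(t)=(n-1)\frac{t^{n-2}}{(1+t)^{n}}-(n-2)\frac{t^{n-3}}{(1+t)^{n-1}}=\frac{\mathrm d}{\mathrm dt}\Big[\Big(\frac{t}{1+t}\Big)^{n-1}-\Big(\frac{t}{1+t}\Big)^{n-2}\Big]\not\equiv0 .
\]
So the dilogarithms do not cancel; they disappear only after a second integration by parts. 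A smaller point: $3\int_0^1(1+t)^{-n}h_n\dt$ diverges if $h_n$ contains $2\log t/t$ on its own, so the pairing with $1-(1+t)^{-n}$ must be kept.

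The missing idea, which the paper uses, is to choose the antiderivative that removes $\Phi$ in one step:
\[
\frac{1-t^{n-1}}{(1+t)^{n+1}}=-\frac1n\frac{\mathrm d}{\mathrm dt}\frac{1+t^n}{(1+t)^n}.
\]
Together with $\Phi(0)=\pi^2/6$, $\Phi(1)=0$ and $\Phi'(t)=-2\log(1+t)/t$, this gives for all $n\geq1$
\[
nG_n=\frac{\pi^2}{2}-6\int_0^1\Big[\frac{1+t^n}{(1+t)^n}\log(1+t)+\Big(1-\frac{1}{(1+t)^n}\Big)\log t\Big]\frac{\dt}{t}.
\]
From here the paper substitutes $x=t/(1+t)$, compares with $S_n$, and must separately show that an auxiliary integral $J$ over $(0,1/2)$ vanishes.

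Your recursion would actually finish faster at this point. Subtracting the formula for $n-1$ gives
\[
nG_n-(n-1)G_{n-1}=6\int_0^1\frac{(1+t^{n-2})\log(1+t)-\log t}{(1+t)^n}\dt .
\]
The substitution $u=t/(1+t)$ turns this into $-6\int_0^{1/2}\big(u^{n-2}\log(1-u)+(1-u)^{n-2}\log u\big)\,\mathrm du$. Reflecting the second piece via $u\mapsto1-u$ merges everything into $-6\int_0^1u^{n-2}\log(1-u)\,\mathrm du=6H_{n-1}/(n-1)$, so no $\log2$ or $\operatorname{Li}_2(1/2)$ ever appears. As submitted, however, neither this computation nor your generating-function fallback is worked out, so the proposal proves only the first identity.
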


\begin{proof}
Let $n\in \mathbb{N}$ and set
\begin{align*}
  A_n(t)\coloneqq \frac{1+t^n}{(1-t)(1+t)^{n+1}} \qquad\text{and}\qquad  B_n(t)\coloneqq \frac{1}{(1-t)(1+t)^{n+1}}.
\end{align*}
A direct calculation gives
\begin{align*}
  A_n(t)-\frac12A_{n-1}(t) & =  \frac{1-t^{n-1}}{2(1+t)^{n+1}},\\
  B_n(t)-\frac12B_{n-1}(t) & =  \frac{1}{2(1+t)^{n+1}}.
\end{align*}
Substituting these two identities into the one-dimensional representation of $\mathcal F_n-\mathcal F_{n-1}/2$ in Lemma~\ref{lem:1DIntegralRepresneation} proves the first equality.
To verify the second identity of the lemma, we abbreviate
\begin{align*}
  I_n\coloneqq \int_0^1 \frac{(1-t^{n-1})\Phi(t)+\log^2(t)}{(1+t)^{n+1}} \dt.
\end{align*}
Since $\operatorname{Li}_2(0) = 0$ and $\operatorname{Li}_2(-1) = - \pi^2/12$ \cite[Thm.~2.6.3]{AndrewsAskeyRoy99}, we have $\Phi(0) = \pi^2/6$ and $\Phi(1) = 0$. Moreover, the identity $\tfrac{\mathrm d}{\mathrm dt} \operatorname{Li}_2(t) = - \log(1-t)/t$  \cite[Eq.~2.6.2]{AndrewsAskeyRoy99} yields $ \Phi'(t) =- 2\log(1+t)/t$ for all $t\in (0,1)$. 
Hence, an integration by parts yields
\begin{align}\label{eq:Proofasdsacvvfq}
\begin{aligned}
  \int_0^1\frac{1-t^{n-1}}{(1+t)^{n+1}} \Phi(t)\dt&  =-\frac{1}{n} \int_0^1 \left(  \frac{1+t^{n}}{(1+t)^{n}} \right)' \Phi(t)\dt\\
  &  = \frac{\pi^2}{6n} - \frac{2}{n}\int_0^1\frac{1+t^n}{(1+t)^n}\frac{\log(1+t)}{t}\dt.
\end{aligned}
\end{align}
To evaluate the second addend in $I_n$, we set $Q_n(t) \coloneqq 1 - (1+t)^{-n}$ for all $t\in (0,1)$.
Its derivative reads $Q_n'(t)=n(1+t)^{-n-1}$. Moreover, we have $Q_n(t)\log^2(t) \to 0$ as $t \to 0$ and $\log^2(1)=0$. Hence, integration by parts yields
\begin{align*}
  n\int_0^1\frac{\log^2 t}{(1+t)^{n+1}}\dt &= \int_0^1 Q_n'(t)\log^2(t)\dt = -2\int_0^1\left(1-\frac1{(1+t)^n}\right)\frac{\log(t)}{t}\dt.
\end{align*}
Combining this identity with \eqref{eq:Proofasdsacvvfq} gives
\begin{align}\label{eq:proof-one-step-In-reduction}
  nI_n-\frac{\pi^2}{6} = -2\int_0^1
  \left[\left(1-\frac1{(1+t)^n}\right)\log(t) + \frac{1+t^n}{(1+t)^n}\log(1+t) \right]\frac{\dt}{t}.
\end{align}
Set $x\coloneqq t/(1+t)$ for all $t\in (0,1)$. Then we have $t=x/(1-x)$, $\dt/t=\dx/(x(1-x))$, and $t\in(0,1)$ corresponds to $x\in(0,1/2)$. Moreover, we obtain
\begin{align*}
\begin{aligned}
  1-\frac1{(1+t)^n}&=1-(1-x)^n,& \qquad \frac{1+t^n}{(1+t)^n}&=x^n+(1-x)^n, \\
  \log t&=\log x-\log(1-x),&  \qquad   \log(1+t)&=-\log(1-x).
\end{aligned}
\end{align*}
Hence, substituting $x$ in \eqref{eq:proof-one-step-In-reduction} shows
\begin{align}\label{eq:proof-one-step-Mn}
  \frac12\left(nI_n-\frac{\pi^2}{6}\right) = \int_0^{1/2} \frac{-[1-(1-x)^n]\log x+[1+x^n]\log(1-x)}{x(1-x)} \dx.
\end{align}
We define the integral 
\begin{align*}
  S_n \coloneqq \int_0^1 \frac{1-x^{n-1}}{1-x}[-\log(1-x)]\dx.
\end{align*}
Splitting the integral at $1/2$ and substituting $x\mapsto1-x$ on $(1/2,1)$ gives
\begin{align}\label{eq:proof-one-step-Sn}
  S_n = \int_0^{1/2} \frac{-(x-x^n)\log(1-x)-[1-x-(1-x)^n]\log x}{x(1-x)} \dx.
\end{align}
Subtracting \eqref{eq:proof-one-step-Sn} from \eqref{eq:proof-one-step-Mn}, we obtain
\begin{align*}
\frac12\left(nI_n-\frac{\pi^2}{6}\right)-S_n = \int_0^{1/2} \frac{-x\log x+(1+x)\log(1-x)}{x(1-x)}\dx \eqqcolon J.
\end{align*}
By the substitution $x \mapsto (1+e^s)^{-1}$ for all $x\in(0,1/2)$ with  $\dx/(x(1-x))=-\ds$, we obtain for the integral $J$  that
\begin{align*}
  J =\int_0^\infty \left(\frac{s}{1+e^s}-\log(1+e^{-s})\right)\ds .
\end{align*}
Since $\log(1+e^{-s}) = \int_s^\infty \frac{1}{1+e^r}\dr$,
Fubini's theorem gives
\begin{align*}
  \int_0^\infty \log(1+e^{-s})\ds = \int_0^\infty \int_s^\infty \frac{1}{1+e^r}\dr\ds = \int_0^\infty \frac{r}{1+e^r}\dr.
\end{align*}
Hence, both terms cancel, resulting in $J=0$ and consequently
\begin{align*}
  \frac12\left(nI_n-\frac{\pi^2}{6}\right)=S_n.
\end{align*}
It remains to evaluate $S_n$. The identity
\begin{align*}
  \frac{1-x^{n-1}}{1-x} = \sum_{\ell=0}^{n-2}x^\ell\qquad\text{yields}\qquad
  S_n = \sum_{\ell=0}^{n-2}\int_0^1 x^\ell[-\log(1-x)]\dx.
\end{align*}
Using $-\log(1-x)=\sum_{m=1}^\infty x^m/m$ and monotone convergence, we obtain
\begin{align*}
  \int_0^1 x^\ell[-\log(1-x)]\dx
  &= \sum_{m=1}^\infty \frac1m\int_0^1 x^{m+\ell}\dx = \sum_{m=1}^\infty\frac{1}{m(m+\ell+1)} \\
  &= \frac1{\ell+1}\sum_{m=1}^\infty \left(\frac1m-\frac1{m+\ell+1}\right)
  = \frac{H_{\ell+1}}{\ell+1}.
\end{align*}
This yields the identity
\begin{align*}
  S_n = \sum_{\ell=0}^{n-2}\frac{H_{\ell+1}}{\ell+1} = \sum_{j=1}^{n-1}\frac{H_j}{j}\qquad \text{and consequently}\qquad   I_n = \frac{\pi^2}{6n} + \frac{2}{n}\sum_{j=1}^{n-1}\frac{H_j}{j}.
\end{align*}
Together with the first equality of the lemma this proves
\begin{align*}
  \mathcal F_n-\frac12\mathcal F_{n-1} & = 3I_n = \frac{\pi^2}{2n} + \frac{6}{n}\sum_{j=1}^{n-1}\frac{H_j}{j}.\qedhere
\end{align*}
\end{proof}
We use the lemma's iterative description to conclude a closed formula, involving the numbers 
\begin{align*}
\zeta(3) \coloneqq \sum_{j=1}^\infty \frac{1}{j^3},\qquad 
  H_m\coloneqq \sum_{j=1}^m\frac1j\qquad\text{and} \qquad
  H_m^{(2)}\coloneqq \sum_{j=1}^m\frac1{j^2}\qquad\text{for }m\in \mathbb{N}_0.
\end{align*}
\begin{theorem}[Formula for $\mathcal{F}_n$]\label{thm:compact-one-axis-moments}
We have for all $n\in\mathbb N_0$ the identity
\begin{align*}
  \mathcal F_n = \frac{21}{2^n}\zeta(3) + \frac{\pi^2}{2^n}\sum_{m=1}^n\frac{2^{m-1}}{m} + \frac{3}{2^n}\sum_{m=1}^n\frac{2^m}{m}\left(H_{m-1}^2+H_{m-1}^{(2)}\right).
\end{align*}
\end{theorem}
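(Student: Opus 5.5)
The plan is induction on $n$, driven by the recurrence in Lemma~\ref{lem:one-axis-one-step-identity}. That lemma gives, for $n\ge1$,
\begin{align*}
\mathcal F_n=\tfrac12\mathcal F_{n-1}+\frac{\pi^2}{2n}+\frac6n\sum_{j=1}^{n-1}\frac{H_j}{j}.
\end{align*}
Unrolling it yields
\begin{align*}
\mathcal F_n = 2^{-n}\mathcal F_0+\sum_{m=1}^n 2^{m-n}\Big(\frac{\pi^2}{2m}+\frac6m\sum_{j=1}^{m-1}\frac{H_j}{j}\Big).
\end{align*}
Equivalently, one can verify directly that the claimed right-hand side satisfies the same recurrence. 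The $\pi^2$-contribution is then exactly $2^{-n}\pi^2\sum_{m=1}^n 2^{m-1}/m$.

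For the harmonic contribution I will use the telescoping identity
\begin{align*}
2\sum_{j=1}^{m-1}\frac{H_j}{j}=H_{m-1}^2+H_{m-1}^{(2)}.
\end{align*}
It follows from $H_j^2-H_{j-1}^2=(H_j-H_{j-1})(H_j+H_{j-1})=\frac1j\big(2H_j-\frac1j\big)$ by summing over $j=1,\dots,m-1$. With this identity, the term $2^{m-n}\frac6m\sum_{j<m}H_j/j$ becomes $\frac{3}{2^n}\frac{2^m}{m}(H_{m-1}^2+H_{m-1}^{(2)})$, which matches the statement. It then remains to show the base case $\mathcal F_0=21\zeta(3)$.

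The base case is the main obstacle. I will start from Lemma~\ref{lem:1DIntegralRepresneation} with $n=0$:
\begin{align*}
\mathcal F_0=6\int_0^1\frac{2\Phi(t)+\log^2 t}{(1-t)(1+t)}\,\mathrm dt .
\end{align*}
Next I split $\frac{1}{(1-t)(1+t)}=\frac12\big(\frac1{1-t}+\frac1{1+t}\big)$. The two logarithmic pieces are classical: $\int_0^1\log^2t/(1-t)\,\mathrm dt=2\zeta(3)$ and $\int_0^1\log^2t/(1+t)\,\mathrm dt=\tfrac32\zeta(3)$, both obtained from geometric series expansion. For the $\Phi$-pieces I will integrate by parts using $\Phi'(t)=-2\log(1+t)/t$, $\Phi(0)=\pi^2/6$ and $\Phi(1)=0$.

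For the first $\Phi$-piece, the boundary term $\log(1+t)\Phi(t)$ vanishes at both ends, so
\begin{align*}
\int_0^1\frac{\Phi}{1+t}\,\mathrm dt=2\int_0^1\frac{\log^2(1+t)}{t}\,\mathrm dt=\frac{\zeta(3)}2 .
\end{align*}
For the second, $\Phi(t)=O(1-t)$ near $t=1$ kills the boundary term $-\log(1-t)\Phi(t)$, and hence
\begin{align*}
\int_0^1\frac{\Phi}{1-t}\,\mathrm dt=-2\int_0^1\frac{\log(1-t)\log(1+t)}{t}\,\mathrm dt=\frac54\zeta(3).
\end{align*}
Here I rely on the known evaluations $\int_0^1\log^2(1+t)/t\,\mathrm dt=\zeta(3)/4$ and $\int_0^1\log(1-t)\log(1+t)/t\,\mathrm dt=-\tfrac58\zeta(3)$. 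If these are not simply cited, I will derive them from $\log(1-t)\log(1+t)=\tfrac12[\log^2(1-t^2)-\log^2(1-t)-\log^2(1+t)]$ and the substitution $s=t^2$.

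Collecting the pieces gives
\begin{align*}
\mathcal F_0=3\big[2(\tfrac54+\tfrac12)+2+\tfrac32\big]\zeta(3)=21\zeta(3),
\end{align*}
which completes the induction.
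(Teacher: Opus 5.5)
Your proposal is correct and follows the paper's proof essentially step for step. It uses the same base case $\mathcal F_0=21\zeta(3)$: the one-dimensional representation, the partial-fraction split, geometric series for the $\log^2 t$ terms, and integration by parts for the $\Phi$ terms with the same four values. It then unrolls Lemma~\ref{lem:one-axis-one-step-identity} in the same way. The only difference is cosmetic: you prove $2\sum_{j=1}^{m-1}H_j/j=H_{m-1}^2+H_{m-1}^{(2)}$ by telescoping instead of citing it.

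One small caveat concerns your fallback derivation. The identity $\log(1-t)\log(1+t)=\tfrac12[\log^2(1-t^2)-\log^2(1-t)-\log^2(1+t)]$ together with $s=t^2$ only links $\int_0^1\log(1-t)\log(1+t)\,t^{-1}\,\mathrm dt$ to $\int_0^1\log^2(1+t)\,t^{-1}\,\mathrm dt$. So one of these two constants must still be imported independently; the paper does this via the alternating Euler sum $\sum_{r\geq1}(-1)^{r-1}H_r/r^2=\tfrac58\zeta(3)$.
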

\begin{proof}
We first consider the case $n=0$. The one-dimensional representation established in Lemma~\ref{lem:1DIntegralRepresneation} and the decomposition $(1-t)^{-1}(1+t)^{-1}
=\big((1-t)^{-1}+(1+t)^{-1}\big)/2$ for all $t\in (0,1)$ gives
\begin{align*}
  \mathcal F_0 &= 12\int_0^1\frac{\Phi(t)}{(1-t)(1+t)}\dt +6\int_0^1\frac{\log^2(t)}{(1-t)(1+t)}\dt \\
   & = 6 \int_0^1 \frac{\Phi(t)}{1-t} \dt + 6 \int_0^1 \frac{\Phi(t)}{1+t} \dt + 3\int_0^1 \frac{\log^2(t)}{1-t} \dt + 3\int_0^1 \frac{\log^2(t)}{1+t} \dt.
\end{align*}
By the series expansion $(1-t)^{-1} = \sum_{j=0}^\infty t^j$ for $t\in (0,1)$ we have
\begin{align}\label{eq:Proofoiiqi1}
 \int_0^1\frac{\log^2 t}{1-t}\,\mathrm dt = \sum_{j=0}^\infty \int_0^1 t^j\log^2 t\,\mathrm dt =  2\sum_{j=0}^\infty\frac1{(j+1)^3} = 2\zeta(3).
\end{align}
Similarly, using $(1+t)^{-1} = \sum_{j=0}^\infty (-1)^j t^j$ for $t\in (0,1)$ and the alternating zeta series/Dirichlet eta function \cite[Eq.~23.2.19]{AbramowitzStegun64}, we get
\begin{align}\label{eq:Proofoiiqi2}
\begin{aligned}
 \int_0^1\frac{\log^2(t)}{1+t}\dt & =  \sum_{j=0}^\infty (-1)^j \int_0^1 t^j\log^2 t\,\mathrm dt = 2\sum_{j=0}^\infty\frac{(-1)^j}{(j+1)^3} = 2\sum_{r=1}^\infty\frac{(-1)^{r-1}}{r^3} \\
  & = 2\left(1-2^{-2}\right)\zeta(3) = \frac32\zeta(3).
\end{aligned}
\end{align}
Since $\operatorname{Li}_2(-1) = - \pi^2/12$ \cite[Thm.~2.6.3]{AndrewsAskeyRoy99}, we have $\Phi(1) = 0$. Moreover, the characterization of $\operatorname{Li}_2$ as an integral in  \cite[Eq.~2.6.2]{AndrewsAskeyRoy99} yields $ \Phi'(t) = -2\log(1+t)/t$ for all $t\in (0,1)$. 
This shows that $\Phi(t)$ tends to zero as $t\nearrow 1$ with a linear rate, showing $\lim_{t\nearrow 1}\log(1-t)\Phi(t)=0$. Hence, integration by parts leads to
\begin{align*}
  \int_0^1\frac{\Phi(t)}{1-t}\,\mathrm dt & = - \int_0^1 \Phi(t) (\log(1-t))' \dt = -2\int_0^1 \frac{\log(1-t)\log(1+t)}{t} \dt.
\end{align*}
Expanding $\log(1+t)/t = \sum_{r=1}^{\infty}(-1)^{r-1}\frac{t^{r-1}}{r}$ for all $t\in (0,1)$ and using $\int_0^1 t^{r-1}\log(1-t)\,\mathrm dt = -{H_r}/{r}$, we obtain
\begin{align*}
 -2 \int_0^1 \frac{\log(1-t)\log(1+t)}{t} \dt  = 2 \sum_{r=1}^{\infty}\frac{(-1)^{r-1}H_r}{r^2}.
\end{align*}
A computation of the alternating Euler-sum can be found in \cite[Thm.~4.1]{Sitaramachandra87}, where the sum above is denoted by $H_7(2)$. The resulting value reads
\begin{align}\label{eq:Proofoiiqi3}
2  \sum_{r=1}^{\infty}\frac{(-1)^{r-1}H_r}{r^2} = \frac54\zeta(3).
\end{align}
Likewise, integrating by parts gives
\begin{align*}
  \int_0^1\frac{\Phi(t)}{1+t}\dt =   \int_0^1 \Phi(t) (\log(1+t))'\dt = 2\int_0^1\frac{\log^2(1+t)}{t}\dt.
\end{align*}
To evaluate the last integral, we use $\log(1+t) = \sum_{m=1}^{\infty}(-1)^{m-1} t^m/m$ and conclude
\begin{align*}
  \log^2(1+t) = 2\sum_{r=1}^{\infty}(-1)^r\frac{H_{r-1}}{r}t^r.
\end{align*}
Integrating the addends, using $H_{r-1}=H_r-r^{-1}$, and exploiting the identities in \eqref{eq:Proofoiiqi2} and \eqref{eq:Proofoiiqi3} yield
\begin{align*}
  \int_0^1\frac{\log^2(1+t)}{t}\dt & = 2\sum_{r=1}^{\infty}(-1)^r\frac{H_{r-1}}{r^2} = -2\sum_{r=1}^{\infty}\frac{(-1)^{r-1}H_r}{r^2} + 2\sum_{r=1}^{\infty}\frac{(-1)^{r-1}}{r^3}\\
  & = - \frac{5}{4}\zeta(3) + \frac{3}{2}\zeta(3) = \frac14 \zeta(3).
\end{align*}
Combining the identities results in 
\begin{align}\label{eq:Proofoiiqi4}
 \int_0^1\frac{\Phi(t)}{1+t}\dt = \frac12 \zeta(3).
\end{align}
The values in \eqref{eq:Proofoiiqi1}--\eqref{eq:Proofoiiqi4} yield the asserted formula for $n=0$; that is,
\begin{align*}
  \mathcal F_0  &=6\left(\frac54+\frac12\right)\zeta(3) +3\left(2+\frac32\right)\zeta(3) = 21 \zeta(3).
\end{align*}

We proceed with the case $n\in\mathbb N$. Iterating the formula in Lemma~\ref{lem:one-axis-one-step-identity} yields
\begin{align*}
  \mathcal F_n = 2^{-n}\mathcal F_0 + \sum_{m=1}^n2^{m-n} \left(\frac{\pi^2}{2m} + \frac{6}{m}\sum_{j=1}^{m-1}\frac{H_j}{j} \right).
\end{align*}
Using $\mathcal F_0=21\zeta(3)$, this becomes
\begin{align*}
  \mathcal F_n = \frac{21}{2^n}\zeta(3)  + \frac{\pi^2}{2^n}\sum_{m=1}^n\frac{2^{m-1}}{m} + \frac{6}{2^n}\sum_{m=1}^n\frac{2^m}{m}\sum_{j=1}^{m-1}\frac{H_j}{j}.
\end{align*}
To simplify the formula, we use the identity \cite[Eq.~3]{Coppo21}
\begin{align*}
  \sum_{j=1}^{m-1}\frac{H_j}{j} = \frac12\left(H_{m-1}^2+H_{m-1}^{(2)}\right).
\end{align*}
Combining these identities concludes the proof.
\end{proof}

\begin{figure}
\centering
\begin{adjustbox}{max width=\textwidth,center}
\begin{tikzpicture}[
  font=\small,
  node distance=4mm and 3mm,
  >={Latex[length=2mm]},
  line/.style={
    draw,
    ->,
    thick,
    rounded corners=1mm
  },
  merge/.style={
    draw,
    thick,
    rounded corners=1mm
  },
  startstop/.style={
    rectangle,
    rounded corners,
    draw,
    thick,
    align=center,
    inner sep=2pt,
    minimum height=8mm,
    text width=38mm
  },
  process/.style={
    rectangle,
    draw,
    thick,
    align=center,
    inner sep=2pt,
    minimum height=8mm,
    text width=47mm
  },
  decision/.style={
    diamond,
    draw,
    thick,
    align=center,
    aspect=2.4,
    inner sep=1pt,
    text width=34mm
  },
  terminal/.style={
    rectangle,
    rounded corners,
    draw,
    double,
    thick,
    align=center,
    inner sep=2pt,
    minimum height=8mm,
    text width=39mm
  },
  label/.style={
    midway,
    fill=white,
    inner sep=1pt
  }
]

\node[startstop] (input)
  {Input $\alpha\in\mathbb N_0^4$ and $\beta\in\mathbb N_0^{\mathcal E_4}$};

\node[decision, below=of input] (integrable)
  {Do the conditions in Lemma~\ref{lem:Regularity} hold for $m=0$ and $p=1$?};

\node[terminal, left=6mm of integrable] (divergent)
  {$\mathcal M_\beta^\alpha = \infty$};

\node[decision, below=of integrable] (opposite)
  {Do two opposite edges $e$ and $e^\ast$ satisfy $\beta_e,\beta_{e^\ast}\geq1$?};

\node[process, right=4mm of opposite] (opposite-reduction)
  {Apply Lemma~\ref{lem:opposite-edge-reduction}:%
  \begin{align*}
    \mathcal M_\beta^\alpha
    =\mathcal M_{\beta-\fre_e}^\alpha
    +\mathcal M_{\beta-\fre_{e^\ast}}^\alpha.
  \end{align*}%
  Repeat for every generated moment};

\node[process, below=of opposite] (classification)
  {The denominator index is reduced. Apply Lemma~\ref{lem:reduced-index} to classify its support};

\node[decision, below=of classification] (face)
  {Is $\operatorname{supp}(\beta)$ contained in the edge set of one face?};

\node[process, right=4mm of face] (face-evaluation)
  {Apply Lemma~\ref{lem:triangular-moments} and evaluate the resulting two-dimensional moment by \cite[Alg.~2]{DieningStornTscherpel24}};

\node[process, below=of face] (star)
  {Relabel the vertices such that the denominator is star-centered at $v_0$, and set $\gamma_i\coloneqq\beta_{0i}$ for $i=1,2,3$};

\node[decision, below=of star] (zero-gamma)
  {Does $\gamma_i=0$ hold for some $i\in\lbrace1,2,3\rbrace$?};

\node[process, right=4mm of zero-gamma] (star-face)
  {The denominator support is contained in one face. Apply Lemma~\ref{lem:triangular-moments} and \cite[Alg.~2]{DieningStornTscherpel24}};

\node[decision, below=of zero-gamma] (alpha-zero)
  {Does $\alpha_0=0$ hold?};

\node[decision, left=4mm of alpha-zero] (critical)
  {Are all three pair conditions in \eqref{eq:star-pair-criterion} critical?};

\node[process, left=4mm of critical] (simple-star)
  {Choose an index contained in every critical condition and apply Lemma~\ref{lem:simple-admissible-star-reduction}};

\node[process, below=of critical] (critical-star)
  {Apply Lemma~\ref{lem:critical-star-reduction}};

\node[process, below=of alpha-zero] (defect)
  {Set $\delta_j\coloneqq\alpha_j+1-\gamma_j$ and $|\delta|\coloneqq\delta_1+\delta_2+\delta_3$};

\node[decision, below=of defect] (delta-zero)
  {Does $|\delta|=0$ hold?};

\node[process, right=4mm of delta-zero] (boundary-descent)
  {Choose $k$ with $\delta_k>0$ and apply Lemma~\ref{lem:boundary-star-descent}. Repeat for every generated moment};

\node[process, below=of delta-zero] (diagonal)
  {The remaining moment has the form%
  \begin{align*}
    \mathcal C_{a,b,c}
    =\mathcal B_{(a+1,b+1,c+1)}^{(a,b,c)}
  \end{align*}};

\node[decision, below=of diagonal] (all-positive)
  {Does $a,b,c\geq1$ hold?};

\node[process, right=4mm of all-positive] (cube-reduction)
  {Apply Lemma~\ref{lem:cube-reduction-one-zero-exponent}. Repeat until at least one exponent vanishes};

\node[process, below=of all-positive] (symmetry)
  {Use the symmetry in Lemma~\ref{lem:diagonal-boundary-star-cube} and permute the indices such that $c=0$};

\node[decision, below=of symmetry] (one-axis)
  {Does $a=0$ or $b=0$ hold?};

\node[process, right=4mm of one-axis] (one-axis-value)
  {The moment equals $\mathcal F_n=\mathcal C_{n,0,0}$. Evaluate it by Theorem~\ref{thm:compact-one-axis-moments}};

\node[process, left=3mm of one-axis] (two-index)
  {Apply Lemma~\ref{lem:two-index-reduction-to-Fn}. Evaluate every generated $\mathcal L_{p,q}$ by Lemma~\ref{lem:explicit-logarithmic-moments} and repeat for the generated $\mathcal C$-moments};

\node[terminal, below=8mm of one-axis] (result)
  {Combine all generated terms to obtain $\mathcal M_\beta^\alpha$};

\draw[line]
  (input)
  --
  (integrable);

\draw[line]
  (integrable)
  --
  node[label,above] {no}
  (divergent);

\draw[line]
  (integrable)
  --
  node[label,right] {yes}
  (opposite);

\draw[line]
  (opposite)
  --
  node[label,above] {yes}
  (opposite-reduction);

\draw[line]
  (opposite-reduction.east)
  --
  ++(0.5mm,0)
  |-
  (opposite.north);

\draw[line]
  (opposite)
  --
  node[label,right] {no}
  (classification);

\draw[line]
  (classification)
  --
  (face);

\draw[line]
  (face)
  --
  node[label,above] {yes}
  (face-evaluation);

\draw[line]
  (face)
  --
  node[label,right] {no}
  (star);

\draw[line]
  (star)
  --
  (zero-gamma);

\draw[line]
  (zero-gamma)
  --
  node[label,above] {yes}
  (star-face);

\draw[line]
  (zero-gamma)
  --
  node[label,right] {no}
  (alpha-zero);

\draw[line]
  (alpha-zero)
  --
  node[label,above] {no}
  (critical);

\draw[line]
  (critical)
  --
  node[label,above] {no}
  (simple-star);

\draw[line]
  (critical)
  --
  node[label,right] {yes}
  (critical-star);

\coordinate (star-return) at ($(simple-star.west)+(-0.5mm,0)$);

\draw[line]
  (simple-star.west)
  --
  (star-return)
  |-
  (zero-gamma.west);

\draw[merge]
  (critical-star.west)
  -|
  (star-return);

\draw[line]
  (alpha-zero)
  --
  node[label,right] {yes}
  (defect);

\draw[line]
  (defect)
  --
  (delta-zero);

\draw[line]
  (delta-zero)
  --
  node[label,above] {no}
  (boundary-descent);

\draw[line]
  (boundary-descent.east)
  --
  ++(0.5mm,0)
  |-
  (delta-zero.north);

\draw[line]
  (delta-zero)
  --
  node[label,right] {yes}
  (diagonal);

\draw[line]
  (diagonal)
  --
  (all-positive);

\draw[line]
  (all-positive)
  --
  node[label,above] {yes}
  (cube-reduction);

\draw[line]
  (cube-reduction.east)
  --
  ++(0.5mm,0)
  |-
  (all-positive.north);

\draw[line]
  (all-positive)
  --
  node[label,right] {no}
  (symmetry);

\draw[line]
  (symmetry)
  --
  (one-axis);

\draw[line]
  (one-axis)
  --
  node[label,above] {yes}
  (one-axis-value);

\draw[line]
  (one-axis)
  --
  node[label,above] {no}
  (two-index);

\draw[line]
  (two-index.west)
  --
  ++(-0.5mm,0)
  |-
  (one-axis.north);

\draw[line]
  (one-axis-value.south)
  |-
  (result.east);

\draw[line]
  (face-evaluation.east)
  --
  ++(1mm,0)
  |-
  (result.east);

\draw[line]
  (star-face.east)
  --
  ++(1mm,0)
  |-
  (result.east);

\end{tikzpicture}
\end{adjustbox}
\caption{Flowchart for the evaluation of the rational moment $\mathcal M_\beta^\alpha$. The recursive reductions terminate because they successively decrease $|\beta|$, $\alpha_0$, $|\delta|$, $a+b+c$, or $a+b$.}
\label{fig:rational-moment-flowchart}
\end{figure}
\section{Numerical scheme}
Before we conclude this paper with numerical experiments in Section~\ref{subsec:NumExp}, we briefly discuss our implementation.  It is inspired by the 2D approach discussed in \cite{DieningStornTscherpel24}.
However, we use a more efficient assembly of the stiffness matrix, discussed in the following subsection.

\subsection{Assembly of the stiffness matrix}\label{subsec:efficient-stiffness-assembly}
The two-dimensional implementation in \cite[Sec.~4.2.1]{DieningStornTscherpel24} suggests a direct strategy for the assembly of the stiffness matrix: expand the local basis functions into geometry-independent rational functions, compute all products of their second barycentric derivatives, and evaluate the resulting rational moments once in an offline phase. For our three-dimensional problem this approach reads as follows. 
Let $(\psi_a)_{a=1}^N$ be a fixed family of geometry-independent rational functions such that every local basis function can be represented as
\begin{align*}
\varphi_r^T\circ F_T=\sum_{a=1}^N c_{ar}(T)\psi_a\qquad\text{with affine transformation }F_T \colon \widehat{T}\to T.
\end{align*}
For the singular Zienkiewicz element considered in Section~\ref{subbsec:SingZienk}, our representation uses $N=64$ geometry-independent rational functions for the $28$ local basis functions of $Z_3^{\operatorname{sing}}(T)$. Using the exact differentiation, multiplication, and integration formulae discussed in Lemma~\ref{lem:Multi}--\ref{lem:Differentiatoin} and Section~\ref{sec:3d-rational-moments}, a direct extension of the 2D routine then computes in an offline phase for all $a,b = 1,\dots,64$ and $i,j,k,\ell = 0,\dots,3$
\begin{align*}
\widehat{A}(a,b,i,j,k,\ell) \coloneqq \dashint_{\widehat T} \partial_{\lambda_i}\partial_{\lambda_j} \psi_a \partial_{\lambda_k}\partial_{\lambda_\ell} \psi_b \,\mathrm d\widehat x.
\end{align*}
This tensor in $\mathbb{R}^{64\times 64 \times 4\times 4 \times 4\times 4}$ is then used to compute the local stiffness matrix via Frobenius inner products with a tensor $Q_T \in \mathbb{R}^{4\times 4\times 4\times 4}$ depending on $T$. Each of these inner products requires $2\cdot 4^4 = 512$ floating-point operations, resulting in the total cost of
\begin{align}\label{eq:BadPerformance}
64^2 \cdot 512 = 2097152 \approx 2\cdot 10^6\quad \text{floating-point operations per tetrahedron}.
\end{align} 
To improve the performance in \eqref{eq:BadPerformance}, we exploit the following strategy which differs from \cite{DieningStornTscherpel24}. Let $F_T\colon\widehat T\to T = [v_0,\dots,v_3]$ be the affine map
\begin{align*}
F_T(\widehat x)\coloneqq v_0+B_T\widehat x \qquad\text{with}\qquad B_T\coloneqq\big(v_1-v_0,v_2-v_0,v_3-v_0\big).
\end{align*}
We use the three independent reference coordinates
\begin{align*}
\widehat x=(\lambda_1,\lambda_2,\lambda_3)^\top \qquad\text{with}\qquad \lambda_0=1-\lambda_1-\lambda_2-\lambda_3.
\end{align*}
For a function $\psi$ in these coordinates, the affine transformation of the Hessian reads
\begin{align*}
D_x^2(\psi\circ F_T^{-1})=B_T^{-\top}D_{\widehat x}^2\psi\,B_T^{-1}.
\end{align*}
Consequently, for two reference Hessians $H_1,H_2\in\mathbb R^{3\times3}_{\operatorname{sym}}$, we obtain
\begin{align*}
(B_T^{-\top}H_1B_T^{-1}):(B_T^{-\top}H_2B_T^{-1})=\operatorname{tr}(H_1G_TH_2G_T) \ \text{ with }\ G_T\coloneqq B_T^{-1}B_T^{-\top}\in\mathbb R^{3\times3}_{\operatorname{sym}}.
\end{align*}
The symmetric matrix $G_T$ has only six independent entries. Let $(E_\mu)_{\mu=1}^6$ be a fixed basis of $\mathbb R^{3\times3}_{\operatorname{sym}}$ and write
\begin{align*}
G_T=\sum_{\mu=1}^6 g_\mu(T)E_\mu.
\end{align*}
Inserting this representation into the Hessian product gives
\begin{align*}
&\operatorname{tr}(H_1G_TH_2G_T) = \sum_{\mu,\nu=1}^6 g_\mu(T) g_\nu(T) \operatorname{tr}(H_1E_\mu H_2E_\nu)\\
& = \sum_{\mu=1}^6 g_\mu(T)^2 \operatorname{tr}(H_1E_\mu H_2E_\mu) + \sum_{1 \leq \mu <\nu\leq 6} g_\mu(T) g_\nu(T) \operatorname{tr}(H_1E_\mu H_2E_\nu+H_1E_\nu H_2E_\mu).
\end{align*}
Thus, only $6+15 = 21$ geometry coefficients are required.
More explicitly, define
\begin{align*}
\gamma_{\mu\nu}(T)\coloneqq g_\mu(T)g_\nu(T) \qquad\text{for all }1 \leq \mu \leq \nu \leq 6.
\end{align*}
Furthermore, define the geometry-independent bilinear forms
\begin{align*}
\begin{aligned}
\mathfrak b_{\mu\mu}(H_1,H_2)&\coloneqq\operatorname{tr}(H_1E_\mu H_2E_\mu) && \quad\text{for all }\mu = 1,\dots,6,\\
\mathfrak b_{\mu\nu}(H_1,H_2)&\coloneqq\operatorname{tr}(H_1E_\mu H_2E_\nu)+\operatorname{tr}(H_1E_\nu H_2E_\mu) &&\quad \text{for all }1 \leq \mu < \nu \leq 6.
\end{aligned}
\end{align*}
This yields the identity
\begin{align*}
\operatorname{tr}(H_1G_TH_2G_T)=\sum_{1\leq\mu\leq\nu\leq6}\gamma_{\mu\nu}(T)\mathfrak b_{\mu\nu}(H_1,H_2).
\end{align*}
We thus precompute for all $1\leq\mu \leq \nu\leq 6$ the matrices $M_{\mu\nu}\in\mathbb R^{64\times 64}$ with entries
\begin{align*}
(M_{\mu\nu})_{ab}\coloneqq\dashint_{\widehat T}\mathfrak b_{\mu\nu}\big(D_{\widehat x}^2\psi_a,D_{\widehat x}^2\psi_b\big)\, \mathrm d \widehat x \qquad\text{for all } a,b = 1,\dots,64.
\end{align*}
Combining these identities yields the local stiffness matrix
\begin{align}\label{eq:efficient-local-stiffness}
A_T = |T|\, C_T^\top\Big(\sum_{1\leq\mu\leq\nu\leq 6}\gamma_{\mu\nu}(T)M_{\mu\nu}\Big)C_T
\end{align}
with
\begin{align*}
(A_T)_{r,s} = \int_T D^2 \varphi_r : D^2 \varphi_s \dx\qquad\text{for all }r,s=1,\dots,28.
\end{align*}
The computation of the inner sum thus consists of $2\cdot 21 \cdot 64^2 = 172032$ floating-point operations and thus improves significantly on \eqref{eq:BadPerformance}. Moreover, storing the matrices $M_{\mu \nu}$ for all $1\leq \mu \leq \nu \leq 6$ requires less memory.

\subsection{Numerical experiments}\label{subsec:NumExp}
We conclude this paper with two numerical experiments. 
In both experiments, we compare our routine to results obtained by numerical quadrature. 
We use the three-dimensional analogue of the numerical quadrature employed in \cite[Sec.~5.1]{DieningStornTscherpel24}. More precisely, set the reference tetrahedron
\begin{align*}
\widehat T \coloneqq \lbrace (x,y,z)\in\mathbb R^3\colon x,y,z \geq 0,\ x+y+z \leq 1\rbrace.
\end{align*}
Fubini's theorem yields for $g \colon \widehat{T}\to \mathbb{R}$
\begin{align*}
\int_{\widehat T}g(x,y,z)\,\mathrm d(x,y,z) = \int_0^1\int_0^{1-x}\int_0^{1-x-y}g(x,y,z)\dz\dy\dx.
\end{align*}
We approximate each of the three one-dimensional integrals successively by an $n$-point Gau\ss{} quadrature rule on the corresponding interval. The resulting routine uses $n^3$ quadrature points.
This approach is exact for all $g\in\mathcal P_{2n-3}(\widehat T)$ and extends via affine transformations to arbitrary tetrahedra $T \in \tria$.

\subsubsection{Quadrature error}
In our first experiment, we investigate the quadrature error for the rational functions $R \coloneqq R_\beta^\alpha$ and $R' \coloneqq R_{\beta'}^{\alpha'}$ with indices $\alpha = (0,0,0,0)$ and $\alpha' = (0,2,3,0)$ as well as 
\begin{align*}
\beta_{ij} = \begin{cases}
1 &\text{for }ij \in \lbrace 01,02,03\rbrace,\\
0&\text{else}
\end{cases}\qquad\text{and}\qquad \beta'_{ij} = \begin{cases}
3 &\text{for }ij = 02,\\
1 & \text{for }ij = 12,\\
0&\text{else}.
\end{cases}
\end{align*}
We use the quadrature rule discussed above for various numbers $n$ of 1D Gau\ss{} points to approximate the moments $\mathcal{M} \coloneqq \mathcal{M}_\beta^\alpha$ and $\mathcal{M}' \coloneqq \mathcal{M}_{\beta'}^{\alpha'}$.
The resulting errors are displayed in Figure~\ref{fig:Exp1}.
It shows that numerical quadrature for the first integrand leads to poor results. This is to be expected, as the regularity properties stated in Lemma~\ref{lem:Regularity} yield $R \in L^{p}(T)$ if and only if $p< 3/2$. Note that consequently it cannot occur in second derivatives of functions in $Z_3^{\operatorname{sing}}(T) \subset W^{2,\infty}(T)$. The second function $R' =  \partial_{\lambda_0}^2 E^2_{01}/2 \in L^\infty(T)$ however does occur. Here we observe better approximation properties: we need about $n=30$ Gau\ss{} points in 1D, leading to $30^3 = 27\, 000$ point evaluations in total, to obtain a relative error of $10^{-5}$. Increasing $n$ leads to better results, however, the speed of convergence seems to reduce.
The impact of these quadrature errors in the approximation of the biharmonic eigenvalue problem is investigated in the following.

\begin{figure}
\begin{tikzpicture}
\begin{axis}[
clip=false,
width=.8\textwidth,
height=.45\textwidth,
ymode = log,
xlabel = {Number of 1D Gau\ss{} points $n$},
cycle multi list={\nextlist MyColors2},
scale = {1},
clip = true,
legend cell align=left,
legend style={legend columns=1,legend pos= south west,font=\fontsize{7}{5}\selectfont}
]
	\addplot table [col sep=comma,x expr=\thisrow{n},y=relativeError] {Experiments/Exp1a.dat};
	\addplot table [col sep=comma,x expr=\thisrow{n},y=relativeError] {Experiments/Exp1a_b.dat};
	
	\legend{{relat.~error $\mathcal{M}$},{relat.~error $\mathcal{M}'$}};
\end{axis}
\end{tikzpicture}

\caption{Convergence history plot of the relative error for the approximation of the moments via the scheme described at the beginning of Section~\ref{subsec:NumExp}.}\label{fig:Exp1}
\end{figure}

\subsubsection{Eigenvalue problem}
In our second experiment, we investigate the effect of numerical quadrature on the approximation of the smallest biharmonic eigenvalue $\lambda_1>0$. More precisely, we consider the problem of finding $u\in H_0^2(\Omega)\setminus\lbrace0\rbrace$ with
\begin{align*}
\Delta^2u=\lambda_1u\qquad\text{in }\Omega=(0,1)^3.
\end{align*}
We discretize this problem using the singular Zienkiewicz element introduced in Section~\ref{subbsec:SingZienk}. The stiffness and mass matrices are assembled either by exact integration or by the numerical quadrature described above. Figure~\ref{fig:Exp2} displays the relative distance between the eigenvalue obtained by numerical quadrature and the corresponding exactly integrated discrete eigenvalue.
The experiment shows a clear dependence on the number $n$ of one-dimensional Gau\ss{} points. For $n=1$, the quadrature error does not decrease under mesh refinement.
For $n=2$, the quadrature-induced eigenvalue error decreases under mesh refinement, but considerably more slowly than for larger values of $n$.
For $n=3$ a deterioration becomes apparent only on the finer meshes. For larger values of $n$, the observed errors exhibit a similar decay over the considered refinement levels. This behavior may still be pre-asymptotic; in particular, the experiment does not exclude that for fixed quadrature order the error eventually reaches a quadrature-induced plateau.

\begin{figure}
\centering
\begin{tikzpicture}
\begin{axis}[
  clip=false,
  width=.8\textwidth,
  height=.5\textwidth,
  xmode=log,
  ymode=log,
  xlabel={Degrees of freedom},
  ylabel={Relative error},
  cycle multi list={\nextlist MyColorsExp2},
  scale={1},
  clip=true,
  unbounded coords=jump,
  legend cell align=left,
  legend style={
    legend columns=2,
    legend pos=south west,
    font=\fontsize{7}{5}\selectfont
  }
]

  \addplot table [
    col sep=comma,
    x=freeDOFs,
    y=relativeDistance,
    restrict expr to domain={\thisrow{nGauss}}{1:1}
  ] {Experiments/Exp2_new.dat};

  \addplot table [
    col sep=comma,
    x=freeDOFs,
    y=relativeDistance,
    restrict expr to domain={\thisrow{nGauss}}{2:2}
  ] {Experiments/Exp2_new.dat};

  \addplot table [
    col sep=comma,
    x=freeDOFs,
    y=relativeDistance,
    restrict expr to domain={\thisrow{nGauss}}{3:3}
  ] {Experiments/Exp2_new.dat};

  \addplot table [
    col sep=comma,
    x=freeDOFs,
    y=relativeDistance,
    restrict expr to domain={\thisrow{nGauss}}{4:4}
  ] {Experiments/Exp2_new.dat};

  \addplot table [
    col sep=comma,
    x=freeDOFs,
    y=relativeDistance,
    restrict expr to domain={\thisrow{nGauss}}{5:5}
  ] {Experiments/Exp2_new.dat};

  \addplot table [
    col sep=comma,
    x=freeDOFs,
    y=relativeDistance,
    restrict expr to domain={\thisrow{nGauss}}{6:6}
  ] {Experiments/Exp2_new.dat};

  \addplot table [
    col sep=comma,
    x=freeDOFs,
    y=relativeDistance,
    restrict expr to domain={\thisrow{nGauss}}{7:7}
  ] {Experiments/Exp2_new.dat};

  \addplot table [
    col sep=comma,
    x=freeDOFs,
    y=relativeDistance,
    restrict expr to domain={\thisrow{nGauss}}{8:8}
  ] {Experiments/Exp2_new.dat};

  \addplot table [
    col sep=comma,
    x=freeDOFs,
    y=relativeDistance,
    restrict expr to domain={\thisrow{nGauss}}{9:9}
  ] {Experiments/Exp2_new.dat};

  \addplot table [
    col sep=comma,
    x=freeDOFs,
    y=relativeDistance,
    restrict expr to domain={\thisrow{nGauss}}{10:10}
  ] {Experiments/Exp2_new.dat};

  \legend{
    {$n=1$},
    {$n=2$},
    {$n=3$},
    {$n=4$},
    {$n=5$},
    {$n=6$},
    {$n=7$},
    {$n=8$},
    {$n=9$},
    {$n=10$}
  };

\end{axis}
\end{tikzpicture}
\caption{Convergence history of the relative error with respect to the number of degrees of freedom for different numbers $n$ of one-dimensional Gau\ss{} points.}
\label{fig:Exp2}
\end{figure}

\subsection*{Acknowledgment}
The author used ChatGPT-5.6 Sol for language editing and presentation improvements and as an assistant in the implementation of the numerical experiments.

\bibliographystyle{amsalpha}
\bibliography{cr_conf_bib}

@book {AbramowitzStegun64,
    AUTHOR = {Abramowitz, Milton and Stegun, Irene A.},
     TITLE = {Handbook of mathematical functions with formulas, graphs, and
              mathematical tables},
    SERIES = {National Bureau of Standards Applied Mathematics Series},
    VOLUME = {No. 55},
      NOTE = {For sale by the Superintendent of Documents},
 PUBLISHER = {U. S. Government Printing Office, Washington, DC},
      YEAR = {1964},
     PAGES = {xiv+1046},
   MRCLASS = {33.00 (65.05)},
  MRNUMBER = {167642},
MRREVIEWER = {D.\ H.\ Lehmer},
}

@article{Alfeld84,
title = {A trivariate {C}lough—{T}ocher scheme for tetrahedral data},
fjournal = {Computer Aided Geometric Design},
journal = {Comput. Aided Geom. Des.},
volume = {1},
number = {2},
pages = {169-181},
year = {1984},
issn = {0167-8396},
doi = {https://doi.org/10.1016/0167-8396(84)90029-3},
url = {https://www.sciencedirect.com/science/article/pii/0167839684900293},
author = {Peter Alfeld},
}

@article{AinsworthParker24,
 author = {Ainsworth, Mark and Parker, Charles},
 title = {Two and three dimensional {{\(H^2\)}}-conforming finite element approximations without {{\(C^1\)}}-elements},
 fjournal = {Computer Methods in Applied Mechanics and Engineering},
 journal = {Comput. Methods Appl. Mech. Eng.},
 issn = {0045-7825},
 volume = {431},
 pages = {24},
 note = {Id/No 117267},
 year = {2024},
 language = {English},
 doi = {10.1016/j.cma.2024.117267},
 zbMATH = {7917744},
 Zbl = {1550.65239}
}

@book {AndrewsAskeyRoy99,
    AUTHOR = {Andrews, George E. and Askey, Richard and Roy, Ranjan},
     TITLE = {Special functions},
    SERIES = {Encyclopedia of Mathematics and its Applications},
    VOLUME = {71},
 PUBLISHER = {Cambridge University Press, Cambridge},
      YEAR = {1999},
     PAGES = {xvi+664},
      ISBN = {0-521-62321-9; 0-521-78988-5},
   MRCLASS = {33-01 (33-02)},
  MRNUMBER = {1688958},
MRREVIEWER = {Bruce\ C.\ Berndt},
       DOI = {10.1017/CBO9781107325937},
       URL = {https://doi.org/10.1017/CBO9781107325937},
}

@article{ChenHuang24,
 author = {Chen, Long and Huang, Xuehai},
 title = {Finite element de {Rham} and {Stokes} complexes in three dimensions},
 fjournal = {Mathematics of Computation},
 journal = {Math. Comput.},
 issn = {0025-5718},
 volume = {93},
 number = {345},
 pages = {55--110},
 year = {2024},
 language = {English},
 doi = {10.1090/mcom/3859},
 zbMATH = {7753424},
 Zbl = {1530.65159}
}

@Book{Ciarlet2002,
  author    = {Ciarlet, P.},
  publisher = {Society for Industrial and Applied Mathematics (SIAM), Philadelphia, PA},
  title     = {{The Finite Element Method for Elliptic Problems}},
  year      = {2002},
  isbn      = {0-89871-514-8},
  note      = {Reprint of the 1978 original [North-Holland, Amsterdam]},
  series    = {Classics in Applied Mathematics},
  volume    = {40},
  doi       = {10.1137/1.9780898719208},
  pages     = {xxviii+530},
  url       = {https://doi.org/10.1137/1.9780898719208},
}

@article {Coppo21,
    AUTHOR = {Coppo, Marc-Antoine},
     TITLE = {New identities involving {C}auchy numbers, harmonic numbers
              and zeta values},
   JOURNAL = {Results Math.},
  FJOURNAL = {Results in Mathematics},
    VOLUME = {76},
      YEAR = {2021},
    NUMBER = {4},
     PAGES = {Paper No. 189, 10},
      ISSN = {1422-6383,1420-9012},
   MRCLASS = {11B68 (05A19 11M06)},
  MRNUMBER = {4305495},
       DOI = {10.1007/s00025-021-01497-0},
       URL = {https://doi.org/10.1007/s00025-021-01497-0},
}

@article{DieningStornTscherpel24,
      title={Exact Integration for singular Zienkiewicz and Guzman-Neilan Finite Elements with Implementation}, 
      author={Lars Diening and Johannes Storn and Tabea Tscherpel},
	fjournal = {Computers & Mathematics with Applications},
	journal = {Comput. Math. Appl.},
	volume = {191},
	pages = {60-85},
	year = {2025},
	issn = {0898-1221},
	doi = {10.1016/j.camwa.2025.04.019},
	url = {https://www.scienc0edirect.com/science/article/pii/S0898122125001634},
}

@article{FalkNeilan13,
author = {Falk, Richard S. and Neilan, Michael},
title = {Stokes Complexes and the Construction of Stable Finite Elements with Pointwise Mass Conservation},
   JOURNAL = {SIAM J. Numer. Anal.},
volume = {51},
number = {2},
pages = {1308-1326},
year = {2013},
doi = {10.1137/120888132},
URL = {https://doi.org/10.1137/120888132},
eprint = {https://doi.org/10.1137/120888132},
}

@article{FengNeilan14,
 author = {Feng, Xiaobing and Neilan, Michael},
 title = {Finite element approximations of general fully nonlinear second order elliptic partial differential equations based on the vanishing moment method},
 fjournal = {Computers \& Mathematics with Applications},
 journal = {Comput. Math. Appl.},
 issn = {0898-1221},
 volume = {68},
 number = {12},
 pages = {2182--2204},
 year = {2014},
 language = {English},
 doi = {10.1016/j.camwa.2014.07.023},
 zbMATH = {6721995},
 Zbl = {1361.35061}
}

@article {FuGuzmanNeilan20,
    AUTHOR = {Fu, Guosheng and Guzm\'an, Johnny and Neilan, Michael},
     TITLE = {Exact smooth piecewise polynomial sequences on {A}lfeld
              splits},
   JOURNAL = {Math. Comp.},
  FJOURNAL = {Mathematics of Computation},
    VOLUME = {89},
      YEAR = {2020},
    NUMBER = {323},
     PAGES = {1059--1091},
      ISSN = {0025-5718,1088-6842},
   MRCLASS = {65N30},
  MRNUMBER = {4063312},
MRREVIEWER = {Fei\ Wang},
       DOI = {10.1090/mcom/3520},
       URL = {https://doi.org/10.1090/mcom/3520},
}

@article{GallistlTran24,
 author = {Gallistl, Dietmar and Tran, Ngoc Tien},
 title = {Stability and guaranteed error control of approximations to the {Monge}-{Amp{\`e}re} equation},
 fjournal = {Numerische Mathematik},
 journal = {Numer. Math.},
 issn = {0029-599X},
 volume = {156},
 number = {1},
 pages = {107--131},
 year = {2024},
 language = {English},
 doi = {10.1007/s00211-023-01385-5},
 zbMATH = {7802483},
 Zbl = {1532.35250}
}

@article{GallistlTran25,
    author = {Gallistl, Dietmar and Tran, Ngoc Tien},
    title = {Minimal residual discretization of a class of fully nonlinear elliptic PDE},
    journal = {IMA Journal of Numerical Analysis},
    pages = {draf075},
    year = {2025},
    month = {08},
    issn = {0272-4979},
    doi = {10.1093/imanum/draf075},
    url = {https://doi.org/10.1093/imanum/draf075},
    eprint = {https://academic.oup.com/imajna/advance-article-pdf/doi/10.1093/imanum/draf075/64149990/draf075.pdf},
}

@article{GuzmanLischkeNeilan22,
 author = {Guzm{\'a}n, Johnny and Lischke, Anna and Neilan, Michael},
 title = {Exact sequences on {Worsey}-{Farin} splits},
 fjournal = {Mathematics of Computation},
 journal = {Math. Comput.},
 issn = {0025-5718},
 volume = {91},
 number = {338},
 pages = {2571--2608},
 year = {2022},
 language = {English},
 doi = {10.1090/mcom/3746},
 zbMATH = {7582809},
 Zbl = {1501.65120}
}

@article {GuzmanNeilan14,
    AUTHOR = {Guzm\'an, Johnny and Neilan, Michael},
     TITLE = {Conforming and divergence-free {S}tokes elements in three
              dimensions},
   JOURNAL = {IMA J. Numer. Anal.},
  FJOURNAL = {IMA Journal of Numerical Analysis},
    VOLUME = {34},
      YEAR = {2014},
    NUMBER = {4},
     PAGES = {1489--1508},
      ISSN = {0272-4979,1464-3642},
   MRCLASS = {65N30 (65N12 76D07)},
  MRNUMBER = {3269433},
MRREVIEWER = {David\ Maltese},
       DOI = {10.1093/imanum/drt053},
       URL = {https://doi.org/10.1093/imanum/drt053},
}

@article{HuLinWu24,
 author = {Hu, Jun and Lin, Ting and Wu, Qingyu},
 title = {A construction of {{\(C^r\)}} conforming finite element spaces in any dimension},
 fjournal = {Foundations of Computational Mathematics},
 journal = {Found. Comput. Math.},
 issn = {1615-3375},
 volume = {24},
 number = {6},
 pages = {1941--1977},
 year = {2024},
 language = {English},
 doi = {10.1007/s10208-023-09627-6},
 zbMATH = {7963968},
 Zbl = {1555.65161}
}

@article{JohnLinkeMerdonNeilanRebholz17,
 author = {John, Volker and Linke, Alexander and Merdon, Christian and Neilan, Michael and Rebholz, Leo G.},
 title = {On the divergence constraint in mixed finite element methods for incompressible flows},
 fjournal = {SIAM Review},
 journal = {SIAM Rev.},
 issn = {0036-1445},
 volume = {59},
 number = {3},
 pages = {492--544},
 year = {2017},
 language = {English},
 doi = {10.1137/15M1047696},
 zbMATH = {6760418},
 Zbl = {1426.76275}
}

@article {Lasserre98,
    AUTHOR = {Lasserre, Jean B.},
     TITLE = {Integration on a convex polytope},
   JOURNAL = {Proc. Amer. Math. Soc.},
  FJOURNAL = {Proceedings of the American Mathematical Society},
    VOLUME = {126},
      YEAR = {1998},
    NUMBER = {8},
     PAGES = {2433--2441},
      ISSN = {0002-9939,1088-6826},
   MRCLASS = {65D30 (26B15 41A55)},
  MRNUMBER = {1459132},
MRREVIEWER = {C.\ Dagnino},
       DOI = {10.1090/S0002-9939-98-04454-2},
       URL = {https://doi.org/10.1090/S0002-9939-98-04454-2},
}

@article {MingXu07,
    AUTHOR = {Ming, Wang and Xu, Jinchao},
     TITLE = {Nonconforming tetrahedral finite elements for fourth order
              elliptic equations},
   JOURNAL = {Math. Comp.},
  FJOURNAL = {Mathematics of Computation},
    VOLUME = {76},
      YEAR = {2007},
    NUMBER = {257},
     PAGES = {1--18},
      ISSN = {0025-5718,1088-6842},
   MRCLASS = {65N30},
  MRNUMBER = {2261009},
MRREVIEWER = {Stephen\ W.\ Brady},
       DOI = {10.1090/S0025-5718-06-01889-8},
       URL = {https://doi.org/10.1090/S0025-5718-06-01889-8},
}

@incollection{Neilan20,
 author = {Neilan, Michael},
 title = {The {Stokes} complex: a review of exactly divergence-free finite element pairs for incompressible flows},
 booktitle = {75 years of mathematics of computation. Symposium celebrating 75 years of mathematics of computation, Institute for Computational and Experimental Research in Mathematics, ICERM, Providence, RI, USA, November 1--3, 2018},
 isbn = {978-1-4704-5163-9; 978-1-4704-5637-5},
 pages = {141--158},
 year = {2020},
 publisher = {Providence, RI: American Mathematical Society (AMS)},
 language = {English},
 doi = {10.1090/conm/754/15142},
 zbMATH = {7370293},
 Zbl = {1477.65236}
}

@article{PapanicolopulosZervosVardoulakis09,
 author = {Papanicolopulos, S.-A. and Zervos, A. and Vardoulakis, I.},
 title = {A three-dimensional {{\(C^1\)}} finite element for gradient elasticity},
 fjournal = {International Journal for Numerical Methods in Engineering},
 journal = {Int. J. Numer. Methods Eng.},
 issn = {0029-5981},
 volume = {77},
 number = {10},
 pages = {1396--1415},
 year = {2009},
 language = {English},
 doi = {10.1002/nme.2449},
 zbMATH = {5531208},
 Zbl = {1156.74382}
}

@article{SchumakerSorokinaWorsey09,
 author = {Schumaker, Larry L. and Sorokina, Tatyana and Worsey, Andrew J.},
 title = {A {{\(C^1\)}} quadratic trivariate macro-element space defined over arbitrary tetrahedral partitions},
 fjournal = {Journal of Approximation Theory},
 journal = {J. Approx. Theory},
 issn = {0021-9045},
 volume = {158},
 number = {1},
 pages = {126--142},
 year = {2009},
 language = {English},
 doi = {10.1016/j.jat.2008.04.014},
 zbMATH = {5558971},
 Zbl = {1176.41007}
}

@article {Sitaramachandra87,
    AUTHOR = {Sitaramachandra Rao, R.},
     TITLE = {A formula of {S}.~{R}amanujan},
   JOURNAL = {J. Number Theory},
  FJOURNAL = {Journal of Number Theory},
    VOLUME = {25},
      YEAR = {1987},
    NUMBER = {1},
     PAGES = {1--19},
      ISSN = {0022-314X,1096-1658},
   MRCLASS = {11M06 (40A25)},
  MRNUMBER = {871165},
MRREVIEWER = {Bruce\ C.\ Berndt},
       DOI = {10.1016/0022-314X(87)90012-6},
       URL = {https://doi.org/10.1016/0022-314X(87)90012-6},
}

@article{WorseyFarin87,
 author = {Worsey, A. J. and Farin, G.},
 title = {An {{\(n\)}}-dimensional {Clough}-{Tocher} interpolant},
 fjournal = {Constructive Approximation},
 journal = {Constr. Approx.},
 issn = {0176-4276},
 volume = {3},
 pages = {99--110},
 year = {1987},
 language = {English},
 doi = {10.1007/BF01890556},
 zbMATH = {4026035},
 Zbl = {0631.41003}
}

@article {WorseyPiper88,
    AUTHOR = {Worsey, A. J. and Piper, B.},
     TITLE = {A trivariate {P}owell-{S}abin interpolant},
   JOURNAL = {Comput. Aided Geom. Design},
  FJOURNAL = {Computer Aided Geometric Design},
    VOLUME = {5},
      YEAR = {1988},
    NUMBER = {3},
     PAGES = {177--186},
      ISSN = {0167-8396,1879-2332},
   MRCLASS = {65D05 (52-04)},
  MRNUMBER = {959603},
MRREVIEWER = {Shou\ Shan\ Jiang},
       DOI = {10.1016/0167-8396(88)90001-5},
       URL = {https://doi.org/10.1016/0167-8396(88)90001-5},
}

@article{Zenisek73,
title = {Polynomial approximation on tetrahedrons in the finite element method},
fjournal = {Journal of Approximation Theory},
journal = {J. Approx. Theory},
volume = {7},
number = {4},
pages = {334-351},
year = {1973},
issn = {0021-9045},
doi = {https://doi.org/10.1016/0021-9045(73)90036-1},
url = {https://www.sciencedirect.com/science/article/pii/0021904573900361},
author = {{\v Z}en{\'\i}{\v s}ek, Alexander},
}

@article{ZhaoSchillingerXu17,
 author = {Zhao, Ying and Schillinger, Dominik and Xu, Bai-Xiang},
 title = {Variational boundary conditions based on the {Nitsche} method for fitted and unfitted isogeometric discretizations of the mechanically coupled {Cahn}-{Hilliard} equation},
 fjournal = {Journal of Computational Physics},
 journal = {J. Comput. Phys.},
 issn = {0021-9991},
 volume = {340},
 pages = {177--199},
 year = {2017},
 language = {English},
 doi = {10.1016/j.jcp.2017.03.040},
 zbMATH = {6818920},
 Zbl = {1380.65293}
}
\end{document}